\documentclass[11pt]{amsart}

\usepackage[margin=1.15in]{geometry}
\usepackage{amsmath,amssymb,amsthm,mathtools}
\usepackage{mathrsfs}
\usepackage{enumitem}
\usepackage{xcolor}
\usepackage[colorlinks=true,citecolor=blue,linkcolor=blue,urlcolor=blue]{hyperref}
\usepackage[nameinlink,noabbrev]{cleveref}
\usepackage{mathabx}

\theoremstyle{plain}
\newtheorem{theorem}{Theorem}[section]
\newtheorem{proposition}[theorem]{Proposition}
\newtheorem{lemma}[theorem]{Lemma}
\newtheorem{corollary}[theorem]{Corollary}
\newtheorem{fact}[theorem]{Fact}

\theoremstyle{definition}
\newtheorem{definition}[theorem]{Definition}

\newtheorem{example}[theorem]{Example}

\theoremstyle{remark}
\newtheorem{remark}[theorem]{Remark}

\newcommand{\cl}{\operatorname{cl}}
\newcommand{\ci}{\operatorname{ci}}
\newcommand{\cf}{\operatorname{cf}}
\newcommand{\dcl}{\operatorname{dcl}}
\newcommand{\st}{\operatorname{st}}

\newcommand{\tame}{\operatorname{tame}}

\title{Definability of Hausdorff Limits for Lipschitz Cells in O-minimal Structures}

\author{Xiaoduo Wang}
\address{Department of Mathematics, University of Manchester}
\email{xiaoduo.wang@manchester.ac.uk}

\date{\today}

\begin{document}

\begin{abstract}
We study Hausdorff limits of definable families over arbitrary models of o-minimal expansions of real closed fields. Over the real field, van den Dries proved that Hausdorff limits of definable families are definable, giving a geometric interpretation of the Marker--Steinhorn theorem. We prove a non-Archimedean analogue for definable families which are Lipschitz cells with a fixed cell presentation and a uniform Lipschitz bound. The proof replaces compactness of closed and bounded subsets of \(\mathbb R^n\) by dense completions and long Cauchy sequences, and treats the Hausdorff distance as a metric valued in an ordered completion. We show that Hausdorff limits of such families are standard parts of external fibers over tame extensions, and use stable embeddedness of tame pairs to prove that every such limit is definable in the dense completion of the base model. We also prove a uniform version: the collection of these Hausdorff limits forms a definable family in the dense completion.
\end{abstract}

\maketitle

\section{Introduction}

O-minimality was introduced by Pillay and Steinhorn as a model-theoretic framework for tame geometry in ordered structures \cite{PillaySteinhorn1986}. An expansion of a dense linear order without endpoints is called \emph{o-minimal} if every definable subset of the universe is a finite union of points and intervals. Although this is a one-dimensional condition, it has strong geometric consequences in all dimensions, including cell decomposition, monotonicity, dimension theory, and definable choice. We refer to van den Dries \cite{vandenDries2003} for the general background.

The starting point of this paper is the Marker--Steinhorn theorem. Recall that a type \(p(x)\) over a model \(M\) is \emph{definable} if, for every formula \(\varphi(x,y)\), the set
\[
\{b\in M^{|y|}:\varphi(x,b)\in p\}
\]
is definable over \(M\). Marker and Steinhorn proved in \cite{MarkerSteinhorn1994} that, in o-minimal theories, definability of types is controlled by an order-theoretic tameness condition on elementary extensions. In the general o-minimal setting this condition is phrased in terms of Dedekind completeness inside the elementary extension generated by a realization of the type. A gap in the original proof was later identified and repaired by Andujar Guerrero \cite{AndujarGuerrero2025}.

When the theory expands the theory of real closed fields, the tameness condition in the Marker--Steinhorn theorem can be expressed using standard parts. This leads to tame pairs: one considers an elementary extension together with a predicate for the smaller model and the associated standard part map. The key feature used here is stable embeddedness of the small model in the pair. In particular, subsets of the small model definable externally are already definable in the original structure. This is the model-theoretic mechanism which later turns external representatives of Hausdorff limits into definable sets.

The literature around the Marker--Steinhorn theorem has several complementary directions. Pillay studied definability of types and pairs of o-minimal structures in \cite{Pillay1994}. Van den Dries and Lewenberg developed the theory of \(T\)-convexity and tame extensions in \cite{vandendriesLewenberg1995,vandenDries1997}. Tressl gave a valuation-theoretic account of the Marker--Steinhorn theorem in \cite{Tressl2004}, Walsberg gave a proof using definable linear orders in \cite{Walsberg2019}, and the correction mentioned above is due to Andujar Guerrero \cite{AndujarGuerrero2025}. Related work on externally definable sets and dependent pairs includes Chernikov and Simon \cite{ChernikovSimon2012}.

Van den Dries later gave a geometric interpretation of the Marker--Steinhorn theorem in terms of Hausdorff limits of definable families over the real field \cite{Lisbon2003}. Let \(A\subseteq R^{m+n}\) be a definable family, and write \(A_a\subseteq R^n\) for the fiber over \(a\in R^m\). If a sequence of fibers \(A_{a_i}\) converges in the Hausdorff metric, one can ask whether the limiting set is definable, and whether all such limits form a definable family. Over the real field, the answer is positive. From the model-theoretic point of view, a Hausdorff limit is represented as the standard part of an external fiber, and stable embeddedness of tame pairs gives definability. Lion and Speissegger gave another proof of the same definability theorem in \cite{SpeisseggerLion2004}, using a geometric construction of representations based on tangent spaces, Grassmannians, blow-ups, and integrable distributions.

The aim of this paper is to extend part of this geometric interpretation from the real field to arbitrary models of o-minimal expansions of real closed fields. Over \(\mathbb R\), closed and bounded definable sets are compact, and ordinary sequences are enough to detect Hausdorff limits. Over a non-Archimedean real closed field, neither feature remains true in the same form. We replace compactness by dense completions and long Cauchy sequences, and we replace ordinary real-valued Hausdorff distance by a Hausdorff distance taking values in an ordered completion. This is why cuts, completions, and monoid-valued metrics enter the argument.

Conceptually, the proof follows the standard-part pattern familiar from tame pairs. A limiting process is first encoded by an object in an elementary extension; the relevant bounded part is pushed back to the base by the standard part map; stable embeddedness then guarantees that no new definable subsets of the base have been introduced. The new difficulty is to make this pattern work for Hausdorff convergence over non-Archimedean models, where one has to replace ordinary sequential compactness by long Cauchy sequences and dense completion.

Throughout the paper, \(T\) is a complete o-minimal expansion of the theory of real closed fields in a language \(\mathcal L\), and \(\mathcal M\models T\). Unless otherwise stated, definable means \(\mathcal L(M)\)-definable. We use standard facts from o-minimality without further comment, including cell decomposition, monotonicity, definable choice, dimension theory, and the o-minimal curve selection lemma; see \cite{vandenDries2003}. All metric notions on powers of \(M\) are taken with respect to the sup metric unless otherwise specified.

The main contribution of this paper is the Lipschitz-cell case of the non-Archimedean Hausdorff-limit theorem. First, we develop the metric background needed to discuss Hausdorff limits over arbitrary real closed fields. This includes a completion construction for \(R\)-metric spaces using long Cauchy sequences, and a Hausdorff distance taking values in the ordered completion of the base field.

Second, we prove that if \(A\subseteq M^{m+n}\) is a bounded \(\mathcal C^p\) Lipschitz cell whose fibers have a fixed cell presentation with a uniform Lipschitz bound, then Hausdorff-Cauchy sequences of fibers have Lipschitz limit cells. This is the geometric preparation which replaces compactness of closed and bounded subsets of \(\mathbb R^n\).

Third, we show that Hausdorff limits of such fibers are standard parts of external fibers over tame extensions. We then remove the non-tame parameters from these external representatives, and use stable embeddedness of tame pairs to prove that every Hausdorff limit is definable in the dense completion of the base model; see Theorem~\ref{Hausdorff limits of Lipschitz cells are definable in dense completion}.

Finally, we prove a uniform version: the collection of all Hausdorff limits of fibers from such a family is itself a definable family in the dense completion; see Theorem~\ref{Hausdorff limits form definable family in dense completion}.

The hypothesis that the whole family is a Lipschitz cell is the point at which the present paper stops short of the full real-field theorem. A finite cell decomposition of a Hausdorff-Cauchy definable family need not itself produce Hausdorff-Cauchy families on the individual cells. Removing this hypothesis requires an additional decomposition or gluing argument. Thus the present paper should be viewed as the Lipschitz-cell case of the non-Archimedean analogue of the definability theorem for Hausdorff limits. The full analogue, as well as a corresponding extension of the Lion--Speissegger geometric proof, is left for future work.

The paper is organized as follows. We first recall the metric background needed over non-Archimedean real closed fields: cuts, completions, monoid-valued metrics, long sequences, and Hausdorff distance. We then prove the existence of Hausdorff limits for Cauchy sequences of fibers of Lipschitz cell families and represent these limits as standard parts of external fibers. Finally, we remove non-tame parameters from the external fibers and apply stable embeddedness of tame pairs to prove pointwise and uniform definability.

\section{Preliminaries}

We keep the conventions from the introduction. Thus \(T\) is a complete o-minimal expansion of the theory of real closed fields in a language \(\mathcal L\), \(\mathcal M\models T\), and definable means \(\mathcal L(M)\)-definable unless otherwise stated. We collect the model-theoretic and order-theoretic tools used later.

\subsection{Tame extensions and standard parts}

Let \(\mathcal M\preceq\mathcal N\models T\). An element \(a\in N\) is \emph{\(M\)-bounded} if there is \(b\in M^{>0}\) such that \(|a|<b\). It is \emph{\(M\)-infinitesimal} if \(|a|<b\) for every \(b\in M^{>0}\). It is \emph{infinite with respect to \(M\)} if it is not \(M\)-bounded.

The extension \(\mathcal N\) of \(\mathcal M\) is called \emph{tame} if every \(M\)-bounded element of \(N\) is infinitesimally close to an element of \(M\). Equivalently, for every \(M\)-bounded \(a\in N\), there is a unique \(a_0\in M\) such that \(a-a_0\) is \(M\)-infinitesimal. We write \(\st_M(a):=a_0\), and call it the \emph{standard part} of \(a\) over \(M\). We apply \(\st_M\) coordinatewise to tuples.

We use the language \(\mathcal L_{\mathrm{tame}}:=\mathcal L\cup\{U,\st\}\), where \(U\) is a unary predicate and \(\st\) is a unary function symbol. If \(\mathcal N\) is tame over \(\mathcal M\), then we regard \((\mathcal N,\mathcal M,\st_M)\) as an \(\mathcal L_{\mathrm{tame}}\)-structure by interpreting \(U\) as \(M\), interpreting \(\st\) as the standard part map on \(M\)-bounded elements, and assigning an arbitrary value, say \(0\), to elements infinite with respect to \(M\).

Let \(\mathcal M\preceq\mathcal N\). A set \(X\subseteq M^n\) is \emph{externally definable} if there is an \(\mathcal L(N)\)-definable set \(Y\subseteq N^n\) such that \(X=Y\cap M^n\). We say that \(\mathcal M\) is \emph{stably embedded} in \(\mathcal N\) if every externally definable subset of \(M^n\), for every \(n\), is definable in \(\mathcal M\) with parameters from \(M\).

\begin{fact}[Stable embeddedness of tame pairs]\label{Stable embeddedness of tame pairs}
Let \((\mathcal N,\mathcal M,\st_M)\) be a tame pair of models of \(T\). Then \(\mathcal M\) is stably embedded in the tame-pair structure, and the structure induced on \(M\) is precisely its original \(\mathcal L\)-structure.
\end{fact}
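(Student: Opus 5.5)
This fact is the van den Dries--Lewenberg theorem on tame pairs. I would derive it from two inputs. The first is quantifier elimination, or at least a reduction to \(\mathcal L\)-formulas with \(U\)-bounded quantifiers, for the theory \(T_{\tame}\) of tame pairs. The second is the Marker--Steinhorn theorem.

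The first step is to recall that if \(\mathcal N\) is a tame extension of \(\mathcal M\), then \(\mathcal M\) is a \(T\)-convex subring-type substructure. Concretely, the convex hull \(V\) of \(M\) in \(N\) is a \(T\)-convex valuation ring, and \(\st_M\) is the residue map \(V\to V/\mathfrak m\cong M\). Van den Dries and Lewenberg \cite{vandendriesLewenberg1995,vandenDries1997} show that the theory of such pairs is complete and model complete. Their analysis also yields the following: every \(\mathcal L_{\tame}\)-definable subset of \(U^n\), with parameters from \(N\), is of the form \(\st(Y)\) or \(Y\cap M^n\) for some \(\mathcal L(N)\)-definable \(Y\). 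I would quote this reduction from \cite{vandenDries1997} rather than reprove it. This reduces the claim to showing that traces \(Y\cap M^n\) are \(\mathcal L(M)\)-definable, together with their standard-part images, which are handled analogously.

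The second step, which is the key one, applies Marker--Steinhorn. Tameness of \(\mathcal N\) over \(\mathcal M\) says exactly that for every tuple \(b\in N^k\) the type \(\operatorname{tp}(b/M)\) is definable. The reason is that \(M\) is Dedekind complete in \(\mathcal M\langle b\rangle\): every \(M\)-bounded element has a standard part, and this passes to \(\mathcal M\langle b\rangle\subseteq N\). Marker--Steinhorn, with the correction of Andujar Guerrero \cite{AndujarGuerrero2025}, then makes each such type definable. Now write \(Y=\varphi(N,b)\). The trace \(\{a\in M^n:\varphi(a,b)\}\) is the set of \(a\) with \(\varphi(a,y)\in\operatorname{tp}(b/M)\). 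By definability of the type, this set is \(\mathcal L(M)\)-definable.

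For standard-part images \(\st(Y\cap\text{bounded})\), I would express them as the set of \(a\in M^n\) such that for every \(\varepsilon\in M^{>0}\) there exists \(y\in Y\) with \(|y-a|<\varepsilon\). This is the trace on \(M^{n}\) of the family indexed by \(\varepsilon\), so definability of types again gives an \(\mathcal L(M)\)-definable set. The same reasoning shows that the induced structure on \(M\) is exactly its \(\mathcal L\)-structure.

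The main obstacle is the first step, the elimination of the \(\st\) symbol and of \(U\)-quantifiers in \(T_{\tame}\). I would not reprove it and would cite \cite{vandenDries1997} for it. Everything after that is a formal consequence of Marker--Steinhorn.
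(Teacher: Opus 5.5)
The paper gives no argument for this Fact. It quotes it from van den Dries--Lewenberg and van den Dries \cite{vandendriesLewenberg1995,vandenDries1997}; see also \cite[Proposition~8.1]{Lisbon2003}. So your sketch is a different route. It is close in spirit to Pillay's treatment of pairs \cite{Pillay1994}: first a normal form for the pair theory (pair-definable subsets of \(M^n\) are traces \(Y\cap M^n\) or standard parts \(\st(Y)\) of \(\mathcal L(N)\)-definable sets), then Marker--Steinhorn.

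The Marker--Steinhorn half is correct. Tameness forces every cut of \(M\) realized in \(M\langle b\rangle\) to be principal or improper, so \(\operatorname{tp}(b/M)\) is definable and traces are \(\mathcal L(M)\)-definable. The set \(\st(Y)\) is then obtained from the trace of \(Z=\{(a,\varepsilon):\exists y\in Y\ \|y-a\|<\varepsilon\}\) on \(M^{n+1}\) by a universal quantifier over \(M^{>0}\).

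Your description of \(\st(Y)\) silently uses \(N\neq M\). For \(a\) in your set, \(d(a,Y)\) exists in \(N\) by definable completeness and is \(M\)-infinitesimal. To get a single \(y\in Y\) infinitely close to \(a\), you need an \(M\)-infinitesimal \(\delta>d(a,Y)\), which exists only for a proper extension. For \(N=M\) your set is \(\cl(Y)\) rather than \(Y\), though that case is trivial.

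What your route buys is a clear account of where tameness enters (only through definability of types) and why the \(\st\)-clauses cost nothing extra. What it does not buy is independence from van den Dries--Lewenberg:
\begin{enumerate}
\item Your first step carries essentially all the difficulty, namely controlling the interaction of \(U\)-quantifiers, \(N\)-quantifiers and \(\st\).
\item That normal form follows at once from the Fact itself, by taking \(Y=X^{\mathcal N}\).
\item You attribute it to \cite{vandenDries1997} only as something their analysis ``yields''. To my knowledge, the standard references state the conclusion essentially in the form of the Fact, or as stable embeddedness of the residue field of a \(T\)-convex ring, rather than as this intermediate normal form.
\end{enumerate}
Unless you can point to a precise statement that proves the normal form independently, your argument amounts to the paper's citation with Marker--Steinhorn layered on top.
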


This follows from the theory of tame pairs developed by van den Dries--Lewenberg \cite{vandendriesLewenberg1995,vandenDries1997}; see also \cite[Proposition~8.1]{Lisbon2003}. In particular, if \(X\subseteq M^n\) is definable in a tame-pair structure with parameters from the large model, then \(X\) is already \(\mathcal L(M)\)-definable.

\subsection{Cuts and monoid-valued metrics}

Throughout this subsection, \((G,+,<)\) is a divisible ordered abelian group. We identify a cut with its lower part. Thus a cut of \(G\) is a downward closed subset of the underlying ordered set \((G,<)\). When convenient, we also write a cut as a pair \(\xi=(\xi^L,\xi^R)\), where \(\xi^L\) is the lower part and \(\xi^R=G\setminus\xi^L\). The order on cuts is inclusion of lower parts.

We write \(\check G\) for the set of cuts of \(G\). The improper cuts are denoted by \(-\infty=(\varnothing,G)\) and \(+\infty=(G,\varnothing)\). For \(g\in G\), the two principal cuts determined by \(g\) are denoted by
\[
g^-:=((-\infty,g),[g,\infty)),\qquad
g^+:=((-\infty,g],(g,\infty)).
\]
A cut is \emph{principal} if it is one of these cuts, and is \emph{non-principal} otherwise.

For \(g\in G\) and \(\xi=(\xi^L,\xi^R)\in\check G\), set \(g+\xi:=(g+\xi^L,g+\xi^R)\). The \emph{invariance group} of \(\xi\) is \(G(\xi):=\{g\in G:g+\xi=\xi\}\). A cut \(\xi\) is called \emph{dense} if it is non-principal and \(G(\xi)=\{0\}\).

For cuts \(\xi,\eta\in\check G\), define the \emph{left sum} and \emph{right sum} by
\[
\xi+\eta:=(\xi^L+\eta^L,G\setminus(\xi^L+\eta^L)),
\qquad
\xi+^R\eta:=(G\setminus(\xi^R+\eta^R),\xi^R+\eta^R).
\]

\begin{proposition}[\cite{FornasieroMamino2008}]\label{Cut sums are ordered monoids}
The structures \((\check G,+,0^+,<)\) and \((\check G,+^R,0^-,<)\) are ordered commutative monoids.
\end{proposition}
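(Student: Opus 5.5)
We verify the axioms directly from the definitions, treating the left sum first and obtaining the right sum by symmetry. The approach is elementwise: every claim reduces to a statement about sums of elements of lower (or upper) parts.

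\textbf{Well-definedness and closure.} For the left sum, \(\xi^L+\eta^L\) must be downward closed. Take \(g<x+y\) with \(x\in\xi^L\) and \(y\in\eta^L\). Then \(g=(g-y)+y\), and \(g-y<x\), so \(g-y\in\xi^L\). Hence \(g\in\xi^L+\eta^L\). The empty cases are consistent: if \(\xi^L=\varnothing\), the sum is \(\varnothing\), giving \(-\infty\).

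\textbf{Commutativity and associativity.} These are inherited from \(G\), since \((\xi^L+\eta^L)+\zeta^L=\xi^L+(\eta^L+\zeta^L)\) as Minkowski sums of sets.

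\textbf{Neutral element.} The lower part of \(0^+\) is \((-\infty,0]\). We claim \(\xi^L+(-\infty,0]=\xi^L\).
\begin{itemize}
\item The inclusion \(\supseteq\) uses \(x=x+0\).
\item The inclusion \(\subseteq\) holds because \(x+y\le x\) whenever \(y\le 0\), and \(\xi^L\) is downward closed.
\end{itemize}
Note that the closed interval is essential here. With \(0^-\), the sum would give \(\{x'<x: x\in\xi^L\}\), which fails to equal \(\xi^L\) when \(\xi^L\) has a maximum.

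\textbf{Monotonicity.} If \(\xi^L\subseteq\xi'^L\), then \(\xi^L+\eta^L\subseteq\xi'^L+\eta^L\). Since the order on cuts is inclusion of lower parts, this is exactly the ordered-monoid compatibility condition. Linearity of the order on cuts is immediate, because lower parts of cuts in a linear order are comparable under inclusion.

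\textbf{Right sum.} The cleanest route is the order-reversing involution \(\xi\mapsto-\xi:=(-\xi^R,-\xi^L)\). This map swaps lower and upper parts, and it satisfies
\[
-(\xi+^R\eta)=(-\xi)+(-\eta)\qquad\text{and}\qquad -(0^-)=0^+.
\]
So \(+^R\) is the conjugate of \(+\) by an order anti-automorphism. An anti-automorphism reverses inequalities on both sides, so monotonicity is preserved. Commutativity, associativity, and the neutrality of \(0^-\) transfer directly. Equivalently, one can repeat the argument above with upper parts: upward closure of \(\xi^R+\eta^R\), the identity \(\xi^R+[0,\infty)=\xi^R\), and the fact that \(\xi^R\supseteq\xi'^R\) when \(\xi\le\xi'\).

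\textbf{Main obstacle.} The only delicate point is the improper cuts, where one summand part is empty or equals all of \(G\). Each such case needs a brief separate check. For example, \(-\infty+\eta=-\infty\) and \(+\infty+\eta=+\infty\) unless \(\eta=-\infty\). This shows that the left sum and the right sum genuinely differ on the pair \(\{+\infty,-\infty\}\). Each operation is still individually a monoid operation, and this is all the statement requires.
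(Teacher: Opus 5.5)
Your argument is correct and complete. For the left sum, the Minkowski-sum computations give downward closure, associativity, commutativity, neutrality of \(0^+\) and monotonicity. The negation \(\xi\mapsto(-\xi^R,-\xi^L)\) is an order-reversing involution with \(-(\xi+^R\eta)=(-\xi)+(-\eta)\) and \(-(0^-)=0^+\), and this transfers everything to the right sum.

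The paper gives no proof of its own and only cites Fornasiero--Mamino, so your direct verification works as a self-contained substitute. One small remark: the improper cuts need no separate case analysis. A Minkowski sum with \(\varnothing\) is \(\varnothing\), so all your inclusions hold uniformly.
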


Define an equivalence relation \(\sim\) on \(\check G\) by identifying \(g^-\sim g^+\) for every \(g\in G\), and making no other identifications. Let \(\overline G:=\check G/\sim\). We identify the common class of \(g^-\) and \(g^+\) with \(g\), and regard \(G\) as a subset of \(\overline G\). The operations \(+\) and \(+^R\) descend to \(\overline G\).

\begin{corollary}\label{Positive completion is ordered monoid}
The structures \((\overline G^{\geq0},+,0,<)\) and \((\overline G^{\geq0},+^R,0,<)\) are ordered commutative monoids.
\end{corollary}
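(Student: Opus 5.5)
The plan is to derive this from Proposition~\ref{Cut sums are ordered monoids} by checking that the monoid structure passes to the quotient $\overline G=\check G/\sim$ and then restricts to the nonnegative part. Concretely, I would carry out three steps: show that $\sim$ is a congruence for $+$ and for $+^R$ and is compatible with $<$; identify the neutral element in $\overline G$; and check that $\overline G^{\geq 0}$ is closed under both operations.

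\emph{Congruence.} Fix $\xi\in\check G$ and $g\in G$. For the left sum, $\xi^L+(-\infty,g)$ and $\xi^L+(-\infty,g]$ are both equal to $g+\xi^L$ when $\xi^L$ has no maximum. If $\xi^L$ has a maximum $h$, then $\xi=h^+$, and the two sums are $(-\infty,h+g)$ and $(-\infty,h+g]$, that is, $(h+g)^-$ and $(h+g)^+$. These are $\sim$-equivalent. The argument for $+^R$ is symmetric, using right parts and minima. Since $\sim$ only identifies pairs of principal cuts at the same point, this shows that $\xi+g^-\sim\xi+g^+$, and by commutativity the operations descend. Associativity and commutativity are then inherited from the monoids in Proposition~\ref{Cut sums are ordered monoids}. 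For the neutral element, $0^+$ is neutral for $+$ and $0^-$ is neutral for $+^R$, and both represent the class $0\in\overline G$.

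\emph{Order.} The classes of $\sim$ are convex, because $g^-$ and $g^+$ are adjacent cuts and no cut lies strictly between them. So the induced relation on $\overline G$, given by $[\xi]<[\eta]$ iff $\xi<\eta$ and $\xi\not\sim\eta$, is a well-defined linear order. Monotonicity then needs a check: if $[\xi]\le[\eta]$, choose representatives with $\xi\le\eta$, which is possible by convexity. Proposition~\ref{Cut sums are ordered monoids} gives $\xi+\zeta\le\eta+\zeta$, and passing to classes preserves $\le$. The same holds for $+^R$.

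\emph{Closure of the nonnegative part.} If $[\xi],[\eta]\ge 0$, take representatives $\xi,\eta\ge 0^+$ in the $+$ case, or $\xi,\eta\ge 0^-$ in the $+^R$ case. Monotonicity then gives $\xi+\eta\ge 0^++0^+=0^+$, and similarly $\xi+^R\eta\ge 0^-$. So $\overline G^{\geq 0}$ is a submonoid containing $0$.

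The main obstacle is the congruence step. The issue is whether $+$ could separate $g^-$ from $g^+$ onto cuts that are not identified, for example when $\xi$ is non-principal. The computation above handles this: for non-principal $\xi$ the two sums coincide, and the only case in which they differ is the principal one, where they differ exactly by $h+g$ versus $(h+g)^\pm$.
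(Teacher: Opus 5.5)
Your proposal is correct and follows the paper's route. The paper gives no separate proof: it asserts that $+$ and $+^R$ descend along $\sim$ to $\overline G$ and reads the corollary off Proposition~\ref{Cut sums are ordered monoids}. Your comparison of $\xi^L+(-\infty,g)$ with $\xi^L+(-\infty,g]$ (and its right-part analogue), the convexity of the $\sim$-classes, and the closure of $\overline G^{\geq0}$ under both sums supply exactly the verifications the paper leaves implicit.
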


\begin{lemma}\label{Dense in Nonprincipal Cuts}
For every positive element \(\xi\in\overline G\), there is \(g\in G^{>0}\) such that \(0<g<\xi\). In particular, \(G^{>0}\) is coinitial in \(\overline G^{>0}\).
\end{lemma}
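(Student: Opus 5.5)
We argue directly from the definition of $\overline G$ as $\check G/\sim$. Let $\xi\in\overline G$ with $\xi>0$, and choose a representative cut $\xi=(\xi^L,\xi^R)\in\check G$. Positivity means $\xi$ is strictly above the class of $0$, that is, above both $0^-$ and $0^+$. Since the order on cuts is inclusion of lower parts, this gives $(-\infty,0]\subsetneq\xi^L$, so there is some $h\in\xi^L$ with $h>0$. The statement itself needs no hypotheses beyond this.

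We split into two cases. If $\xi$ is not the class of $h$, set $g:=h/2$, which lies in $G$ by divisibility. Then $0<g<h$, and since $\xi^L$ is downward closed and contains $h$, the cut $g^+$ is strictly below $h^+\le\xi$. In $\overline G$ this gives $g<h\le\xi$. Here the inequality $h\le\xi$ holds because $h\in\xi^L$ forces $h^+\subseteq\xi^L$ as lower parts, and $h^-\sim h^+$. If instead $\xi=h$ in $\overline G$, the same $g=h/2$ works, since $0<h/2<h$ in $G$. So in all cases $g:=h/2$ satisfies $0<g<\xi$.

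For the second assertion, take any $\xi\in\overline G^{>0}$. The element $g$ just constructed lies in $G^{>0}$ and is strictly below $\xi$, which is exactly coinitiality of $G^{>0}$ in $\overline G^{>0}$.

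The only delicate point is checking that the order on $\overline G$ is well defined and compatible with the identification $g^-\sim g^+$. Concretely, we must confirm that $h\in\xi^L$ implies $h\le\xi$ in $\overline G$, and that strict inequality $g<h$ in $G$ is preserved in $\overline G$. For the latter, $g^+\subsetneq h^-$ as lower parts whenever $g<h$, because $h/2<g'<h$ type elements exist by density. So distinct elements of $G$ remain strictly ordered after the quotient. For the former, the lower part of $h^-$ is $(-\infty,h)$, which is contained in $\xi^L$ because $h\in\xi^L$ and $\xi^L$ is downward closed. The step $g=h/2$ uses divisibility, though any $g$ with $0<g<h$ would do.
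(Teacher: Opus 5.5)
Your proof is correct and is essentially the paper's argument: positivity of $\xi$ gives a positive element $h\in\xi^L$, and then $h\le\xi$ in $\overline G$. The only difference is how strictness is obtained. You halve to $g=h/2$, which works uniformly, so your two cases are redundant. The paper instead splits into $\xi\in G$ (immediate) and $\xi$ non-principal, where $\xi^L$ has no greatest element, so a positive $g\in\xi^L$ already lies strictly below $\xi$.
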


\begin{proof}
If \(\xi\in G\), this is immediate. Suppose that \(\xi\) is represented by a positive non-principal cut. Then \(0\in\xi^L\), and since \(\xi^L\) has no greatest element, there is \(g\in\xi^L\) with \(g>0\). Hence \(0<g<\xi\).
\end{proof}

\begin{lemma}\label{Supremum and Infimum of Subsets of Ordered Group Exist in Completion}
Let \(A\subseteq G\). Then \(\inf A\) and \(\sup A\) exist in \(\overline G\).
\end{lemma}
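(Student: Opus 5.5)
The idea is to realize \(\sup A\) and \(\inf A\) as explicit cuts, and to check the extremal properties in \(\check G\) before passing to the quotient \(\overline G\). Empty and unbounded sets will be absorbed by the improper cuts \(\pm\infty\).

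For the supremum, I take the downward closure
\[
\xi^L:=\{g\in G:\ g\le a\text{ for some }a\in A\},
\]
which is downward closed and hence a cut. It is \(-\infty\) if \(A=\varnothing\), and \(+\infty\) if \(A\) is unbounded above. The first step is to show that \(\xi\) is the least upper bound of \(A\) in \(\check G\), where each \(a\in G\) is identified with one of its principal cuts. Since \(\xi^L\) contains every \(a\in A\), we get \(a^+\le\xi\) for all \(a\in A\). Conversely, suppose a cut \(\eta\) satisfies \(a^+\le\eta\) for all \(a\in A\). Then \((-\infty,a]\subseteq\eta^L\) for each \(a\), so \(\xi^L\subseteq\eta^L\), i.e.\ \(\xi\le\eta\). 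For the infimum I argue dually: take \(\zeta^L:=\{g\in G:\ g<a\text{ for all }a\in A\}\), so that \(\zeta^R\) is the upward closure of \(A\). The same inclusion argument shows \(\zeta\le a^-\) for every \(a\in A\), and that \(\zeta\) is the largest such cut.

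The second step, which I expect to be the main obstacle, is transferring these extremal properties to \(\overline G=\check G/\sim\). The quotient only identifies \(g^-\) with \(g^+\), so the induced order on \(\overline G\) is well defined and linear: two distinct classes are comparable, and every representative of one lies below every representative of the other. The identification could only cause trouble if \(\xi\) and a candidate upper bound were collapsed. So I will argue directly in \(\overline G\). The class \([\xi]\) is an upper bound of \(A\) because \(a^-\le a^+\le\xi\) for each \(a\). Now let \(\bar\eta\) be any upper bound, with representative \(\eta\). If \(\bar\eta\) is nonprincipal, then \(\eta\ge a^+\) for all \(a\in A\), because \(\eta\ge a^-\) and \(\eta\ne a^-\) force \(a\in\eta^L\); hence \(\xi\le\eta\). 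If \(\bar\eta=g\in G\), then \(a\le g\) for all \(a\in A\), so \(\xi^L\subseteq(-\infty,g]\), giving \(\xi\le g^+\) and therefore \([\xi]\le g\). This shows \([\xi]=\sup A\) in \(\overline G\). The infimum follows by the symmetric argument, or by applying the supremum case to \(-A\) and using the order-reversing involution \(\xi\mapsto-\xi\) on cuts, which swaps \(g^+\) and \(g^-\) and hence descends to \(\overline G\).
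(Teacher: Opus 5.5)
Your proof is correct and takes essentially the same route as the paper: both realize the extremum as the cut generated by \(A\). When the infimum is not in \(G\), the paper's \((G\setminus A_+,A_+)\) is exactly your \(\zeta\). The only differences are that you treat the case where the extremum already lies in \(G\) uniformly rather than separately, and that you explicitly check the extremal property after passing to \(\overline G\), which the paper leaves implicit.
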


\begin{proof}
If \(\inf A\) exists in \(G\), then it is also the infimum of \(A\) in \(\overline G\). Otherwise, set \(A_+:=\{g\in G:\exists a\in A,\ a<g\}\). Then \((G\setminus A_+,A_+)\) is the cut determined by the lower bounds of \(A\), and is the infimum of \(A\) in \(\overline G\). The proof for \(\sup A\) is dual.
\end{proof}

\begin{corollary}\label{Supremum and Infimum of Subsets of Completion Exist}
For every \(A\subseteq\overline G\), the elements \(\inf A\) and \(\sup A\) exist in \(\overline G\).
\end{corollary}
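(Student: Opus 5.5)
The plan is to reduce the corollary to Lemma~\ref{Supremum and Infimum of Subsets of Ordered Group Exist in Completion}. We pass from a subset \(A\subseteq\overline G\) to the subset of \(G\) of all elements lying below (respectively above) some member of \(A\). The key observation is that an element of \(\overline G\) is determined by the elements of \(G\) below it. Concretely, represent each \(\xi\in\overline G\) by its lower part \(\xi^L\subseteq G\). For a principal class \(g\) we choose one representative, say \(g^+\), and then \(\xi^L=(-\infty,g]\). The order on \(\overline G\) is the order induced by inclusion of lower parts, modulo the identification \(g^-\sim g^+\).

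For the supremum, given \(A\subseteq\overline G\), I will set
\[
B:=\{g\in G:\ g\le \xi \text{ for some } \xi\in A\},
\]
that is, \(B=\bigcup_{\xi\in A}\xi^L\) with closed lower parts for principal elements. By Lemma~\ref{Supremum and Infimum of Subsets of Ordered Group Exist in Completion}, \(s:=\sup B\) exists in \(\overline G\). I then check that \(s=\sup A\).

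\emph{Upper bound.} Suppose \(\xi\in A\) and \(\xi>s\). Then \(\xi\) has a nonempty set of elements of \(G\) strictly between \(s\) and \(\xi\), or else \(\xi\) is an element of \(G\) itself. In the second case \(\xi\in B\), so \(\xi\le s\), a contradiction. In the first case, some \(g\in G\) satisfies \(s<g\le\xi\), so \(g\in B\), again a contradiction. The step "some \(g\in G\) lies in \((s,\xi]\)" is a density statement of the same kind as Lemma~\ref{Dense in Nonprincipal Cuts}. It holds because two distinct classes in \(\overline G\) have lower parts differing by at least one element of \(G\) that is not an identified endpoint.

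\emph{Least.} Let \(u\) be any upper bound of \(A\). Every element of \(B\) lies below some \(\xi\le u\), so \(u\) is an upper bound of \(B\), and hence \(s\le u\).

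The infimum is handled dually. I will use \(C:=\{g\in G: g\ge\xi\text{ for some }\xi\in A\}\) and \(\inf C\), or simply apply the order-reversing map \(g\mapsto -g\), which extends to \(\overline G\) by swapping lower and upper parts.

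The edge cases \(A=\varnothing\) and \(B=\varnothing\) are covered by the improper cuts \(\pm\infty\), which belong to \(\check G\) and hence to \(\overline G\). The one step I expect to need care is the density claim used in the upper-bound argument. The identification \(g^-\sim g^+\) ensures that distinct elements of \(\overline G\) are separated by a point of \(G\), or that one of them is a point of \(G\). I would verify this by a short case split on whether each of the two elements is principal or non-principal.
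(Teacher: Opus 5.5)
Your proof is correct, but it is organized differently from the paper's. The paper splits \(A\) into \(A_0=A\cap G\) and \(A_1=A\setminus G\). It applies Lemma~\ref{Supremum and Infimum of Subsets of Ordered Group Exist in Completion} only to \(A_0\). It computes \(\inf A_1\) directly as the cut whose upper part is \(\bigcup_{\zeta\in A_1}\zeta^R\); this uses only completeness of the inclusion order on cuts, and no choice of representatives arises because elements of \(A_1\) are non-principal. It finishes with \(\inf A=\min\{\inf A_0,\inf A_1\}\).

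You avoid the split by fixing the representative \(g^+\) for principal elements. You then collapse all of \(A\) into the single set \(B=\bigcup_{\xi\in A}\xi^L\subseteq G\) and apply the Lemma once. The price is the density step in your upper-bound argument, which the paper's treatment of \(A_1\) avoids. That step holds, and the clean way to state it is as follows. With your convention on representatives, \(s<\xi\) and \(\xi\notin G\) give \(s^L\subsetneq\xi^L\). Any \(g\in\xi^L\setminus s^L\) then satisfies \(s<g<\xi\), because the non-principal \(\xi\) has no largest element in \(\xi^L\). Your construction also shows that you could bypass both the Lemma and the density step. Indeed, \((B,G\setminus B)\) is directly the least upper bound in \(\check G\) of the chosen representatives, and the quotient map to \(\overline G\) preserves this least upper bound.
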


\begin{proof}
We prove the statement for infima; suprema are analogous. Let \(A_0:=A\cap G\) and \(A_1:=A\setminus A_0\). By Lemma~\ref{Supremum and Infimum of Subsets of Ordered Group Exist in Completion}, \(\inf A_0\) exists in \(\overline G\). For \(A_1\), define \(\xi^R:=\bigcup_{\zeta\in A_1}\zeta^R\) and \(\xi:=(G\setminus\xi^R,\xi^R)\). Then \(\xi=\inf A_1\). Hence \(\inf A=\min\{\inf A_0,\inf A_1\}\).
\end{proof}

\begin{definition}\label{Definition of monoid valued metric}
Let \((\mathcal G,+,0,<)\) be an ordered commutative monoid with least element \(0\). A function \(d:Z\times Z\to\mathcal G\) is called a \emph{\(\mathcal G\)-pseudometric} on \(Z\) if \(d(x,x)=0\), \(d(x,y)=d(y,x)\), and \(d(x,z)\leq d(x,y)+d(y,z)\) for all \(x,y,z\in Z\). If moreover \(d(x,y)=0\) implies \(x=y\), then \(d\) is called a \emph{\(\mathcal G\)-metric}. When \(G\) is an ordered abelian group, a \(G\)-pseudometric or \(G\)-metric means a \(G^{\geq0}\)-valued pseudometric or metric.
\end{definition}

\begin{definition}\label{Definition of balls and neighborhoods}
Let \((Z,\mathcal G,d)\) be a \(\mathcal G\)-pseudometric space. For \(x\in Z\) and \(\varepsilon\in\mathcal G\) with \(\varepsilon>0\), define \(B_d(x,\varepsilon):=\{y\in Z:d(x,y)<\varepsilon\}\). For \(A\subseteq Z\), define \(U_d(A,\varepsilon):=\bigcup_{x\in A}B_d(x,\varepsilon)\). When \(d\) is clear from context, we omit it from the notation.
\end{definition}

\begin{proposition}[Tressl, Proposition 3.1 in \cite{Tressl2025}]\label{Dense cuts and unique realizations}
Let \(K\) be an ordered field and let \(\xi\) be a cut of \(K\). Then \(\xi\) is dense if and only if there is an ordered field extension \(L\) of \(K\) in which \(\xi\) has a unique realization.
\end{proposition}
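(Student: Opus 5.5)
The plan is to prove the two implications separately. The direction ``unique realization \(\Rightarrow\) dense'' should be elementary. For the converse I would take \(L=K(t)\) with \(t\) realizing \(\xi\), and show that this extension has no nonzero \(K\)-infinitesimals; uniqueness of the realization then follows from density.

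\emph{(\(\Leftarrow\)).} Suppose \(a\in L\) is the unique realization of \(\xi\) in some ordered field \(L\supseteq K\). First, \(\xi\) is not principal. If \(\xi=g^+\), then \(g<a\), and \((g+a)/2\in L\) lies strictly between \(g\) and \(a\). Hence it is above \(g\) and below every element of \(\xi^R\), so it is a second realization. The case \(g^-\) is symmetric. Second, \(K(\xi)=\{0\}\). Let \(0\neq g\in K(\xi)\), and without loss of generality \(g>0\). For \(k\in\xi^L\) we have \(k<a<a+g\). For \(k\in\xi^R\), invariance gives \(k-g\in\xi^R\), so \(a<k-g\), i.e.\ \(a+g<k\). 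Thus \(a+g\neq a\) is another realization. This argument also covers the improper cuts, whose invariance group is all of \(K\). Hence \(\xi\) is dense.

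\emph{(\(\Rightarrow\)).} Let \(\xi\) be dense. I would first record the following consequence of \(K(\xi)=\{0\}\) for a non-principal cut: for every \(\varepsilon\in K^{>0}\) there are \(k\in\xi^L\) and \(k'\in\xi^R\) with \(k'-k<\varepsilon\). Suppose not. Then \(\xi^L+\varepsilon/2\subseteq\xi^L\), because a point \(k+\varepsilon/2\in\xi^R\) would give a gap smaller than \(\varepsilon\). By symmetry \(\xi^R-\varepsilon/2\subseteq\xi^R\), so \(\varepsilon/2\in K(\xi)\), a contradiction. Next, let \(L=K(t)\), ordered so that \(t\) realizes \(\xi\); a cut of \(K\) always induces such an ordering. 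If \(b\in L\) were a second realization, then \(|a-b|\) would lie below every gap \(k'-k\), and hence, by the remark just made, below every \(\varepsilon\in K^{>0}\). So it suffices to prove the following claim.

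\emph{Claim.} Every nonzero \(f(t)\in K(t)\) satisfies \(|f(t)|>\varepsilon\) for some \(\varepsilon\in K^{>0}\).

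This claim is the main obstacle. I would pass to the real closure \(R\) of \(K\). Because \(\xi\) is a dense cut, it determines a unique cut \(\xi_R\) of \(R\), and \(\xi_R\) is again dense: no element of \(R\) fills \(\xi\), by the \(\varepsilon\)-closeness property above, and \(K^{>0}\) is coinitial in \(R^{>0}\). It is enough to treat polynomials. Factor \(p=c\prod(X-r_i)\prod q_j\) over \(R\), where each \(q_j\) is an irreducible positive-definite quadratic.
\begin{itemize}
\item Each \(q_j(t)\) is at least the minimum value of \(q_j\) on \(R\), which is positive and lies in \(R\).
\item For each \(i\), the root \(r_i\) lies on one side of \(\xi_R\), at positive distance from the cut. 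By density there are cut points \(k\in\xi^L\), \(k'\in\xi^R\) with \(k'-k\) small, and \(|t-r_i|\) exceeds the distance from \(r_i\) to the nearer of these points, which is a positive element of \(R\).
\end{itemize}
Multiplying these bounds, and using coinitiality of \(K^{>0}\) in \(R^{>0}\) (positive algebraic elements are bounded below via the coefficients of their minimal polynomials), gives the required \(\varepsilon\in K^{>0}\). This proves the claim, and with it uniqueness.
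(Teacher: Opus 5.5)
Your proof of (\(\Leftarrow\)), the \(\varepsilon\)-gap lemma, and the reduction of uniqueness to ``\(L\) has no nonzero \(K\)-infinitesimals'' are all correct (the paper itself gives no proof and only cites Tressl). The gap is the assertion that a dense cut is not filled by any element of the real closure \(R\), which you justify by the \(\varepsilon\)-gap property together with coinitiality. This is false. Take \(K=\mathbb Q\) and \(\xi\) the cut of \(\sqrt2\). Since \(\mathbb Q\) is Archimedean, \(\xi\) is dense, yet \(\sqrt2\in R\) realizes it. The \(\varepsilon\)-gap property only says that elements of \(K\) approximate a realization arbitrarily well, and that is no obstruction to the realization being algebraic.

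In this situation \(\xi\) does not determine a unique cut of \(R\), and the root \(\sqrt2\) of \(X^2-2\) is not at positive distance from \(t\). Inside the real closure of \(\mathbb Q(t)\), both \(t\) and \(\sqrt2\) lie in \((q,q')\) for all rationals \(q<\sqrt2<q'\), so \(t^2-2\) is a nonzero \(\mathbb Q\)-infinitesimal. Thus the Claim itself fails, not just its proof. Indeed \(t+(t^2-2)\) is a second realization of \(\xi\) in \(\mathbb Q(t)\). More generally, whenever \(\xi\) is realized by some \(r\in R\setminus K\) with minimal polynomial \(m\) over \(K\), the element \(m(t)\) is a nonzero \(K\)-infinitesimal and \(t+m(t)\) realizes \(\xi\). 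So \(L=K(t)\) with \(t\) transcendental is the wrong extension in that case.

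The repair is a case distinction that uses only tools you already have.
\begin{itemize}
\item If \(\xi\) is realized by some \(r\in R\), take \(L=R\) (or \(K(r)\)). Two realizations differ by an element lying below every \(\varepsilon\in K^{>0}\), by your gap lemma. By coinitiality of \(K^{>0}\) in \(R^{>0}\), that element lies below every element of \(R^{>0}\), so the two realizations coincide.
\item If \(\xi\) is not realized in \(R\), then \(t\) is transcendental and your factorization argument goes through. Each root \(r_i\in R\) satisfies, say, \(r_i\le k\) for some \(k\in\xi^L\). Since \(\xi^L\) has no maximum, there is \(k''\in\xi^L\) with \(k''>k\), which gives \(t-r_i>k''-k\in K^{>0}\); the right-hand side is symmetric.
\end{itemize}
You should also state explicitly that a dense cut is proper, so \(t\) is bounded by elements of \(K\). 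This bounds \(|q(t)|\) above for denominators and is what justifies reducing to polynomials.
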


\begin{proposition}[Tressl, Corollary 3.3 in \cite{Tressl2006}]\label{Existence of dense completion}
Let \(T\) be an o-minimal expansion of the theory of real closed fields, and let \(R\models T\). Then there is a model \(S\succ R\) such that \(R\) is dense in \(S\), and if \(R'\succ R\) with \(R\) dense in \(R'\), then there is a unique elementary embedding \(R'\to S\) over \(R\). Moreover, \(S\) is uniquely determined up to a unique \(R\)-isomorphism by these properties.
\end{proposition}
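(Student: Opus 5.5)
Here I would construct the dense completion \(S\) of \(R\) as a union of a maximal tower of dense elementary extensions, and get uniqueness from the fact that a dense cut has at most one realization. The approach has three parts: existence of a maximal dense extension \(S\), the universal embedding property, and uniqueness.

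\textbf{Existence.} Consider elementary extensions \(R'\succ R\) with \(R\) dense in \(R'\). Such an \(R'\) adds no new cuts of \(R\) in an essential way, so \(|R'|\le 2^{|R|}\). We may therefore take them all inside a fixed set. Order them by elementary inclusion. A union of a chain of such extensions is again an elementary extension in which \(R\) is dense, so Zorn's lemma gives a maximal \(S\).

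\textbf{Key step: \(S\) realizes every dense cut of \(R\).} Let \(\xi\) be a dense cut of \(R\) not realized in \(S\). By density of \(R\) in \(S\), \(\xi\) determines a cut \(\xi_S\) of \(S\), and \(\xi_S\) is again dense. The next task is to show that the type over \(S\) given by \(\xi_S\) is realized in some \(S\langle c\rangle\) in which \(S\), and hence \(R\), is dense. Here o-minimality is essential. By monotonicity, every \(S\)-definable function \(f(c)\) is locally monotone and continuous around \(c\). Suppose \(f(c)\) lay in a gap between elements of \(S\) of positive width. Then, for suitable \(a<c<b\) in \(S\), the image \(f((a,b))\) would avoid an \(S\)-interval. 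Hence \(\xi_S\) would have a nonzero invariance element obtained from the derivative or a linear estimate of \(f\). This contradicts \(G(\xi_S)=\{0\}\). Making this derivative argument rigorous is the main obstacle. I would first reduce to \(f\) being \(\mathcal C^1\) with nonzero derivative near \(c\), and then use the mean value theorem in \(S\) to transfer widths. The resulting \(S\langle c\rangle\) contradicts the maximality of \(S\).

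\textbf{Universal property and uniqueness.} Given a dense extension \(R'\succ R\), each \(x\in R'\setminus R\) realizes a dense cut of \(R\) (Proposition~\ref{Dense cuts and unique realizations}). This cut has exactly one realization in \(S\). I would build the embedding \(R'\to S\) by transfinite extension of partial elementary maps. At each step, adjoin \(x\) and send it to the unique realization in \(S\) of the corresponding cut over the current image. Density ensures that the cut determines the type, again by o-minimality. Because the realization is unique, every choice is forced, and this gives uniqueness of the embedding. Finally, if \(S'\) has the same properties, the mutual embeddings \(S\to S'\to S\) compose to the identity by uniqueness. Hence \(S\cong S'\) by a unique \(R\)-isomorphism.
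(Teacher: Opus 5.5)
The paper does not prove this proposition; it quotes it from Tressl. So there is no internal argument to compare against, and your proposal has to stand as a self-contained proof. It does, in outline:
\begin{itemize}
\item a size bound (on a dense extension \(R'\), sending \(x\) to its cut over \(R\) is injective, so \(|R'|\le 2^{|R|}\));
\item a Zorn-maximal dense extension \(S\);
\item the lemma that adjoining a realization of a dense cut keeps the base dense, so \(S\) realizes every dense cut of \(R\);
\item rigidity: a cut of \(R\) has at most one realization in a dense extension, which forces every embedding over \(R\).
\end{itemize}
Compared with the bare citation, this exposes two facts the paper later uses without comment. The proof of Lemma~\ref{Dense Complete implies Cauchy Complete} needs that a dense complete model has no dense cuts; this follows from your key step together with the universal property. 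Proposition~\ref{Canonical Completion of Metric Space} needs that \(S\) is itself dense complete. That also follows from your universal property: if \(S\preceq S'\) with \(S\) dense in \(S'\), the embedding \(S'\to S\) over \(R\) is the identity on \(S\) by uniqueness. Being injective, it is then the identity on \(S'\), so \(S'=S\).

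Two points need tightening.

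\textbf{The key step.} Your ``positive width'' argument only shows that each \(f(c)\) lies within every \(\varepsilon\in S^{>0}\) of an element of \(S\). Order-density of \(S\) in \(S\langle c\rangle\) also requires that no \(f(c)\) is infinitesimally close to, but different from, some \(s\in S\). A principal cut has trivial invariance group, so your contradiction with \(G(\xi_S)=\{0\}\) does not cover this case.

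One tool handles everything, without derivatives. If an \(S\)-definable \(g\) is continuous on an interval containing \(c\), choose \(a<c<b\) in \(S\) inside it; this is possible since \(\xi_S\) is non-principal. Then \(g\) is bounded and uniformly continuous on \([a,b]\), with bounds and moduli in \(S\), and these statements transfer to \(S\langle c\rangle\).
\begin{itemize}
\item Uniform continuity together with the density of \(\xi_S\) gives the approximation.
\item Boundedness of \(1/(f-s)\) near \(c\) gives \(|f(c)-s|\geq 1/M\) for some \(M\in S^{>0}\).
\item For \(u<v\) in \(S\langle c\rangle\), take \(t\in S\) with \(0<t<v-u\) and approximate \((u+v)/2\) within \(t/2\).
\end{itemize}
If you prefer the mean value theorem, what you need is an upper bound on \(|f'|\) over \([a,b]\), not \(f'\neq0\).

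\textbf{The citation.} That each \(x\in R'\setminus R\) realizes a dense cut of \(R\) is immediate from density: for \(\delta\in R^{>0}\), pick \(r\in R\) with \(x-\delta<r<x\), so \(r+\delta>x\). It is not the content of Proposition~\ref{Dense cuts and unique realizations}, which is about ordered field extensions.
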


\begin{definition}\label{Definition of dense completion}
The model \(S\) in Proposition~\ref{Existence of dense completion} is called the \emph{dense completion} of \(R\). If \(S=R\), then \(R\) is called \emph{dense complete}.
\end{definition}

\section{Hausdorff Limits in \(R\)-Metric Spaces}

In this section we develop the metric formalism used later to study Hausdorff limits over non-Archimedean real closed fields. The main difference from the classical real case is that sequences must be indexed by a cofinality adapted to the value group of the metric, and Cauchy limits may naturally live in the dense completion of the base field. We first recall long sequences in \(R\)-metric spaces and the corresponding completion construction. We then define the Hausdorff distance for subsets of an \(R\)-metric space and record the basic properties of Hausdorff convergence needed in the sequel.

\subsection{Long sequences and metric completions}

Throughout this subsection, \(R\) is a real closed field, \((Z,R,d)\) is an \(R\)-metric space, and
\[
\kappa:=\ci(R^{>0}).
\]
Thus \(\kappa\) is the least cardinality of a coinitial subset of \(R^{>0}\). The balls \(B_d(x,\varepsilon)\), where \(x\in Z\) and \(\varepsilon\in R^{>0}\), form a basis for a topology on \(Z\). We call this topology the \emph{\(R\)-metric topology} induced by \(d\).

\begin{lemma}\label{R-metric is continuous}
The distance function \(d:Z\times Z\to R\) is continuous, where \(R\) is equipped with the order topology and \(Z\times Z\) is equipped with the product topology induced by the \(R\)-metric topology.
\end{lemma}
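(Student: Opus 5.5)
The plan is to prove continuity at an arbitrary point \((x_0,y_0)\in Z\times Z\) by checking preimages of basic open sets of the order topology on \(R\). Since \(R\) is a field, the open intervals \((d(x_0,y_0)-\varepsilon,\,d(x_0,y_0)+\varepsilon)\) with \(\varepsilon\in R^{>0}\) form a neighbourhood basis of \(d(x_0,y_0)\). So it suffices to find, for each such \(\varepsilon\), an open neighbourhood of \((x_0,y_0)\) in the product topology that is mapped into this interval. The candidate neighbourhood is the basic open box \(B_d(x_0,\varepsilon/2)\times B_d(y_0,\varepsilon/2)\). Here we use that \(\varepsilon/2\in R^{>0}\), which holds because \(R\) is a field of characteristic \(0\).

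The key step is the ``reverse triangle inequality''
\[
|d(x,y)-d(x_0,y_0)|\leq d(x,x_0)+d(y,y_0),
\]
which holds in the ordered group \(R\). The triangle inequality of Definition~\ref{Definition of monoid valued metric} gives \(d(x,y)\leq d(x,x_0)+d(x_0,y_0)+d(y_0,y)\). Applying it in the other direction gives \(d(x_0,y_0)\leq d(x_0,x)+d(x,y)+d(y,y_0)\). Since \(R\) is a group rather than just a monoid, we may subtract, and symmetry of \(d\) then yields the displayed bound. For \((x,y)\) in the box, the right-hand side is \(<\varepsilon/2+\varepsilon/2=\varepsilon\), which gives continuity at \((x_0,y_0)\).

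No step here is a genuine obstacle. The only point needing care is that the argument stays purely order-theoretic and never appeals to sequences. Sequential arguments would be inadequate in the non-Archimedean setting, since \(\kappa=\ci(R^{>0})\) may be uncountable. The \(\varepsilon\)-neighbourhood argument avoids this, because it uses only the facts that the balls form a basis and that \(R^{>0}\) is closed under halving.
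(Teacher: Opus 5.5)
Your proof is correct and is essentially the paper's argument. The paper likewise fixes \((x,y)\) with \(c=d(x,y)\in(a,b)\), takes the product of balls of radius \(m=\min\{(c-a)/2,(b-c)/2\}\), and bounds \(d(x',y')\) between \(c-2m\) and \(c+2m\) via the triangle inequality; this is your estimate, phrased as openness of \(d^{-1}((a,b))\) rather than as continuity at a point.

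Your closing aside is accurate only for ordinary \(\omega\)-sequences. The paper's \(\kappa\)-sequences would also work here, though the direct argument is simpler.
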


\begin{proof}
Let \((a,b)\subseteq R\) be an open interval, and fix \((x,y)\in d^{-1}((a,b))\). Set \(c=d(x,y)\), and choose
\[
m=\min\left\{\frac{c-a}{2},\frac{b-c}{2}\right\}>0.
\]
If \(x'\in B_d(x,m)\) and \(y'\in B_d(y,m)\), then \(d(x',y')\leq d(x',x)+d(x,y)+d(y,y')<2m+c<b\), and \(d(x',y')\geq d(x,y)-d(x,x')-d(y,y')>c-2m>a\). Thus \(B_d(x,m)\times B_d(y,m)\subseteq d^{-1}((a,b))\), so \(d^{-1}((a,b))\) is open.
\end{proof}

\begin{definition}\label{Definition of long sequence}
Let \(\lambda\) be a limit ordinal. A \emph{\(\lambda\)-sequence} in \(Z\) is a family \((x_\alpha)_{\alpha<\lambda}\). If \(\mu\) is another limit ordinal, a \emph{\(\mu\)-subsequence} of \((x_\alpha)_{\alpha<\lambda}\) is a sequence of the form \((x_{g(\beta)})_{\beta<\mu}\), where \(g:\mu\to\lambda\) is strictly increasing and cofinal in \(\lambda\).
\end{definition}

\begin{definition}\label{Definition of convergence and Cauchy long sequence}
Let \(\lambda\) be a limit ordinal. A \(\lambda\)-sequence \((x_\alpha)_{\alpha<\lambda}\) in \(Z\) converges to \(x\in Z\), written \(x_\alpha\to x\) as \(\alpha\to\lambda\), if for every \(\varepsilon\in R^{>0}\), there is \(\gamma<\lambda\) such that \(d(x_\alpha,x)<\varepsilon\) for all \(\gamma<\alpha<\lambda\). The sequence is called \emph{Cauchy} if for every \(\varepsilon\in R^{>0}\), there is \(\gamma<\lambda\) such that \(d(x_\alpha,x_\beta)<\varepsilon\) for all \(\gamma<\alpha,\beta<\lambda\).
\end{definition}

\begin{lemma}\label{Nontrivial Cauchy sequences have cofinality kappa}
Let \(\lambda\) be a limit ordinal, and let \((x_\alpha)_{\alpha<\lambda}\) be a Cauchy \(\lambda\)-sequence in \(Z\). If \(\cf(\lambda)\neq\kappa\), then \((x_\alpha)_{\alpha<\lambda}\) is eventually constant.
\end{lemma}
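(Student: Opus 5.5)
Fix a strictly decreasing coinitial sequence \((\varepsilon_\xi)_{\xi<\kappa}\) in \(R^{>0}\), which exists because \(\kappa=\ci(R^{>0})\) is a regular cardinal: a coinitial subset of a coinitial subset is coinitial, so \(\cf(\kappa)=\kappa\). For each \(\xi<\kappa\), use the Cauchy property to choose \(\gamma_\xi<\lambda\) such that \(d(x_\alpha,x_\beta)<\varepsilon_\xi\) for all \(\gamma_\xi<\alpha,\beta<\lambda\). The plan is to compare the map \(\xi\mapsto\gamma_\xi\) with the cofinality of \(\lambda\), splitting into the cases \(\cf(\lambda)<\kappa\) and \(\cf(\lambda)>\kappa\).

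\emph{Case \(\cf(\lambda)>\kappa\).} The set \(\{\gamma_\xi:\xi<\kappa\}\) has cardinality at most \(\kappa<\cf(\lambda)\), so it is bounded in \(\lambda\), say by \(\gamma<\lambda\). Then for all \(\gamma<\alpha,\beta<\lambda\) and every \(\xi<\kappa\) we have \(d(x_\alpha,x_\beta)<\varepsilon_\xi\). Since the \(\varepsilon_\xi\) are coinitial in \(R^{>0}\), this gives \(d(x_\alpha,x_\beta)=0\), hence \(x_\alpha=x_\beta\). So the sequence is constant beyond \(\gamma\).

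\emph{Case \(\cf(\lambda)<\kappa\).} Fix a cofinal strictly increasing sequence \((\alpha_i)_{i<\mu}\) in \(\lambda\), with \(\mu=\cf(\lambda)<\kappa\). Suppose the sequence is not eventually constant. For each \(i<\mu\) there are then \(\beta_i,\beta_i'>\alpha_i\) with \(x_{\beta_i}\neq x_{\beta_i'}\), and we set \(\delta_i:=d(x_{\beta_i},x_{\beta_i'})>0\). The set \(\{\delta_i:i<\mu\}\) has size less than \(\kappa\), so it is not coinitial in \(R^{>0}\). Hence there is \(\varepsilon\in R^{>0}\) with \(\varepsilon\le\delta_i\) for all \(i\). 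The Cauchy property gives \(\gamma<\lambda\) such that all pairs beyond \(\gamma\) are within distance \(\varepsilon\). Choose \(i\) with \(\alpha_i\ge\gamma\). Then \(\beta_i,\beta_i'>\gamma\), so \(\delta_i<\varepsilon\), which is a contradiction.

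The main point to check is the regularity of \(\kappa\), since the first case uses it to make the coinitial sequence indexed by \(\kappa\). Alternatively, in that case one can index the \(\varepsilon\)'s by an arbitrary coinitial set of size \(\kappa\) and use only that a family of at most \(\kappa\) ordinals below \(\lambda\) is bounded when \(\cf(\lambda)>\kappa\). Everything else is a direct counting argument.
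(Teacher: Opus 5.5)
Your proof is correct and follows the paper's argument essentially verbatim: for \(\cf(\lambda)<\kappa\) you witness non-constancy along a small cofinal set of indices by fewer than \(\kappa\) positive distances and contradict Cauchyness below a lower bound for them, and for \(\cf(\lambda)>\kappa\) you bound the \(\kappa\) many Cauchy thresholds below \(\lambda\) and conclude that the distances are \(0\). As you note yourself, the regularity of \(\kappa\) and the strictly decreasing coinitial sequence are not needed; the paper simply uses an arbitrary coinitial family \((\varepsilon_i)_{i<\kappa}\).
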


\begin{proof}
First suppose that \(\cf(\lambda)<\kappa\). Let \(I\subseteq\lambda\) be cofinal with \(|I|<\kappa\). If the sequence is not eventually constant, then for every \(\gamma\in I\) there are \(\alpha_\gamma,\beta_\gamma>\gamma\) such that \(x_{\alpha_\gamma}\neq x_{\beta_\gamma}\). Put \(r_\gamma=d(x_{\alpha_\gamma},x_{\beta_\gamma})>0\). Since \(\{r_\gamma:\gamma\in I\}\) has cardinality \(<\kappa\), it is not coinitial in \(R^{>0}\). Choose \(\varepsilon\in R^{>0}\) such that \(\varepsilon<r_\gamma\) for every \(\gamma\in I\). By Cauchyness, there is \(\gamma_0<\lambda\) such that \(d(x_\alpha,x_\beta)<\varepsilon\) for all \(\gamma_0<\alpha,\beta<\lambda\). Choose \(\gamma\in I\) with \(\gamma>\gamma_0\). Then \(r_\gamma<\varepsilon\), a contradiction.

Now suppose that \(\cf(\lambda)>\kappa\). Choose a coinitial family \((\varepsilon_i)_{i<\kappa}\) in \(R^{>0}\). For each \(i<\kappa\), choose \(\gamma_i<\lambda\) such that \(d(x_\alpha,x_\beta)<\varepsilon_i\) for all \(\gamma_i<\alpha,\beta<\lambda\). Since \(\cf(\lambda)>\kappa\), the ordinal \(\gamma^*:=\sup_{i<\kappa}\gamma_i\) is still below \(\lambda\). Hence, for all \(\gamma^*<\alpha,\beta<\lambda\), we have \(d(x_\alpha,x_\beta)<\varepsilon_i\) for every \(i<\kappa\). Since \((\varepsilon_i)_{i<\kappa}\) is coinitial in \(R^{>0}\), this implies \(d(x_\alpha,x_\beta)=0\). Thus the sequence is eventually constant.
\end{proof}

\begin{definition}\label{Definition of complete R metric space}
The \(R\)-metric space \((Z,R,d)\) is called \emph{complete} if every Cauchy \(\kappa\)-sequence in \(Z\) converges.
\end{definition}

Lemma~\ref{Nontrivial Cauchy sequences have cofinality kappa} shows that completeness is naturally tested on Cauchy \(\kappa\)-sequences. Indeed, every non-eventually constant Cauchy sequence has cofinality \(\kappa\), and passing to cofinal \(\kappa\)-subsequences preserves convergence and Cauchyness.

\begin{lemma}\label{Long Sequence and Closure}
Let \(A\subseteq Z\). Then \(x\in\cl_d(A)\) if and only if there is a \(\kappa\)-sequence \((x_\alpha)_{\alpha<\kappa}\) in \(A\) such that \(x_\alpha\to x\).
\end{lemma}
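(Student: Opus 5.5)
The plan is to prove the two implications separately. Both use only that the balls \(B_d(x,\varepsilon)\) form a basis of the \(R\)-metric topology, together with a fixed coinitial family in \(R^{>0}\) of length \(\kappa=\ci(R^{>0})\).

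For the forward direction, suppose \(x\in\cl_d(A)\). Fix a coinitial family \((\varepsilon_\alpha)_{\alpha<\kappa}\) in \(R^{>0}\). Since \(x\) lies in the closure and the balls form a basis, every ball \(B_d(x,\varepsilon_\alpha)\) meets \(A\). Using the axiom of choice, we pick \(x_\alpha\in A\cap B_d(x,\varepsilon_\alpha)\) for each \(\alpha<\kappa\).

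To check \(x_\alpha\to x\), we cannot use monotonicity of the \(\varepsilon_\alpha\) for free. A coinitial family of minimal size can be taken strictly decreasing, by a standard recursive construction on \(\kappa\) (regular, as the coinitiality of a linear order). Alternatively, we can sidestep this by replacing each \(\varepsilon_\alpha\) with a choice of \(\varepsilon'_\alpha\le\varepsilon_\alpha\) that lies below \(\varepsilon'_\beta\) for all \(\beta<\alpha\). Such an \(\varepsilon'_\alpha\) exists because fewer than \(\kappa\) elements \(\varepsilon'_\beta\) cannot be coinitial, so some element of \(R^{>0}\) lies below all of them.

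With a decreasing coinitial family, convergence is immediate. Given \(\varepsilon\in R^{>0}\), choose \(\gamma\) with \(\varepsilon_\gamma<\varepsilon\). Then for all \(\alpha>\gamma\) we have \(d(x_\alpha,x)<\varepsilon_\alpha\le\varepsilon_\gamma<\varepsilon\).

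For the converse, suppose \((x_\alpha)_{\alpha<\kappa}\) lies in \(A\) and \(x_\alpha\to x\). Any basic neighborhood of \(x\) contains a ball \(B_d(x,\varepsilon)\), since any basic ball \(B_d(y,r)\) containing \(x\) contains \(B_d(x,r-d(x,y))\) by the triangle inequality. By the definition of convergence, some \(x_\alpha\) (indeed all sufficiently late ones) lies in this ball. Hence every neighborhood of \(x\) meets \(A\), so \(x\in\cl_d(A)\).

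The only delicate point is the forward direction's need for a decreasing coinitial \(\kappa\)-sequence. The recursive thinning argument described above resolves it.
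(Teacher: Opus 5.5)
Your proposal is correct and follows the paper's proof: in the forward direction you pick points of \(A\) in the balls \(B_d(x,\varepsilon_\alpha)\) along a decreasing coinitial \(\kappa\)-sequence, and in the converse you note that every ball around \(x\) meets \(A\). The one addition is your thinning argument (taking \(\varepsilon'_\alpha\le\varepsilon_\alpha\) below all earlier \(\varepsilon'_\beta\)), which justifies the existence of a decreasing coinitial \(\kappa\)-sequence; the paper assumes this without comment.
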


\begin{proof}
Suppose first that \(x\in\cl_d(A)\). Choose a decreasing coinitial sequence \((\varepsilon_\alpha)_{\alpha<\kappa}\) in \(R^{>0}\). For each \(\alpha<\kappa\), choose \(x_\alpha\in A\cap B_d(x,\varepsilon_\alpha)\). Then \(x_\alpha\to x\). Conversely, if \(x_\alpha\to x\) with \(x_\alpha\in A\), then every ball around \(x\) intersects \(A\). Hence \(x\in\cl_d(A)\).
\end{proof}

\begin{lemma}\label{Long Sequences and Continuity}
Let \((X,R,d)\) and \((X',R,d')\) be \(R\)-metric spaces, let \(f:X\to X'\), and let \(x\in X\). Then \(f\) is continuous at \(x\) if and only if for every \(\kappa\)-sequence \((x_\alpha)_{\alpha<\kappa}\) in \(X\) with \(x_\alpha\to x\), we have \(f(x_\alpha)\to f(x)\).
\end{lemma}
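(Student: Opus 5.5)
The plan is to mirror the classical sequential characterization of continuity, using a decreasing coinitial \(\kappa\)-sequence in \(R^{>0}\) in place of \((1/n)_{n}\). Continuity at \(x\) is understood with respect to the \(R\)-metric topologies on \(X\) and \(X'\). Since the balls form a basis, this means: for every \(\varepsilon\in R^{>0}\) there is \(\delta\in R^{>0}\) with \(f(B_d(x,\delta))\subseteq B_{d'}(f(x),\varepsilon)\).

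For the forward direction, assume \(f\) is continuous at \(x\) and let \((x_\alpha)_{\alpha<\kappa}\) be a \(\kappa\)-sequence with \(x_\alpha\to x\). Fix \(\varepsilon\in R^{>0}\) and choose \(\delta\) as above. By Definition~\ref{Definition of convergence and Cauchy long sequence}, there is \(\gamma<\kappa\) such that \(d(x_\alpha,x)<\delta\) for all \(\gamma<\alpha<\kappa\). For these \(\alpha\) we then have \(d'(f(x_\alpha),f(x))<\varepsilon\). Hence \(f(x_\alpha)\to f(x)\).

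For the converse we argue by contraposition. Suppose \(f\) is not continuous at \(x\). Then there is \(\varepsilon_0\in R^{>0}\) such that for every \(\delta\in R^{>0}\) some \(y\in B_d(x,\delta)\) satisfies \(d'(f(y),f(x))\geq\varepsilon_0\).
\begin{enumerate}[label=(\roman*)]
\item As in the proof of Lemma~\ref{Long Sequence and Closure}, fix a strictly decreasing coinitial sequence \((\delta_\alpha)_{\alpha<\kappa}\) in \(R^{>0}\).
\item For each \(\alpha<\kappa\), choose \(x_\alpha\in B_d(x,\delta_\alpha)\) with \(d'(f(x_\alpha),f(x))\geq\varepsilon_0\), using the axiom of choice.
\item Given \(\varepsilon\in R^{>0}\), coinitiality gives some \(\gamma<\kappa\) with \(\delta_\gamma<\varepsilon\). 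Monotonicity then gives \(d(x_\alpha,x)<\delta_\alpha\leq\delta_\gamma<\varepsilon\) for all \(\alpha>\gamma\). So \(x_\alpha\to x\).
\item On the other hand, \(d'(f(x_\alpha),f(x))\geq\varepsilon_0\) for every \(\alpha<\kappa\), so \(f(x_\alpha)\not\to f(x)\). This contradicts the hypothesis.
\end{enumerate}

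The only step needing care is the existence of a decreasing coinitial \(\kappa\)-sequence. It is already used in Lemma~\ref{Long Sequence and Closure}, but it can be justified directly. Since \(\kappa=\ci(R^{>0})\), fix a coinitial set \(\{e_i:i<\kappa\}\). Then define \(\delta_\alpha\) by transfinite recursion as some element of \(R^{>0}\) below \(e_\alpha\) and below all \(\delta_\beta\) with \(\beta<\alpha\).

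The recursion continues through every stage \(\alpha<\kappa\). The set \(\{\delta_\beta:\beta<\alpha\}\) has cardinality at most \(|\alpha|<\kappa\), so it is not coinitial in \(R^{>0}\). Hence some positive element lies below all its members, and below \(e_\alpha\) by taking a minimum. Since \(\kappa\) is an infinite cardinal, \(|\alpha|<\kappa\) for every \(\alpha<\kappa\).

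Everything else is routine, and I expect no real obstacle.
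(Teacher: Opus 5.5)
Your proof is correct and follows essentially the same route as the paper: the forward direction pulls back a basic neighbourhood (your \(\varepsilon\)--\(\delta\) form is equivalent, since balls centred at \(x\) form a neighbourhood base), and the converse chooses bad points along a decreasing coinitial \(\kappa\)-sequence in \(R^{>0}\). Your transfinite-recursion argument for the existence of that sequence fills in a detail the paper takes for granted.
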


\begin{proof}
Suppose first that \(f\) is continuous at \(x\), and let \(x_\alpha\to x\). For every neighbourhood \(V\) of \(f(x)\), choose a neighbourhood \(U\) of \(x\) such that \(f(U)\subseteq V\). Since \(x_\alpha\to x\), eventually \(x_\alpha\in U\), and hence eventually \(f(x_\alpha)\in V\).

Conversely, suppose that \(f\) is not continuous at \(x\). Then there is a neighbourhood \(V\) of \(f(x)\) such that every neighbourhood of \(x\) contains some \(y\) with \(f(y)\notin V\). Choose a decreasing coinitial sequence \((\varepsilon_\alpha)_{\alpha<\kappa}\) in \(R^{>0}\). For each \(\alpha<\kappa\), choose \(x_\alpha\in B_d(x,\varepsilon_\alpha)\) with \(f(x_\alpha)\notin V\). Then \(x_\alpha\to x\), but \(f(x_\alpha)\) does not converge to \(f(x)\).
\end{proof}

\begin{definition}\label{Definition of Lipschitz and uniformly continuous maps}
Let \((X,R,d)\) and \((X',R,d')\) be \(R\)-metric spaces, and let \(\lambda\in R^{>0}\). A function \(f:X\to X'\) is called \emph{\(\lambda\)-Lipschitz} if \(d'(f(x),f(y))\leq\lambda d(x,y)\) for all \(x,y\in X\). The function \(f\) is called \emph{uniformly continuous} if for every \(\varepsilon\in R^{>0}\), there is \(\delta\in R^{>0}\) such that \(d(x,y)<\delta\) implies \(d'(f(x),f(y))<\varepsilon\).
\end{definition}

\begin{lemma}\label{Lipschitz Implies Continuity}
Let \(f:(X,R,d)\to(X',R,d')\) be \(\lambda\)-Lipschitz. Then \(f\) is continuous.
\end{lemma}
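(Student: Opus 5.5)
The plan is to prove uniform continuity directly from the Lipschitz bound; continuity is then immediate, since uniform continuity is a pointwise-in-\(x\) strengthening of it.

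Fix \(\varepsilon\in R^{>0}\) and put \(\delta:=\varepsilon/\lambda\). This is an element of \(R^{>0}\) because \(R\) is a field and \(\lambda>0\). For \(x,y\in X\) with \(d(x,y)<\delta\), the Lipschitz inequality gives
\[
d'(f(x),f(y))\leq\lambda d(x,y)<\lambda\delta=\varepsilon ,
\]
using that multiplication by the positive element \(\lambda\) preserves strict inequalities in the ordered field \(R\). Hence \(f\) is uniformly continuous in the sense of Definition~\ref{Definition of Lipschitz and uniformly continuous maps}.

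Continuity at a point \(x\) follows from this. The balls \(B_{d'}(f(x),\varepsilon)\) with \(\varepsilon\in R^{>0}\) form a neighbourhood base at \(f(x)\) for the \(R\)-metric topology. The displayed estimate shows \(f(B_d(x,\delta))\subseteq B_{d'}(f(x),\varepsilon)\). Thus \(f\) is continuous at every \(x\).

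As a sequential check, one can use Lemma~\ref{Long Sequences and Continuity}. If \(x_\alpha\to x\) as a \(\kappa\)-sequence, choose \(\gamma\) such that \(d(x_\alpha,x)<\varepsilon/\lambda\) for all \(\alpha>\gamma\). Then \(d'(f(x_\alpha),f(x))<\varepsilon\) for all \(\alpha>\gamma\), so \(f(x_\alpha)\to f(x)\).

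There is no real obstacle here. The only points to note are that \(\delta=\varepsilon/\lambda\) must lie in \(R\), which holds because the metric values and \(\lambda\) both live in the field \(R\), and that \(d\) and \(d'\) are both \(R\)-valued, so the \(\varepsilon\)'s range over the same \(R^{>0}\).
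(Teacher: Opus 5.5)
Your proof is correct, but its main line differs from the paper's. The paper argues purely sequentially: if \(x_\alpha\to x\), then \(d'(f(x_\alpha),f(x))\leq\lambda d(x_\alpha,x)\), hence \(f(x_\alpha)\to f(x)\), and continuity follows from Lemma~\ref{Long Sequences and Continuity}. That is exactly your closing ``sequential check''.

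Your primary argument instead takes \(\delta=\varepsilon/\lambda\) and gets \(f(B_d(x,\delta))\subseteq B_{d'}(f(x),\varepsilon)\) directly from the ball basis of the \(R\)-metric topology. This route has two advantages. It is self-contained, since it avoids the harder direction of Lemma~\ref{Long Sequences and Continuity}, the one that needs a decreasing coinitial \(\kappa\)-sequence in \(R^{>0}\). It also gives the stronger conclusion that \(f\) is uniformly continuous. That is precisely the hypothesis of Proposition~\ref{Uniform Continuity Preserves Hausdorff Limits}, so it shows at once that Lipschitz maps, such as the coordinate projections used later, preserve Hausdorff limits.

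The paper's route is a one-liner once the long-sequence lemma is available, and it fits the paper's programme of replacing ordinary sequences by \(\kappa\)-sequences. As written, however, it establishes only continuity, not uniform continuity.
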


\begin{proof}
If \(x_\alpha\to x\), then \(d'(f(x_\alpha),f(x))\leq\lambda d(x_\alpha,x)\), so \(f(x_\alpha)\to f(x)\). The result follows from Lemma~\ref{Long Sequences and Continuity}.
\end{proof}

\begin{definition}\label{Definition of isometry}
Let \((X,R,d)\) and \((X',R,d')\) be \(R\)-metric spaces. A map \(f:X\to X'\) is called an \emph{isometry} if \(d'(f(x),f(y))=d(x,y)\) for all \(x,y\in X\). It is called an \emph{isometric embedding} if it is an isometry onto its image.
\end{definition}

We now begin the completion construction for an \(R\)-metric space after passing from \(R\) to its dense completion. Let \((X,R,d)\) be an \(R\)-metric space, and let \(S\) be the dense completion of \(R\). Since \(R\) is dense in \(S\), the same cardinal \(\kappa=\ci(R^{>0})=\ci(S^{>0})\) is sufficient for testing convergence and Cauchyness after distances are regarded as \(S\)-valued.

Let
\[
\mathcal C_\kappa(X):=
\{(a_i)_{i<\kappa}:(a_i)_{i<\kappa}\text{ is a Cauchy }\kappa\text{-sequence in }X\}.
\]
Define an equivalence relation \(\sim\) on \(\mathcal C_\kappa(X)\) by declaring \((a_i)_{i<\kappa}\sim(b_i)_{i<\kappa}\) if, for every \(\varepsilon\in R^{>0}\), there is \(\alpha<\kappa\) such that \(d(a_i,b_i)<\varepsilon\) for all \(\alpha<i<\kappa\). Let
\[
\overline X:=\mathcal C_\kappa(X)/\sim.
\]

\begin{lemma}\label{Dense Complete implies Cauchy Complete}
Let \(R\) be dense complete. Then the \(R\)-metric space \((R,R,d)\), where \(d(x,y)=|x-y|\), is complete.
\end{lemma}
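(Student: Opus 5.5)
We need to show that if \(R\) is dense complete, then every Cauchy \(\kappa\)-sequence \((a_i)_{i<\kappa}\) in \(R\) converges in \(R\). The plan is to produce a candidate limit as a cut of \(R\), show that this cut is either realized in \(R\) or dense, and in the dense case use Proposition~\ref{Dense cuts and unique realizations} together with the universal property of Proposition~\ref{Existence of dense completion} to get a contradiction with \(S=R\).

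\textbf{Step 1: the candidate cut.} Given the Cauchy sequence, define the cut \(\xi\) with lower part
\[
\xi^L:=\{r\in R:\exists\gamma<\kappa\ \forall i>\gamma\ (r<a_i)\}.
\]
This set is downward closed. If \(\xi\) is realized, that is, \(\xi^L=(-\infty,c)\) or \((-\infty,c]\) for some \(c\in R\), then we expect \(a_i\to c\). Fix \(\varepsilon>0\) and choose \(\gamma\) with \(|a_i-a_j|<\varepsilon/2\) for \(i,j>\gamma\). Then \(c-\varepsilon\in\xi^L\) and \(c+\varepsilon\notin\xi^L\), so some tail lies above \(c-\varepsilon\), while cofinally many terms are at most \(c+\varepsilon\). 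Cauchyness then forces the whole tail into \((c-\varepsilon,c+3\varepsilon/2)\), and rescaling gives convergence. The improper cuts \(\pm\infty\) cannot occur, because a Cauchy sequence is eventually bounded.

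\textbf{Step 2: a non-realized cut is dense.} Suppose \(\xi\) is non-principal. We claim \(R(\xi)=\{0\}\). Take \(g>0\) and use Cauchyness with \(\varepsilon=g/3\) to find \(\gamma\) such that the tail \(\{a_i:i>\gamma\}\) lies in an interval of length \(g/3\) around \(a_{\gamma+1}\). Then \(a_{\gamma+1}-g/3\in\xi^L\) while \(a_{\gamma+1}+g/3\notin\xi^L\). Since \(\xi\) is non-principal, there is some \(r\in\xi^L\) with \(r+g\notin\xi^L\). Hence \(g+\xi\neq\xi\), so \(\xi\) is dense.

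\textbf{Step 3: contradiction in the dense case.} This is the main obstacle. By Proposition~\ref{Dense cuts and unique realizations}, there is an ordered field extension in which \(\xi\) has a unique realization \(s\). We need an elementary extension \(R'\succ R\) in which \(R\) is dense and \(\xi\) is realized. The first attempt is \(R'=R\langle s\rangle\), the \(T\)-prime model over \(R\cup\{s\}\), in which \(s\) realizes the type determined by the cut. We then argue that \(R\) is dense in \(R'\). Each element of \(R'\) has the form \(f(s)\) with \(f\) an \(R\)-definable function, which by monotonicity is continuous and monotone on an \(R\)-interval around \(\xi\). Uniqueness of realization, or equivalently the fact that \(\xi\) is dense so that \(R\)-points approach \(s\) arbitrarily closely, then transfers to \(f(s)\) via continuity, so \(R\) is dense in \(R'\). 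By Proposition~\ref{Existence of dense completion}, \(R'\) embeds over \(R\) into \(S=R\), which is impossible because \(s\notin R\) realizes a non-realized cut. Hence \(\xi\) is realized in \(R\), and Step 1 finishes the proof.

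The delicate point is the density of \(R\) in \(R\langle s\rangle\). If needed, we can instead cite that the dense completion realizes every dense cut, since \(S\) contains the unique realization. This follows from the uniqueness clause and the universal property as sketched above.
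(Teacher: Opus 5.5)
Your Steps 1 and 2 follow the paper's route. The paper builds essentially the same cut, as \(\sup_\alpha\inf_{i>\alpha}a_i=\inf_\alpha\sup_{i>\alpha}a_i\) in \(\overline R\), and shows it is principal or dense by the same \(r/3\) estimate you use. It then simply asserts that, since \(R\) is dense complete, \(\xi\) is realized by some \(a\in R\). A non-principal cut of \(R\) can never be realized by an element of \(R\), so what that sentence actually uses is that a dense complete field has no dense cuts; this is Tressl's fact that the dense completion realizes every dense cut. Your contradiction framing in Step 3 is therefore the accurate reading. The difference is that you try to derive this fact from the universal property instead of citing it.

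As written, that derivation does not go through, at exactly the point you flag.
\begin{itemize}
\item Proposition~\ref{Dense cuts and unique realizations} plays no role. Its realization lives in an arbitrary ordered field, and uniqueness there says nothing about \(R\langle s\rangle\). Simply realize the type of \(\xi\), which is complete by o-minimality, in an elementary extension.
\item The phrase \emph{transfers via continuity} gives at most that each \(f(s)\) lies within every \(\varepsilon\in R^{>0}\) of \(R\). Even this needs uniform continuity of \(f\) on a closed \(R\)-interval around \(\xi\), to get \(\delta\in R^{>0}\); continuity at \(s\) inside \(R'\) only yields a \(\delta\in R'^{>0}\), which might be infinitesimal. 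Density of \(\xi\) then supplies \(p\in R\) with \(|p-s|<\delta\).
\item Density of \(R\) in \(R\langle s\rangle\) also requires that \(R\langle s\rangle\) contain no \(y>0\) below all of \(R^{>0}\). Approximation alone cannot give this.
\end{itemize}

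Here is how to get the last point. Suppose \(y=f(s)\) were such an element. Then \(f\) is not constant near \(\xi\) (else \(y\in R\)), so it is strictly monotone there and \(s\in\dcl(R,y)\), say \(s=g(y)\). Since \(y\) realizes \(0^+\) and \(s\) is \(R\)-bounded, monotonicity gives \(L:=\lim_{x\to0^+}g(x)\in R\) with \(s\) infinitesimally close to \(L\). That contradicts non-principality of \(\xi\). Equivalently, \(\xi\) has a unique realization in \(R\langle s\rangle\), and this is the uniqueness you actually need, not uniqueness in Tressl's ordered field.

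With these two additions Step 3 is complete. Alternatively, cite Tressl directly, which puts you at the paper's level of rigor. Note, though, that your fallback does not follow \emph{as sketched above} until the density claim is proved.
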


\begin{proof}
Let \((a_i)_{i<\kappa}\) be a Cauchy \(\kappa\)-sequence in \(R\). For each \(\alpha<\kappa\), define
\[
L_\alpha:=\inf_{\alpha<i<\kappa}a_i,
\qquad
U_\alpha:=\sup_{\alpha<i<\kappa}a_i
\]
in the order completion \(\overline R\). Set \(\xi_-:=\sup_{\alpha<\kappa}L_\alpha\) and \(\xi_+:=\inf_{\alpha<\kappa}U_\alpha\). We first show that \(\xi_-=\xi_+\). Suppose not. Choose \(g,g'\in R\) with \(\xi_-<g<g'<\xi_+\), and set \(\varepsilon:=g'-g>0\). Since \((a_i)_{i<\kappa}\) is Cauchy, choose \(\alpha<\kappa\) such that \(|a_i-a_j|<\varepsilon\) for all \(\alpha<i,j<\kappa\). But \(L_\alpha<g\) and \(g'<U_\alpha\), so there are \(i,j>\alpha\) such that \(a_i<g\) and \(g'<a_j\), a contradiction.

Let \(\xi\) be the common value of \(\xi_-\) and \(\xi_+\). We claim that \(\xi\) is either principal or dense. Suppose that \(\xi\) is non-principal. Let \(r\in R^{>0}\). Choose \(\alpha<\kappa\) such that \(|a_i-a_j|<r/3\) for all \(\alpha<i,j<\kappa\), and fix \(i>\alpha\). Then \(a_i-r/3\leq L_\alpha\leq \xi\leq U_\alpha\leq a_i+r/3\). Put \(q:=a_i-r/2\). Then \(q<\xi\), while \(q+r>\xi\). Hence translation by \(r\) cannot fix \(\xi\). Since this holds for every \(r>0\), the invariance group of \(\xi\) is trivial, and therefore \(\xi\) is dense.

Since \(R\) is dense complete, the cut \(\xi\) is realized by some \(a\in R\). Let \(\varepsilon\in R^{>0}\). Choose \(\alpha<\kappa\) such that \(|a_i-a_j|<\varepsilon\) for all \(\alpha<i,j<\kappa\). Since \(\xi\) is realized by \(a\), every sufficiently late \(a_i\) lies in \((a-\varepsilon,a+\varepsilon)\). Hence \(a_i\to a\).
\end{proof}

For \([(a_i)],[(b_i)]\in\overline X\), define
\[
\overline d([(a_i)],[(b_i)])
:=
\lim_{i\to\kappa}d(a_i,b_i),
\]
where the limit is taken in \(S\).

\begin{proposition}\label{Canonical Completion of Metric Space}
The function \(\overline d\) is a well-defined \(S\)-metric on \(\overline X\), and \((\overline X,S,\overline d)\) is complete. Moreover, the map \(\iota:X\to\overline X\), \(x\mapsto[(x)_{i<\kappa}]\), is an isometric embedding, and \(\iota(X)\) is dense in \(\overline X\).
\end{proposition}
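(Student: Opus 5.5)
The plan follows the classical Cauchy completion of a metric space, checking at each step that long \(\kappa\)-sequences and \(S\)-valued distances behave like ordinary sequences and real distances. A preliminary remark: \(S\) is dense complete, since the dense completion of \(S\) is a dense extension of \(R\) and so embeds uniquely back into \(S\). Also \(\ci(S^{>0})=\kappa\) because \(R\) is dense in \(S\). Lemma~\ref{Dense Complete implies Cauchy Complete} therefore applies to \(S\), and every Cauchy \(\kappa\)-sequence in \(S\) converges in \(S\).

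\textbf{Well-definedness and metric axioms.} Given Cauchy sequences \((a_i)\) and \((b_i)\), the triangle inequality gives
\[
|d(a_i,b_i)-d(a_j,b_j)|\le d(a_i,a_j)+d(b_i,b_j).
\]
So \((d(a_i,b_i))_{i<\kappa}\) is Cauchy in \(R\subseteq S\), and its limit exists in \(S\) by the remark above. If \((a_i)\sim(a_i')\) and \((b_i)\sim(b_i')\), then
\[
|d(a_i,b_i)-d(a_i',b_i')|\le d(a_i,a_i')+d(b_i,b_i'),
\]
which tends to \(0\), so the limits agree. We also check that \(\sim\) is an equivalence relation, with transitivity coming from the triangle inequality and an \(\varepsilon/2\) argument. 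Symmetry, nonnegativity and the triangle inequality pass to limits: limits in \(S\) preserve non-strict inequalities, since \(S\) has the order topology and is Hausdorff. For positivity, \(\overline d=0\) means \(d(a_i,b_i)\to0\) in \(S\). By density of \(R\) in \(S\), it suffices to test against \(\varepsilon\in R^{>0}\), and this is exactly the relation \(\sim\). Hence \(\overline d\) is an \(S\)-metric.

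\textbf{Isometry and density.} For constant sequences, \(\overline d(\iota x,\iota y)=d(x,y)\), and \(\iota\) is injective. For density, fix \([(a_i)]\) and \(\varepsilon\in S^{>0}\), and by Lemma~\ref{Dense in Nonprincipal Cuts} or density of \(R\) take \(\varepsilon'\in R\) with \(0<\varepsilon'<\varepsilon\). Cauchyness gives \(\gamma\) with \(d(a_i,a_j)<\varepsilon'/2\) for all \(i,j>\gamma\). Fixing \(j>\gamma\), we get
\[
\overline d(\iota a_j,[(a_i)])=\lim_i d(a_j,a_i)\le\varepsilon'/2<\varepsilon.
\]
The same estimate shows that \(\iota(a_j)\to[(a_i)]\) as \(j\to\kappa\).

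\textbf{Completeness (the main obstacle).} Let \((\xi_\alpha)_{\alpha<\kappa}\) be Cauchy in \(\overline X\). Fix a decreasing coinitial sequence \((\varepsilon_\alpha)_{\alpha<\kappa}\) in \(R^{>0}\); it is also coinitial in \(S^{>0}\). By density, choose \(x_\alpha\in X\) with \(\overline d(\iota x_\alpha,\xi_\alpha)<\varepsilon_\alpha\). Then
\[
d(x_\alpha,x_\beta)\le \varepsilon_\alpha+\overline d(\xi_\alpha,\xi_\beta)+\varepsilon_\beta .
\]
Since \(\kappa\) is regular (it is the coinitiality of an ordered set), given \(\varepsilon\) we can choose \(\gamma\) beyond both the Cauchy index for \(\varepsilon/3\) and the index where \(\varepsilon_\alpha<\varepsilon/3\). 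This shows that \((x_\alpha)\) is Cauchy in \(X\), so \(\xi:=[(x_\alpha)]\in\overline X\). Then
\[
\overline d(\xi_\alpha,\xi)\le\varepsilon_\alpha+\overline d(\iota x_\alpha,\xi),
\]
and \(\overline d(\iota x_\alpha,\xi)=\lim_\beta d(x_\alpha,x_\beta)\) is small for large \(\alpha\). Hence \(\xi_\alpha\to\xi\).

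The delicate points all sit in this step. Distances are \(S\)-valued, but the diagonal sequence must consist of points of \(X\), so the choices of \(x_\alpha\) use \(R\)-valued tolerances. Moreover, \(\kappa\) must be the right index length for \(S\), which needs \(\ci(S^{>0})=\kappa\). Finally, the bound on \(\lim_\beta d(x_\alpha,x_\beta)\) requires that limits in \(S\) respect bounds coming from \(R\).
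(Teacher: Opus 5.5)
Your proposal is correct and takes the same route as the paper. The steps match: you show \((d(a_i,b_i))_{i<\kappa}\) is Cauchy via the triangle inequality and take its limit in the dense complete field \(S\); you prove density of \(\iota(X)\) by a tail estimate with an \(R\)-valued tolerance; and you prove completeness by a diagonal choice of \(x_\alpha\) along a decreasing coinitial sequence \((\varepsilon_\alpha)_{\alpha<\kappa}\). You also supply details the paper leaves implicit, namely dense completeness of \(S\), positivity of \(\overline d\), and the estimate showing \(\xi_\alpha\to[(x_\alpha)]\). One small point: regularity of \(\kappa\) is not needed to choose \(\gamma\) beyond two indices, since the maximum of two ordinals below \(\kappa\) is already below \(\kappa\).
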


\begin{proof}
Let \((a_i)\) and \((b_i)\) be Cauchy \(\kappa\)-sequences in \(X\). Then \((d(a_i,b_i))_{i<\kappa}\) is Cauchy in \(R\), since
\[
|d(a_i,b_i)-d(a_j,b_j)|
\leq d(a_i,a_j)+d(b_i,b_j).
\]
As \(S\) is dense complete, Lemma~\ref{Dense Complete implies Cauchy Complete} gives a limit in \(S\). This defines \(\overline d\). If \((a_i)\sim(a_i')\) and \((b_i)\sim(b_i')\), then the same inequality shows that \(\lim_i d(a_i,b_i)=\lim_i d(a_i',b_i')\), so \(\overline d\) is well-defined.

The metric axioms follow by passing to limits from the corresponding metric axioms for \(d\). The map \(\iota\) is distance-preserving by definition. To see that \(\iota(X)\) is dense, let \([(a_i)]\in\overline X\) and let \(\varepsilon\in S^{>0}\). Choose \(r\in R^{>0}\) with \(0<r<\varepsilon\). Since \((a_i)\) is Cauchy, for all sufficiently large \(i<\kappa\), we have \(\overline d([(a_i)],\iota(a_i))<r<\varepsilon\).

It remains to prove completeness. Let \((z_\alpha)_{\alpha<\kappa}\) be a Cauchy \(\kappa\)-sequence in \(\overline X\), and choose a decreasing coinitial sequence \((\varepsilon_\alpha)_{\alpha<\kappa}\) in \(R^{>0}\). Since \(\iota(X)\) is dense in \(\overline X\), choose \(x_\alpha\in X\) such that \(\overline d(z_\alpha,\iota(x_\alpha))<\varepsilon_\alpha\). Then \((x_\alpha)_{\alpha<\kappa}\) is Cauchy in \(X\), and the class \([(x_\alpha)]\in\overline X\) is the limit of \((z_\alpha)_{\alpha<\kappa}\).
\end{proof}

\begin{proposition}[Universal property of the completion]\label{Universal Property of Metric Completion}
Let \((Y,S,d_Y)\) be a complete \(S\)-metric space, and let \(f:X\to Y\) be an isometric embedding. Then there is a unique isometric embedding \(\overline f:\overline X\to Y\) such that
\[
\overline f\circ\iota=f.
\]
Moreover, \(\overline f(\overline X)=\cl_Y(f(X))\). In particular, if \(f(X)\) is dense in \(Y\), then \(\overline f\) is an isometric isomorphism from \(\overline X\) onto \(Y\).
\end{proposition}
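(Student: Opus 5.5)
We define \(\overline f\) on a class \([(a_i)]\in\overline X\) as the limit in \(Y\) of \((f(a_i))_{i<\kappa}\), check that this is well defined and isometric, and then prove uniqueness and identify the image.

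\emph{Definition of \(\overline f\).} Let \([(a_i)_{i<\kappa}]\in\overline X\). Since \(f\) is an isometric embedding, \(d_Y(f(a_i),f(a_j))=d(a_i,a_j)\). Hence \((f(a_i))_{i<\kappa}\) is a Cauchy \(\kappa\)-sequence in \(Y\). We test Cauchyness with \(\varepsilon\in S^{>0}\), and this is harmless: by Lemma~\ref{Dense in Nonprincipal Cuts}, or simply because \(R\) is dense in \(S\), there is \(r\in R^{>0}\) below any given \(\varepsilon\). The same point is already used in the paper's remark that \(\kappa=\ci(S^{>0})\). Since \(Y\) is complete, the sequence converges to some \(y\in Y\), and limits are unique because \(d_Y\) is a metric. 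Set \(\overline f([(a_i)]):=y\). If \((a_i)\sim(b_i)\), then \(d_Y(f(a_i),f(b_i))=d(a_i,b_i)\) is eventually below every \(\varepsilon\), so the two limits coincide. Thus \(\overline f\) is well defined. For a constant sequence the limit is \(f(x)\), which gives \(\overline f\circ\iota=f\).

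\emph{Isometry.} Let \(y=\lim f(a_i)\) and \(y'=\lim f(b_i)\). The triangle inequality gives
\[
|d_Y(y,y')-d(a_i,b_i)|\leq d_Y(y,f(a_i))+d_Y(y',f(b_i)),
\]
and the right-hand side tends to \(0\). Since the limit in \(S\) is unique, \(d_Y(y,y')=\lim_i d(a_i,b_i)=\overline d([(a_i)],[(b_i)])\). An isometry into a metric space is injective, so \(\overline f\) is an isometric embedding.

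\emph{Uniqueness.} Let \(g\) be another isometric embedding with \(g\circ\iota=f\). Isometries are \(1\)-Lipschitz, hence continuous by Lemma~\ref{Lipschitz Implies Continuity}. The density argument in Proposition~\ref{Canonical Completion of Metric Space} shows that \(\iota(a_i)\to[(a_i)]\). Therefore \(g([(a_i)])=\lim f(a_i)=\overline f([(a_i)])\).

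\emph{Image.} The inclusion \(\overline f(\overline X)\subseteq\cl_Y(f(X))\) holds because each value is a limit of points of \(f(X)\) (Lemma~\ref{Long Sequence and Closure}). For the converse, let \(y\in\cl_Y(f(X))\). By Lemma~\ref{Long Sequence and Closure} there is a \(\kappa\)-sequence \(f(a_i)\to y\). A convergent sequence is Cauchy, so \((a_i)\) is Cauchy in \(X\) because \(f\) is isometric, and \(\overline f([(a_i)])=y\). If \(f(X)\) is dense, the image is all of \(Y\), and a surjective isometry is an isometric isomorphism.

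The only delicate point is that everything is valued in \(S\) while the sequences are indexed by \(\kappa=\ci(R^{>0})\). We handle this using the coinitiality of \(R^{>0}\) in \(S^{>0}\) and the completeness of \(S\) from Lemma~\ref{Dense Complete implies Cauchy Complete}. Apart from that, the argument is the classical one.
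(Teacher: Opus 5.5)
Your proof is correct and follows the paper's argument essentially step for step: you define \(\overline f\) as the limit of \((f(a_i))_{i<\kappa}\), you check well-definedness and \(\overline f\circ\iota=f\) the same way, and you get the image and uniqueness from Lemma~\ref{Long Sequence and Closure} together with continuity and the density of \(\iota(X)\). The only difference is cosmetic: you verify distance preservation by an explicit triangle-inequality estimate where the paper cites continuity of the metric (Lemma~\ref{R-metric is continuous}); also, injectivity of \(\overline f\) comes from \(\overline d\) being a metric on the domain \(\overline X\), not from \(Y\) being a metric space.
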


\begin{proof}
Let \([(a_i)_{i<\kappa}]\in\overline X\). Since \(f\) is an isometric embedding, \((f(a_i))_{i<\kappa}\) is Cauchy in \(Y\). Since \(Y\) is complete, it converges to some point of \(Y\). Define
\[
\overline f([(a_i)_{i<\kappa}]):=\lim_{i\to\kappa}f(a_i).
\]
This is independent of the representative: if \((a_i)\sim(b_i)\), then \(d_Y(f(a_i),f(b_i))=d(a_i,b_i)\to0\), so the two limits agree. It is immediate that \(\overline f\circ\iota=f\).

The map \(\overline f\) is distance-preserving. If \((a_i)\) and \((b_i)\) are Cauchy \(\kappa\)-sequences in \(X\), then by continuity of the distance function,
\[
d_Y(\overline f([(a_i)]),\overline f([(b_i)]))
=
\lim_{i\to\kappa}d_Y(f(a_i),f(b_i))
=
\lim_{i\to\kappa}d(a_i,b_i)
=
\overline d([(a_i)],[(b_i)]).
\]
Thus \(\overline f\) is an isometric embedding.

Since \(\overline f\) is continuous and \(\iota(X)\) is dense in \(\overline X\), we have \(\overline f(\overline X)\subseteq\cl_Y(f(X))\). Conversely, if \(y\in\cl_Y(f(X))\), then by Lemma~\ref{Long Sequence and Closure}, there is a \(\kappa\)-sequence \((x_i)_{i<\kappa}\) in \(X\) such that \(f(x_i)\to y\). Since \(f(x_i)\) is convergent, it is Cauchy, and since \(f\) is an isometric embedding, \((x_i)\) is Cauchy in \(X\). Therefore \([(x_i)]\in\overline X\), and \(\overline f([(x_i)])=y\). Hence \(\overline f(\overline X)=\cl_Y(f(X))\). Uniqueness follows from continuity and density of \(\iota(X)\).
\end{proof}

Let \(R\) be a real closed field, and let \(S\) be its dense completion. We write \(d_{\sup}\) for the sup metric on \(R^n\) and on \(S^n\).

\begin{proposition}\label{Completion of Rn is Sn}
The completion of \((R^n,R,d_{\sup})\) is canonically isometric to \((S^n,S,d_{\sup})\).
\end{proposition}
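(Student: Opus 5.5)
We apply the universal property of the completion, Proposition~\ref{Universal Property of Metric Completion}, to the inclusion \(f:R^n\hookrightarrow S^n\). Three things need checking: that \(f\) is an isometric embedding into an \(S\)-metric space, that \((S^n,S,d_{\sup})\) is complete in the sense of Definition~\ref{Definition of complete R metric space}, and that \(R^n\) is dense in \(S^n\). Once these hold, the proposition gives an isometric isomorphism \(\overline f:\overline{R^n}\to S^n\) with \(\overline f\circ\iota=f\). Canonicity follows from its uniqueness, together with the uniqueness of \(S\) up to unique \(R\)-isomorphism in Proposition~\ref{Existence of dense completion}.

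The first point is immediate. Since \(R\preceq S\), absolute value and maximum are computed the same way in \(R\) and in \(S\), so \(d_{\sup}\) on \(S^n\) restricts to \(d_{\sup}\) on \(R^n\). Density is also direct. Given \(y\in S^n\) and \(\varepsilon\in S^{>0}\), Lemma~\ref{Dense in Nonprincipal Cuts} (or simply density of \(R\) in \(S\)) gives \(r\in R\) with \(0<r<\varepsilon\). Because \(R\) is dense in \(S\), for each coordinate \(y_k\) we can pick \(x_k\in R\cap(y_k-r,y_k+r)\). Then \(d_{\sup}(x,y)<r<\varepsilon\).

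For completeness we use \(\kappa=\ci(R^{>0})=\ci(S^{>0})\), as noted before the construction of \(\overline X\). Let \((y_\alpha)_{\alpha<\kappa}\) be a Cauchy \(\kappa\)-sequence in \(S^n\). Since \(|y_{\alpha,k}-y_{\beta,k}|\le d_{\sup}(y_\alpha,y_\beta)\), each coordinate sequence is Cauchy in \(S\). The field \(S\) is dense complete: any \(S'\succ S\) in which \(S\) is dense has \(R\) dense in it too, so the universal property of \(S\) forces \(S'=S\). Hence Lemma~\ref{Dense Complete implies Cauchy Complete}, applied to \(S\), gives limits \(y_{\alpha,k}\to y_k\in S\). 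Take \(\gamma\) to be the maximum of the finitely many thresholds. Then \(d_{\sup}(y_\alpha,y)<\varepsilon\) for all \(\alpha>\gamma\), so \(y_\alpha\to y\).

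The main obstacle is justifying that \(S\) is itself dense complete, so that Lemma~\ref{Dense Complete implies Cauchy Complete} applies to it. Equivalently, every dense cut of \(S\) must be realized in \(S\). This is a standard consequence of Tressl's characterization of the dense completion, but it should be stated explicitly. The argument would go as follows. Suppose \(\xi\) is a dense cut of \(S\). By Proposition~\ref{Dense cuts and unique realizations} there is an extension in which \(\xi\) has a unique realization. Using o-minimality, this gives \(S'=S\langle b\rangle\succ S\) with \(S\) dense in \(S'\), hence with \(R\) dense in \(S'\). The unique embedding \(S'\to S\) over \(R\) then forces \(b\in S\).
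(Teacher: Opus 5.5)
Your proposal is correct and takes the same route as the paper. You show that the inclusion \(R^n\hookrightarrow S^n\) is an isometric embedding with dense image, obtain completeness of \((S^n,S,d_{\sup})\) by applying Lemma~\ref{Dense Complete implies Cauchy Complete} coordinatewise, and conclude with Proposition~\ref{Universal Property of Metric Completion}. Your one addition is an explicit argument, via the uniqueness clause of Proposition~\ref{Existence of dense completion}, that \(S\) is itself dense complete, which the paper asserts without proof.
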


\begin{proof}
The inclusion \(R^n\to S^n\) is an isometric embedding. Since \(R\) is dense in \(S\), the image of \(R^n\) is dense in \(S^n\). It remains to show that \((S^n,S,d_{\sup})\) is complete. Let \((a_i)_{i<\kappa}\) be a Cauchy \(\kappa\)-sequence in \(S^n\), and write \(a_i=(a_{i,1},\ldots,a_{i,n})\). For each coordinate \(m=1,\ldots,n\), the sequence \((a_{i,m})_{i<\kappa}\) is Cauchy in \(S\). Since \(S\) is dense complete, Lemma~\ref{Dense Complete implies Cauchy Complete} gives \(a_m\in S\) such that \(a_{i,m}\to a_m\). Hence \(a_i\to(a_1,\ldots,a_n)\) in the sup metric. The result follows from Proposition~\ref{Universal Property of Metric Completion}.
\end{proof}

\begin{lemma}\label{Extending a Lipschitz Function to the Closure of its Domain}
Let \((X,R,d)\) and \((X',R,d')\) be \(R\)-metric spaces, and let \(S\) be the dense completion of \(R\). Let \((\overline X,S,\overline d)\) and \((\overline{X'},S,\overline d')\) be their completions. Suppose that \(U\subseteq X\) and that \(f:U\to X'\) is \(\lambda\)-Lipschitz. Then there is a unique \(\lambda\)-Lipschitz map
\[
\overline f:\cl_{\overline X}(U)\to\overline{X'}
\]
such that \(\overline f|_U=f\).
\end{lemma}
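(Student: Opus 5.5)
We extend \(f\) by continuity along long Cauchy sequences, using the completion formalism of Proposition~\ref{Canonical Completion of Metric Space} and identifying \(X\) and \(X'\) with their images under the embeddings \(\iota\).

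Let \(z\in\cl_{\overline X}(U)\). By Lemma~\ref{Long Sequence and Closure}, applied in the \(S\)-metric space \(\overline X\), there is a \(\kappa\)-sequence \((u_\alpha)_{\alpha<\kappa}\) in \(U\) with \(u_\alpha\to z\). Here we use that \(\kappa=\ci(S^{>0})\), as noted before Proposition~\ref{Canonical Completion of Metric Space}. A convergent sequence is Cauchy, and the Lipschitz bound gives \(d'(f(u_\alpha),f(u_\beta))\leq\lambda d(u_\alpha,u_\beta)\). Hence \((f(u_\alpha))_{\alpha<\kappa}\) is Cauchy in \(X'\), and therefore in \(\overline{X'}\). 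Since \(\overline{X'}\) is complete by Proposition~\ref{Canonical Completion of Metric Space}, it has a limit. We set
\[
\overline f(z):=\lim_{\alpha\to\kappa}f(u_\alpha).
\]

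To see that \(\overline f\) is well-defined, take a second sequence \((v_\alpha)\) in \(U\) with \(v_\alpha\to z\). Then \(\overline d'(f(u_\alpha),f(v_\alpha))\leq\lambda\,\overline d(u_\alpha,v_\alpha)\to0\), so both image sequences have the same limit. Equivalently, one can interleave the two sequences and observe that the interleaved sequence still converges to \(z\). If \(z\in U\), we may take the constant sequence, so \(\overline f|_U=f\).

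To prove the Lipschitz bound, take \(z,w\in\cl_{\overline X}(U)\) with approximating sequences \(u_\alpha\to z\) and \(v_\alpha\to w\) in \(U\). For each \(\alpha\) we have
\[
\overline d'(f(u_\alpha),f(v_\alpha))\leq\lambda\,\overline d(u_\alpha,v_\alpha).
\]
By Lemma~\ref{R-metric is continuous}, applied to \(\overline d\) and \(\overline d'\), both sides converge in \(S\), to \(\overline d'(\overline f(z),\overline f(w))\) and \(\lambda\,\overline d(z,w)\) respectively. Non-strict inequalities are preserved under limits of \(\kappa\)-sequences in \(S\): if the limits violated the inequality, then an \(\varepsilon\) equal to a third of the gap would give a contradiction for late enough \(\alpha\). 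Hence \(\overline f\) is \(\lambda\)-Lipschitz.

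For uniqueness, let \(g\) be another \(\lambda\)-Lipschitz extension of \(f\). By Lemma~\ref{Lipschitz Implies Continuity}, \(g\) is continuous, so Lemma~\ref{Long Sequences and Continuity} gives \(g(z)=\lim_\alpha g(u_\alpha)=\lim_\alpha f(u_\alpha)=\overline f(z)\).

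The main point needing care is bookkeeping rather than a real obstacle. Distances in \(\overline X\) are \(S\)-valued, while \(\lambda\) and the balls in the original spaces are \(R\)-valued. We therefore need two facts: that convergence in \(\overline X\) can be tested with \(\varepsilon\in R^{>0}\), which holds because \(R^{>0}\) is coinitial in \(S^{>0}\); and that the earlier lemmas, stated for \(R\)-metric spaces, apply verbatim with \(S\) in place of \(R\).
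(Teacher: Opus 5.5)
Your proposal is correct and follows essentially the same route as the paper. You define \(\overline f(z)\) as the limit of \(f(u_\alpha)\) along an approximating \(\kappa\)-sequence from \(U\), use the Lipschitz bound for Cauchyness and for independence of the chosen sequence, pass to limits to get the \(\lambda\)-Lipschitz estimate, and obtain uniqueness from continuity and density; your extra bookkeeping (the constant-sequence check that \(\overline f|_U=f\), preservation of non-strict inequalities under limits, and coinitiality of \(R^{>0}\) in \(S^{>0}\)) only makes explicit what the paper leaves implicit.
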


\begin{proof}
Let \(x\in\cl_{\overline X}(U)\). By Lemma~\ref{Long Sequence and Closure}, choose a \(\kappa\)-sequence \((x_i)_{i<\kappa}\) in \(U\) such that \(x_i\to x\) in \(\overline X\). Since \(f\) is \(\lambda\)-Lipschitz, \((f(x_i))_{i<\kappa}\) is Cauchy in \(X'\), and hence converges in \(\overline{X'}\). Define
\[
\overline f(x):=\lim_{i\to\kappa}f(x_i).
\]
If \((y_i)_{i<\kappa}\) is another \(\kappa\)-sequence in \(U\) converging to \(x\), then \(d(x_i,y_i)\to0\), so \(d'(f(x_i),f(y_i))\leq\lambda d(x_i,y_i)\to0\). Thus the definition is independent of the chosen approximating sequence.

If \(x,y\in\cl_{\overline X}(U)\), choose \(\kappa\)-sequences \((x_i)\) and \((y_i)\) in \(U\) converging to \(x\) and \(y\). Then
\[
\overline d'(\overline f(x),\overline f(y))
=
\lim_{i\to\kappa}d'(f(x_i),f(y_i))
\leq
\lambda\lim_{i\to\kappa}d(x_i,y_i)
=
\lambda\overline d(x,y).
\]
Hence \(\overline f\) is \(\lambda\)-Lipschitz. Uniqueness follows from continuity and density of \(U\) in \(\cl_{\overline X}(U)\).
\end{proof}

\subsection{Hausdorff distance and Hausdorff limits}

Throughout this subsection, \((Z,R,d)\) is an \(R\)-metric space, and \(\kappa=\ci(R^{>0})\). For \(X\subseteq Z\) and \(z\in Z\), define
\[
d(z,X):=\inf\{d(z,x):x\in X\}\in\overline R^{\geq0}\cup\{\infty\},
\]
with the convention that \(d(z,\varnothing)=\infty\). For \(X,Y\subseteq Z\), define
\[
D(X,Y):=
\max\left\{
\sup_{x\in X}d(x,Y),
\sup_{y\in Y}d(y,X)
\right\}.
\]
We call \(D(X,Y)\) the \emph{Hausdorff distance} between \(X\) and \(Y\).

\begin{lemma}\label{HausdorffDistanceProperties}
For \(X,Y,W\subseteq Z\), the Hausdorff distance satisfies:
\begin{enumerate}
    \item \(D(X,X)=0\);
    \item \(D(X,Y)=D(Y,X)\);
    \item \(D(X,W)\leq D(X,Y)+D(Y,W)\).
\end{enumerate}
\end{lemma}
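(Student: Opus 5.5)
Parts (1) and (2) come straight from the definition, and (3) is proved pointwise and then passed to suprema. The only subtlety is that all quantities live in the monoid \(\overline R^{\geq0}\cup\{\infty\}\), so the triangle inequality must be checked in the completion. We use the left sum \(+\) from Corollary~\ref{Positive completion is ordered monoid}, extended by \(\xi+\infty=\infty\), and the existence of infima and suprema from Corollary~\ref{Supremum and Infimum of Subsets of Completion Exist}.

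For (1), if \(X=\varnothing\) both suprema are over the empty set and equal \(0\), the least element of \(\overline R^{\geq0}\cup\{\infty\}\). If \(X\neq\varnothing\), then \(d(x,X)\leq d(x,x)=0\) for every \(x\in X\), so both suprema vanish. For (2), the definition of \(D\) is symmetric in \(X\) and \(Y\), since \(\max\) is commutative.

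For (3), if \(D(X,Y)=\infty\) or \(D(Y,W)=\infty\) the inequality is trivial. This covers all cases with exactly one of \(X,Y\) empty, and similarly for \(Y,W\). It also covers \(Y=\varnothing\) unless \(X=W=\varnothing\), in which case \(D(X,W)=0\). So we may assume \(Y\neq\varnothing\) and both distances are finite. By symmetry it suffices to show \(d(x,W)\leq D(X,Y)+D(Y,W)\) for every \(x\in X\); the bound for \(d(w,X)\) with \(w\in W\) is the same argument with the roles of \(X\) and \(W\) exchanged. Taking suprema and the maximum then gives (3).

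The key pointwise step is this. Fix \(x\in X\) and \(y\in Y\). For every \(w\in W\), the triangle inequality in \(R\) gives \(d(x,W)\leq d(x,w)\leq d(x,y)+d(y,w)\). Taking the infimum over \(w\) gives \(d(x,W)\leq d(x,y)+d(y,W)\), and then \(d(x,W)\leq d(x,y)+D(Y,W)\).

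The main obstacle is justifying these passages to infima and suprema in \(\overline R\), which amounts to showing that addition interacts correctly with infima. Concretely, we need: if \(\xi\leq r+s\) for all \(s\in A\subseteq R\), then \(\xi\leq r+\inf A\); and if \(\xi\leq a+\eta\) for all \(a\in B\subseteq R\), then \(\xi\leq\inf B+\eta\).

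To avoid delicate cut arithmetic, we argue with elements of \(R\) and use Lemma~\ref{Dense in Nonprincipal Cuts}. Let \(\delta:=D(X,Y)\) and \(\eta:=D(Y,W)\), and suppose for contradiction that \(d(x,W)>\delta+\eta\). Since the left sum is defined by adding lower parts, the interval between \(\delta+\eta\) and \(d(x,W)\) contains elements of \(R\). We can therefore pick \(p,q\in R\) with \(p>\delta\), \(q>\eta\) and \(p+q<d(x,W)\). Such \(p,q\) exist because the upper part of the cut \(\delta+\eta\) is exactly approximated by sums \(p+q\) with \(p,q\) in the upper parts of \(\delta,\eta\) (up to a principal adjustment handled by density).

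Since \(d(x,Y)\leq\delta<p\), there is \(y\in Y\) with \(d(x,y)<p\). Since \(d(y,W)\leq\eta<q\), there is \(w\in W\) with \(d(y,w)<q\). Then \(d(x,W)\leq d(x,w)<p+q<d(x,W)\), which is a contradiction.

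The step of choosing \(p,q\) is the only place where the definition of the cut sum is really used, and it is the step to verify carefully.
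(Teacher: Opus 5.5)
Your proposal has a genuine gap at exactly the step you flagged, and the inequality you are proving is false for the left sum. You need \(p,q\in R\) with \(p>\delta\), \(q>\eta\) and \(p+q<d(x,W)\). That amounts to saying every element of \(R\) above the left sum \(\delta+\eta\) lies above some element of \(\delta^R+\eta^R\), i.e.\ that \(\delta+\eta\) and \(\delta+^R\eta\) agree up to a principal cut. They need not.

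Here is a counterexample to (3) for the left sum. Let \(R\) be non-Archimedean, \(Z=R\) with \(d(x,y)=|x-y|\), and
\[
X=\{0,1\},\qquad Y=\{1-\varepsilon:\varepsilon\in R^{>0},\ \varepsilon<1/n\text{ for all positive integers }n\},\qquad W=\{2\}.
\]
\begin{itemize}
\item \(D(X,Y)\): we have \(d(0,Y)=\sup_n(1-1/n)=:\delta\) and \(d(1,Y)=0\). Also \(d(1-\varepsilon,X)=\varepsilon\), so \(D(X,Y)=\delta\).
\item \(D(Y,W)\): we have \(d(1-\varepsilon,W)=1+\varepsilon\) and \(d(2,Y)=1\), so \(D(Y,W)=\inf_n(1+1/n)=:\eta\).
\item \(D(X,W)=d(0,W)=2\).
\end{itemize}
However, \(\delta^L+\eta^L=\{g\in R:g\leq 2-1/n\text{ for some }n\}\), so \(\delta+\eta=\sup_n(2-1/n)<2=D(X,W)\). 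Concretely, every \(p>\delta\) satisfies \(p\geq 1-\varepsilon\) with \(\varepsilon\) infinitesimal. Every \(q>\eta\) satisfies \(q\geq 1+1/m\) for some positive integer \(m\). Hence \(p+q>2\), and the \(p,q\) your argument needs do not exist. The discrepancy between the two sums is the whole monad of \(2\), not a principal adjustment.

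The repair is to prove (3) with the right sum \(+^R\), whose upper part is \(\delta^R+\eta^R\) by definition. Then your argument works with a little slack:
\begin{itemize}
\item Take \(p\in\delta^R\), \(q\in\eta^R\), and any \(p'>p\), \(q'>q\) in \(R\).
\item Find \(y\in Y\) with \(d(x,y)<p'\), then \(w\in W\) with \(d(y,w)<q'\). So \(d(x,W)<p'+q'\), hence \(d(x,W)\leq p+q\).
\item Taking the infimum over \(\delta^R+\eta^R\) gives \(d(x,W)\leq\delta+^R\eta\).
\end{itemize}
This is also what the paper's short proof actually uses. Its step of taking the infimum over \(y\in Y\) needs \(\inf_{y\in Y}(d(x,y)+\eta)=d(x,Y)+\eta\). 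The upper part of the left-hand side is \(\{d(x,y):y\in Y\}+\eta^R\), which is the upper part of \(d(x,Y)+^R\eta\) (up to identifying \(g^-\) with \(g^+\)), not of the left sum. In the example, \(\delta+^R\eta=\inf_n(2+1/n)\geq 2\), consistent with (3) for \(+^R\).

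Your treatment of (1), (2), the empty-set and \(\infty\) cases, and the pointwise bound \(d(x,W)\leq d(x,y)+D(Y,W)\) is fine.
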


\begin{proof}
The first two statements are immediate. For the triangle inequality, fix \(x\in X\). For every \(y\in Y\) and \(w\in W\), we have \(d(x,w)\leq d(x,y)+d(y,w)\). Taking the infimum over \(w\in W\), then the infimum over \(y\in Y\), gives \(d(x,W)\leq d(x,Y)+D(Y,W)\). Taking the supremum over \(x\in X\), we get
\[
\sup_{x\in X}d(x,W)\leq D(X,Y)+D(Y,W).
\]
The same argument, with \(X\) and \(W\) interchanged, gives \(\sup_{w\in W}d(w,X)\leq D(W,Y)+D(Y,X)\). Hence \(D(X,W)\leq D(X,Y)+D(Y,W)\).
\end{proof}

\begin{lemma}\label{Hausdorff distance zero iff closures agree}
For \(X,Y\subseteq Z\), \(D(X,Y)=0\) if and only if \(\cl_d(X)=\cl_d(Y)\).
\end{lemma}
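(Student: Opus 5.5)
The plan is to reduce both directions to the pointwise statement that, for \(z\in Z\) and \(X\subseteq Z\), \(d(z,X)=0\) if and only if \(z\in\cl_d(X)\), where \(d(z,X)\) is the infimum taken in \(\overline R\). First I would establish this pointwise statement. If \(z\in\cl_d(X)\), then every ball \(B_d(z,\varepsilon)\) with \(\varepsilon\in R^{>0}\) meets \(X\). Hence \(d(z,X)<\varepsilon\) for every \(\varepsilon\in R^{>0}\). By Lemma~\ref{Dense in Nonprincipal Cuts}, \(R^{>0}\) is coinitial in \(\overline R^{>0}\), so this forces \(d(z,X)=0\). Conversely, if \(d(z,X)=0\), then for each \(\varepsilon\in R^{>0}\) we have \(\varepsilon>0=\inf_{x\in X}d(z,x)\), so some \(x\in X\) satisfies \(d(z,x)<\varepsilon\), and \(z\in\cl_d(X)\). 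The case \(X=\varnothing\) is consistent with this, since \(d(z,\varnothing)=\infty\neq0\) and \(\cl_d(\varnothing)=\varnothing\).

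Next I would treat the forward direction. Suppose \(D(X,Y)=0\). Then \(\sup_{x\in X}d(x,Y)=0\), so \(d(x,Y)=0\) for every \(x\in X\), and the pointwise statement gives \(X\subseteq\cl_d(Y)\). Since \(\cl_d(Y)\) is closed, taking closures yields \(\cl_d(X)\subseteq\cl_d(Y)\). The symmetric argument gives the reverse inclusion, so \(\cl_d(X)=\cl_d(Y)\).

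For the converse, suppose \(\cl_d(X)=\cl_d(Y)\). Then every \(x\in X\) lies in \(\cl_d(Y)\), so \(d(x,Y)=0\) and therefore \(\sup_{x\in X}d(x,Y)=0\). The same holds with the roles of \(X\) and \(Y\) exchanged, so \(D(X,Y)=0\). The degenerate cases need separate care: if \(X=Y=\varnothing\), the supremum over the empty set should be read as \(0\) in \(\overline R^{\geq0}\); if exactly one of the two sets is empty, the closures differ and \(D=\infty\), which agrees with the statement.

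The only delicate point is passing between the cut-valued infimum and strict inequalities with elements of \(R\). This step is exactly where Lemma~\ref{Dense in Nonprincipal Cuts} and the construction of infima in \(\overline R\) from Lemma~\ref{Supremum and Infimum of Subsets of Ordered Group Exist in Completion} are used. The remaining steps are routine.
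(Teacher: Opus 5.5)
Your proof is correct. The forward direction is the paper's argument, except that you prove explicitly the pointwise fact \(d(z,X)=0\iff z\in\cl_d(X)\), using coinitiality of \(R^{>0}\) in \(\overline R^{>0}\). The paper uses this fact tacitly.

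The converse takes a slightly different route. The paper observes \(D(X,\cl_d(X))=0=D(Y,\cl_d(Y))\) and then applies the triangle inequality for \(D\) from the preceding lemma. You instead argue directly: each point of \(X\) lies in \(\cl_d(Y)\), so it has distance \(0\) from \(Y\), and symmetrically for \(Y\).

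Your version is more self-contained. It never needs addition of cuts in \(\overline R\) or the triangle inequality. The paper's claim \(D(X,\cl_d(X))=0\) is itself just your pointwise fact restated. You also treat the empty-set cases explicitly, with the convention \(\sup\varnothing=0\), which the paper leaves implicit.
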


\begin{proof}
Suppose \(D(X,Y)=0\). If \(x\in X\), then \(d(x,Y)=0\), so \(x\in\cl_d(Y)\). Thus \(X\subseteq\cl_d(Y)\), and hence \(\cl_d(X)\subseteq\cl_d(Y)\). By symmetry, \(\cl_d(Y)\subseteq\cl_d(X)\).

Conversely, suppose \(\cl_d(X)=\cl_d(Y)\). Then \(D(X,\cl_d(X))=0\) and \(D(Y,\cl_d(Y))=0\). By Lemma~\ref{HausdorffDistanceProperties},
\[
D(X,Y)\leq D(X,\cl_d(X))+D(\cl_d(X),\cl_d(Y))+D(\cl_d(Y),Y)=0.
\]
Thus \(D(X,Y)=0\).
\end{proof}

Let
\[
\mathcal C(Z):=\{X\subseteq Z:X\text{ is closed}\}.
\]
By Lemma~\ref{Hausdorff distance zero iff closures agree}, the Hausdorff distance separates points on \(\mathcal C(Z)\).

For \(X\subseteq Z\) and \(\varepsilon\in R^{>0}\), define
\[
B_D(X,\varepsilon):=\{Y\subseteq Z:D(X,Y)<\varepsilon\}.
\]
These sets form a basis for a topology on \(\mathcal P(Z)\). Indeed, if \(Y\in B_D(X,\varepsilon)\), then, since \(R\) is dense in \(\overline R\), there is \(\delta\in R^{>0}\) such that \(D(X,Y)+\delta<\varepsilon\). If \(W\in B_D(Y,\delta)\), then \(D(X,W)\leq D(X,Y)+D(Y,W)<\varepsilon\). We denote the resulting topology by \(\mathcal T_D\), and use the same notation for its restriction to \(\mathcal C(Z)\).

\begin{definition}\label{Hausdorff convergence and Cauchy sequences}
A \(\kappa\)-sequence \((X_\alpha)_{\alpha<\kappa}\) in \(\mathcal P(Z)\) is said to \emph{converge to \(X\subseteq Z\) in the Hausdorff sense} if it converges to \(X\) with respect to \(\mathcal T_D\). We write \(X_\alpha\xrightarrow{D}X\). The sequence \((X_\alpha)_{\alpha<\kappa}\) is called \emph{Cauchy} if for every \(\varepsilon\in R^{>0}\), there is \(\gamma<\kappa\) such that \(D(X_\alpha,X_\beta)<\varepsilon\) for all \(\gamma<\alpha,\beta<\kappa\).
\end{definition}

\begin{example}
Hausdorff limits in \(\mathcal P(Z)\) need not be unique. In \(\mathbb R\) with the usual metric, let \(X_n=(0,1-1/n)\). Then each of \((0,1)\), \([0,1)\), \((0,1]\), and \([0,1]\) is a Hausdorff limit of \((X_n)_{n\geq2}\). These sets have the same closure, and hence have Hausdorff distance zero from one another.
\end{example}

\begin{lemma}\label{Hausdorff closure by kappa sequences}
Let \(\mathcal F\subseteq\mathcal P(Z)\), and let \(X\subseteq Z\). Then \(X\in\cl_D(\mathcal F)\) if and only if there is a \(\kappa\)-sequence \((X_\alpha)_{\alpha<\kappa}\) in \(\mathcal F\) such that \(X_\alpha\xrightarrow{D}X\).
\end{lemma}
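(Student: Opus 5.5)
The argument follows the proof of Lemma~\ref{Long Sequence and Closure}, with the \(R\)-metric \(d\) replaced by the \(\overline R\)-valued Hausdorff distance \(D\). The topology \(\mathcal T_D\) has basic neighbourhoods \(B_D(X,\varepsilon)\) with \(\varepsilon\in R^{>0}\). For every \(\varepsilon\in R^{>0}\) and every \(Y\), the set \(B_D(Y,\varepsilon)\) is a \(\mathcal T_D\)-open set containing \(Y\), because \(D(Y,Y)=0<\varepsilon\) by Lemma~\ref{HausdorffDistanceProperties}.

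For the forward direction, suppose \(X\in\cl_D(\mathcal F)\). Fix a decreasing coinitial sequence \((\varepsilon_\alpha)_{\alpha<\kappa}\) in \(R^{>0}\), which exists because \(\kappa=\ci(R^{>0})\). Since \(B_D(X,\varepsilon_\alpha)\) is a neighbourhood of \(X\), it meets \(\mathcal F\), and we choose \(X_\alpha\in\mathcal F\) with \(D(X,X_\alpha)<\varepsilon_\alpha\). It remains to check that \(X_\alpha\xrightarrow{D}X\) in \(\mathcal T_D\). Let \(\mathcal U\) be a \(\mathcal T_D\)-open set containing \(X\). Then \(\mathcal U\) contains a basic set \(B_D(Y,\varepsilon)\ni X\). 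By the argument given right after the definition of \(\mathcal T_D\), which uses density of \(R\) in \(\overline R\), there is \(\delta\in R^{>0}\) with \(B_D(X,\delta)\subseteq B_D(Y,\varepsilon)\). Choose \(\gamma\) with \(\varepsilon_\gamma<\delta\). For \(\alpha>\gamma\), monotonicity gives \(D(X,X_\alpha)<\varepsilon_\alpha\leq\varepsilon_\gamma<\delta\), so \(X_\alpha\in\mathcal U\).

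For the converse, suppose \(X_\alpha\in\mathcal F\) and \(X_\alpha\to X\) in \(\mathcal T_D\). Every open neighbourhood of \(X\) eventually contains some \(X_\alpha\in\mathcal F\), so it meets \(\mathcal F\). Hence \(X\in\cl_D(\mathcal F)\).

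The only point that needs care is the non-metrizable nature of the topology: \(D\) takes values in \(\overline R\), \(\mathcal T_D\) need not be Hausdorff, and a general \(\mathcal T_D\)-neighbourhood of \(X\) need not be a ball centred at \(X\). This is handled by the reduction to balls \(B_D(X,\delta)\) with \(\delta\in R^{>0}\), which uses the triangle inequality of Lemma~\ref{HausdorffDistanceProperties} together with Lemma~\ref{Dense in Nonprincipal Cuts} (coinitiality of \(R^{>0}\) in \(\overline R^{>0}\)).
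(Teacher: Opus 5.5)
Your proof is correct and takes the same route as the paper. The paper observes that the balls \(B_D(X,\varepsilon)\) with \(\varepsilon\in R^{>0}\) form a neighbourhood basis at \(X\) and then repeats the proof of Lemma~\ref{Long Sequence and Closure}; you do the same, writing out the reduction from arbitrary \(\mathcal T_D\)-open sets to these balls and both directions of the argument explicitly.
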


\begin{proof}
Since \(R^{>0}\) is coinitial in \(\overline R^{>0}\), the balls \(B_D(X,\varepsilon)\), with \(\varepsilon\in R^{>0}\), form a neighbourhood basis at \(X\). The result follows by the same proof as Lemma~\ref{Long Sequence and Closure}.
\end{proof}

\begin{lemma}\label{Hausdorff limits unique up to zero distance}
Let \((X_\alpha)_{\alpha<\kappa}\) be a \(\kappa\)-sequence in \(\mathcal P(Z)\) such that \(X_\alpha\xrightarrow{D}X\). Then \(X_\alpha\xrightarrow{D}Y\) if and only if \(D(X,Y)=0\).
\end{lemma}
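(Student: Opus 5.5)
The argument is a direct two-way comparison through the triangle inequality for the Hausdorff distance (Lemma~\ref{HausdorffDistanceProperties}), combined with the description of \(\mathcal T_D\)-convergence in terms of the basic balls \(B_D(X,\varepsilon)\). First I will record the unwinding of the definition. Since the balls \(B_D(X,\varepsilon)\) with \(\varepsilon\in R^{>0}\) form a neighbourhood basis at \(X\) (as observed in the proof of Lemma~\ref{Hausdorff closure by kappa sequences}, using Lemma~\ref{Dense in Nonprincipal Cuts}), \(X_\alpha\xrightarrow{D}X\) means: for every \(\varepsilon\in R^{>0}\) there is \(\gamma<\kappa\) with \(D(X_\alpha,X)<\varepsilon\) for all \(\gamma<\alpha<\kappa\). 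All distances here live in \(\overline R^{\geq0}\cup\{\infty\}\), and the arithmetic is the left sum of cuts. This is an ordered commutative monoid by Corollary~\ref{Positive completion is ordered monoid}, extended by \(\infty\) in the obvious absorbing way.

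For the direction ``if'', suppose \(D(X,Y)=0\) and fix \(\varepsilon\in R^{>0}\). Choose \(\gamma\) with \(D(X_\alpha,X)<\varepsilon\) for \(\alpha>\gamma\). The triangle inequality gives \(D(X_\alpha,Y)\leq D(X_\alpha,X)+D(X,Y)=D(X_\alpha,X)+0\). In the monoid, \(0\) is the neutral element (the class of \(0^+\)), so this equals \(D(X_\alpha,X)<\varepsilon\). Hence \(X_\alpha\xrightarrow{D}Y\).

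For the direction ``only if'', suppose also \(X_\alpha\xrightarrow{D}Y\), and argue by contradiction that \(D(X,Y)>0\). By Lemma~\ref{Dense in Nonprincipal Cuts}, or trivially if \(D(X,Y)=\infty\), pick \(g\in R^{>0}\) with \(g<D(X,Y)\), and set \(\varepsilon:=g/2\). Take \(\alpha\) large enough that both \(D(X_\alpha,X)<\varepsilon\) and \(D(X_\alpha,Y)<\varepsilon\). Then
\[
D(X,Y)\leq D(X,X_\alpha)+D(X_\alpha,Y).
\]
The main (minor) obstacle is justifying that the left sum of two cuts each \(<\varepsilon\) is \(\leq 2\varepsilon=g\). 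This holds because if \(\xi,\eta<\varepsilon\) in \(\overline R\), then \(\xi^L,\eta^L\subseteq(-\infty,\varepsilon)\), so \(\xi^L+\eta^L\subseteq(-\infty,2\varepsilon)\). Hence \(\xi+\eta\leq g\), which contradicts \(g<D(X,Y)\).

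Therefore \(D(X,Y)=0\). Symmetry (Lemma~\ref{HausdorffDistanceProperties}(2)) is used tacitly to write \(D(X,X_\alpha)=D(X_\alpha,X)\).
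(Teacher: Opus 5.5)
Your proof is correct and follows the paper's argument: both directions come from the triangle inequality for \(D\), and the ``only if'' direction makes \(D(X_\alpha,X)\) and \(D(X_\alpha,Y)\) small at the same time. Your contradiction argument, which picks \(g\in R^{>0}\) below \(D(X,Y)\) and bounds the left sum of two cuts, spells out the coinitiality of \(R^{>0}\) in \(\overline R^{>0}\) and the cut arithmetic that the paper leaves implicit when it says ``since \(\varepsilon\) was arbitrary.''
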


\begin{proof}
Suppose first that \(X_\alpha\xrightarrow{D}X\) and \(X_\alpha\xrightarrow{D}Y\). Let \(\varepsilon\in R^{>0}\). For all sufficiently large \(\alpha<\kappa\), we have \(D(X_\alpha,X)<\varepsilon/2\) and \(D(X_\alpha,Y)<\varepsilon/2\). Hence \(D(X,Y)\leq D(X,X_\alpha)+D(X_\alpha,Y)<\varepsilon\). Since \(\varepsilon>0\) was arbitrary, \(D(X,Y)=0\).

Conversely, suppose \(D(X,Y)=0\) and \(X_\alpha\xrightarrow{D}X\). For every \(\varepsilon\in R^{>0}\), all sufficiently large \(\alpha<\kappa\) satisfy \(D(X_\alpha,X)<\varepsilon\), and hence \(D(X_\alpha,Y)\leq D(X_\alpha,X)+D(X,Y)<\varepsilon\). Thus \(X_\alpha\xrightarrow{D}Y\).
\end{proof}

\begin{lemma}\label{Hausdorff Convergence does not Distinguish Closure}
Let \((X_\alpha)_{\alpha<\kappa}\) be a \(\kappa\)-sequence in \(\mathcal P(Z)\), and let \(X\subseteq Z\). Then \(X_\alpha\xrightarrow{D}X\) if and only if \(\cl_d(X_\alpha)\xrightarrow{D}X\).
\end{lemma}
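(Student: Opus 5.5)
The plan is to reduce the statement to the identity \(D(X_\alpha,\cl_d(X_\alpha))=0\) for each \(\alpha<\kappa\), together with the triangle inequality of Lemma~\ref{HausdorffDistanceProperties}. The identity is exactly Lemma~\ref{Hausdorff distance zero iff closures agree}, applied to the sets \(X_\alpha\) and \(\cl_d(X_\alpha)\), which have the same closure because closure is idempotent.

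For the forward direction, suppose \(X_\alpha\xrightarrow{D}X\) and fix \(\varepsilon\in R^{>0}\). Convergence in \(\mathcal T_D\) means that every basic neighbourhood \(B_D(X,\varepsilon)\) eventually contains the sequence, so there is \(\gamma<\kappa\) with \(D(X_\alpha,X)<\varepsilon\) for all \(\gamma<\alpha<\kappa\). For such \(\alpha\), the triangle inequality gives
\[
D(\cl_d(X_\alpha),X)\leq D(\cl_d(X_\alpha),X_\alpha)+D(X_\alpha,X)=0+D(X_\alpha,X)<\varepsilon.
\]
Hence \(\cl_d(X_\alpha)\xrightarrow{D}X\). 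The converse is the same computation with the roles of \(X_\alpha\) and \(\cl_d(X_\alpha)\) swapped, using \(D(X_\alpha,\cl_d(X_\alpha))=0\) again.

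I expect only one small point to need care. Since \(D\) takes values in \(\overline R^{\geq0}\cup\{\infty\}\), I need to know that adding \(0\) in the monoid \((\overline R^{\geq0},+)\) is the identity, and that the strict inequality \(<\varepsilon\) survives this addition. This follows from Corollary~\ref{Positive completion is ordered monoid}, since \(0\) is the neutral element there. The value \(\infty\) never arises in the relevant step, because we only ever use the bound \(D(X_\alpha,X)<\varepsilon\) with \(\varepsilon\) finite.

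Finally, I need the fact that the sets \(B_D(X,\varepsilon)\) with \(\varepsilon\in R^{>0}\) form a neighbourhood basis at \(X\). This was already noted just before Definition~\ref{Hausdorff convergence and Cauchy sequences} and in the proof of Lemma~\ref{Hausdorff closure by kappa sequences}, so it suffices to test convergence with \(\varepsilon\in R^{>0}\). No further obstacle is expected; an alternative route would be to invoke Lemma~\ref{Hausdorff limits unique up to zero distance}, but the direct triangle-inequality argument is cleaner.
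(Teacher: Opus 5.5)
Your proposal is correct and is essentially the paper's argument: both rest on \(D(X_\alpha,\cl_d(X_\alpha))=0\) from Lemma~\ref{Hausdorff distance zero iff closures agree}, combined with the triangle inequality for \(D\). The paper's proof packages the triangle-inequality step by citing Lemma~\ref{Hausdorff limits unique up to zero distance}, but as stated that lemma changes the limit set, not the terms of the sequence, so your direct computation is the more precise way to write this step.
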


\begin{proof}
For every \(\alpha<\kappa\), Lemma~\ref{Hausdorff distance zero iff closures agree} gives \(D(X_\alpha,\cl_d(X_\alpha))=0\). The claim follows from Lemma~\ref{Hausdorff limits unique up to zero distance}.
\end{proof}

\begin{corollary}\label{Closed Hausdorff limits are unique}
A \(\kappa\)-sequence in \(\mathcal C(Z)\) has at most one Hausdorff limit in \(\mathcal C(Z)\).
\end{corollary}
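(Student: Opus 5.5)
The plan is to derive this directly from Lemma~\ref{Hausdorff limits unique up to zero distance} and Lemma~\ref{Hausdorff distance zero iff closures agree}. Let \((X_\alpha)_{\alpha<\kappa}\) be a \(\kappa\)-sequence in \(\mathcal C(Z)\). Suppose \(X,Y\in\mathcal C(Z)\) are both Hausdorff limits of it, that is, \(X_\alpha\xrightarrow{D}X\) and \(X_\alpha\xrightarrow{D}Y\). We must show \(X=Y\).

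The first step applies Lemma~\ref{Hausdorff limits unique up to zero distance}, using \(X_\alpha\xrightarrow{D}X\) as the given convergence. Its forward direction then turns \(X_\alpha\xrightarrow{D}Y\) into \(D(X,Y)=0\). The second step applies Lemma~\ref{Hausdorff distance zero iff closures agree}, which gives \(\cl_d(X)=\cl_d(Y)\). Since \(X\) and \(Y\) are closed in the \(R\)-metric topology, \(\cl_d(X)=X\) and \(\cl_d(Y)=Y\), so \(X=Y\).

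There is no real obstacle here. The only points to check are that "closed" in \(\mathcal C(Z)\) means closed for the \(R\)-metric topology, so that \(\cl_d\) fixes \(X\) and \(Y\), and that the quoted lemmas hold for arbitrary subsets, including \(\varnothing\). For the empty set, \(d(z,\varnothing)=\infty\) makes \(D(\varnothing,Y)=\infty\) whenever \(Y\neq\varnothing\). So \(D=0\) still forces both sets to be empty, and the argument goes through unchanged.
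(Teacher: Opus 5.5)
Your proposal is correct and is essentially the paper's proof: you get \(D(X,Y)=0\) from Lemma~\ref{Hausdorff limits unique up to zero distance}, then \(X=\cl_d(X)=\cl_d(Y)=Y\) from Lemma~\ref{Hausdorff distance zero iff closures agree} and closedness. Your remark on the empty set goes slightly beyond the paper. It is accurate, since \(d(y,\varnothing)=\infty\) gives \(D(\varnothing,Y)=\infty\) for \(Y\neq\varnothing\).
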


\begin{proof}
If \(X,Y\in\mathcal C(Z)\) are both Hausdorff limits of the sequence, then Lemma~\ref{Hausdorff limits unique up to zero distance} gives \(D(X,Y)=0\). Since \(X\) and \(Y\) are closed, Lemma~\ref{Hausdorff distance zero iff closures agree} gives \(X=Y\).
\end{proof}

\begin{remark}
The use of \(\overline R\) is mainly notational. The conditions defining Hausdorff convergence and Cauchyness can be expressed directly in terms of inclusions between \(\varepsilon\)-neighbourhoods. Passing to \(\overline R\) packages these two-sided estimates into a single distance function and allows us to use the triangle inequality in a concise form.
\end{remark}

\begin{example}\label{Example of fake Hausdorff limit from dense completion}
The following example illustrates why \(\kappa\)-sequences from definable families do not automatically detect all cuts in an elementary extension. Let \(\mathbb R\prec R\) be a proper elementary extension of the real field, and let
\[
\widehat{\mathbb R}:=\{r\in R:\text{there is }n\in\mathbb N\text{ such that } |r|<n\}.
\]
Let \(\kappa=\ci(R^{>0})\), and let \((t_\alpha)_{\alpha<\kappa}\) be a cofinal sequence in \(\widehat{\mathbb R}^{>0}\). Consider the \(\kappa\)-sequence of closed intervals \(X_\alpha=[-t_\alpha,t_\alpha]\subseteq R\). Then \((X_\alpha)_{\alpha<\kappa}\) does not converge to \(\widehat{\mathbb R}\) in the Hausdorff sense. Indeed, for each \(\alpha<\kappa\), choose \(n_\alpha\in\mathbb N\) such that \(t_\alpha<n_\alpha\). Then \(n_\alpha+2\in\widehat{\mathbb R}\), but
\[
d(n_\alpha+2,X_\alpha)=n_\alpha+2-t_\alpha>1.
\]
Hence \(\widehat{\mathbb R}\nsubseteq U_d(X_\alpha,1)\), so \(D(X_\alpha,\widehat{\mathbb R})\geq1\) for every \(\alpha<\kappa\).
\end{example}

We next record a pointwise way to read Hausdorff limits of closed sets. Let \((X_\alpha)_{\alpha<\kappa}\) be a \(\kappa\)-sequence in \(\mathcal C(Z)\). A cofinal subsequence means a sequence of the form \((X_{g(\alpha)})_{\alpha<\kappa}\), where \(g:\kappa\to\kappa\) is strictly increasing and cofinal.

\begin{definition}\label{Pointwise lower and upper Hausdorff limits}
Define \(\liminf_{\alpha\to\kappa}X_\alpha\) to be the set of all \(x\in Z\) for which there exist \(\beta<\kappa\) and points \(x_\alpha\in X_\alpha\), for all \(\beta<\alpha<\kappa\), such that \(x_\alpha\to x\). Define \(\limsup_{\alpha\to\kappa}X_\alpha\) to be the set of all \(x\in Z\) for which some cofinal subsequence \((X_{g(\alpha)})_{\alpha<\kappa}\) admits points \(x_{g(\alpha)}\in X_{g(\alpha)}\) such that \(x_{g(\alpha)}\to x\). If these two sets agree, their common value is called the \emph{pointwise limit} of \((X_\alpha)_{\alpha<\kappa}\).
\end{definition}

\begin{proposition}\label{Pointwise Characterization of Hausdorff Limits}
Suppose that a \(\kappa\)-sequence \((X_\alpha)_{\alpha<\kappa}\) in \(\mathcal C(Z)\) converges to \(X\in\mathcal C(Z)\) in the Hausdorff sense. Then
\[
X=\liminf_{\alpha\to\kappa}X_\alpha=\limsup_{\alpha\to\kappa}X_\alpha.
\]
\end{proposition}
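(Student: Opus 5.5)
The plan is to prove the chain of inclusions
\[
X\subseteq\liminf_{\alpha\to\kappa}X_\alpha\subseteq\limsup_{\alpha\to\kappa}X_\alpha\subseteq X .
\]
The middle inclusion is immediate from Definition~\ref{Pointwise lower and upper Hausdorff limits}. Given \(\beta\) and points \(x_\alpha\in X_\alpha\) for \(\beta<\alpha<\kappa\) with \(x_\alpha\to x\), we take the cofinal subsequence \(g(\alpha)=\beta+1+\alpha\). This \(g\) is strictly increasing and cofinal in \(\kappa\), because \(\kappa\) is an infinite regular cardinal: it is the coinitiality of \(R^{>0}\). The tail \((x_{g(\alpha)})\) of a convergent sequence still converges to \(x\).

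For \(X\subseteq\liminf X_\alpha\), fix \(x\in X\) and a decreasing coinitial sequence \((\varepsilon_\gamma)_{\gamma<\kappa}\) in \(R^{>0}\), as in Lemma~\ref{Long Sequence and Closure}. By Hausdorff convergence, for each \(\gamma\) choose \(\delta_\gamma<\kappa\) such that \(D(X_\alpha,X)<\varepsilon_\gamma\) for all \(\alpha>\delta_\gamma\). We may arrange \((\delta_\gamma)\) to be nondecreasing by taking suprema over \(\gamma'\le\gamma\); these suprema stay below \(\kappa\) by regularity.

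\begin{itemize}
\item If \(X=\varnothing\), the inclusion is vacuous.
\item If \(X\neq\varnothing\), then \(D(X_\alpha,X)<\infty\) forces \(X_\alpha\neq\varnothing\) for \(\alpha>\delta_0\).
\end{itemize}

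For \(\alpha>\delta_0\), let \(\gamma(\alpha)\) be the largest \(\gamma\) with \(\delta_\gamma<\alpha\). If no largest one exists, we use the supremum and note that \(D(X_\alpha,X)<\varepsilon_\gamma\) for all such \(\gamma\). We then have \(d(x,X_\alpha)\le D(X_\alpha,X)<\varepsilon_{\gamma(\alpha)}\). Since \(d(x,X_\alpha)\) is an infimum in \(\overline R\), there is \(x_\alpha\in X_\alpha\) with \(d(x,x_\alpha)<\varepsilon_{\gamma(\alpha)}\), and we pick one. As \(\alpha\to\kappa\), \(\gamma(\alpha)\) becomes eventually larger than any fixed \(\gamma\), so \(x_\alpha\to x\). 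The cleaner formulation is: for each \(\varepsilon\in R^{>0}\) there is a tail on which points within \(\varepsilon\) exist. Choosing the \(x_\alpha\) with a radius that shrinks along the index, which is the case split just described, yields convergence. This bookkeeping is the main technical step, but it is routine.

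For \(\limsup X_\alpha\subseteq X\), let \(x_{g(\alpha)}\in X_{g(\alpha)}\) with \(x_{g(\alpha)}\to x\). Since \(X\) is closed, it suffices to show \(d(x,X)=0\), that is, every ball around \(x\) meets \(X\). Given \(\varepsilon\in R^{>0}\), choose \(\alpha\) large enough that \(d(x_{g(\alpha)},x)<\varepsilon/2\) and \(D(X_{g(\alpha)},X)<\varepsilon/2\); the latter is possible since \(g\) is cofinal. Then \(d(x_{g(\alpha)},X)<\varepsilon/2\), so some \(y\in X\) satisfies \(d(x_{g(\alpha)},y)<\varepsilon/2\), and hence \(d(x,y)<\varepsilon\). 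Thus \(x\in\cl_d(X)=X\).

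Closedness of the \(X_\alpha\) is not needed in this argument; only closedness of \(X\) is used, in the final step.
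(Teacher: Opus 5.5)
Your middle inclusion (via $g(\alpha)=\beta+1+\alpha$) and your proof of $\limsup_{\alpha\to\kappa}X_\alpha\subseteq X$ are correct and agree with the paper. The gap is in $X\subseteq\liminf_{\alpha\to\kappa}X_\alpha$, in exactly the case you wave through. Suppose $\{\gamma:\delta_\gamma<\alpha\}$ has no largest element and $\gamma(\alpha)$ is its supremum. Then you only know $D(X_\alpha,X)<\varepsilon_\gamma$ for $\gamma<\gamma(\alpha)$. This does not give $D(X_\alpha,X)<\varepsilon_{\gamma(\alpha)}$, because $D(X_\alpha,X)$ may be the non-principal cut $\inf_{\gamma<\gamma(\alpha)}\varepsilon_\gamma$. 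Your ``cleaner formulation'' is true: for each $\varepsilon\in R^{>0}$, eventually $X_\alpha\cap B_d(x,\varepsilon)\neq\varnothing$. But turning it into a single selection $x_\alpha\in X_\alpha$ on a whole tail with $x_\alpha\to x$ is not routine bookkeeping, and for uncountable $\kappa$ it is impossible in general. The paper's proof has the same hole, in the sentence asserting that one may choose $r_\alpha\in R^{>0}$ with $D(X_\alpha,X)<r_\alpha$ and $r_\alpha\to0$.

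In fact this inclusion, and hence the proposition as stated, fails once $\kappa>\omega$. Here is a counterexample.
\begin{enumerate}
\item Let $R$ be a real closed field with $\ci(R^{>0})=\omega_1$. For instance, take the union of a chain $(R_\gamma)_{\gamma<\omega_1}$ of real closed fields, taking unions at limits, where $R_{\gamma+1}$ contains some $t_\gamma$ with $0<t_\gamma<R_\gamma^{>0}$.
\item Let $(\varepsilon_\gamma)_{\gamma<\omega_1}$ be strictly decreasing and coinitial in $R^{>0}$ with $\varepsilon_0<1$. Take $Z=R$ with $d(x,y)=|x-y|$, $X=[0,1]$, and $X_\alpha=\{r\in[0,1]:r\geq\varepsilon_\gamma\text{ for some }\gamma<\alpha\}$.
\item Each $X_\alpha$ is closed. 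Its complement is $(1,\infty)\cup L_\alpha$, where $L_\alpha=\{r:r<\varepsilon_\gamma\text{ for all }\gamma<\alpha\}$ is downward closed and has no maximum. Indeed, a maximum $m$ would make $\{\varepsilon_\gamma-m:\gamma<\alpha\}$ a countable coinitial subset of $R^{>0}$.
\item The sequence converges. For $\alpha>\gamma+1$ we have $[\varepsilon_{\gamma+1},1]\subseteq X_\alpha\subseteq X$, so $D(X_\alpha,X)\leq\varepsilon_{\gamma+1}<\varepsilon_\gamma$. Hence $X_\alpha\xrightarrow{D}X$.
\item Now let $x_\alpha\in X_\alpha$ for $\beta<\alpha<\omega_1$. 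Send each limit $\alpha>\beta$ to the least $\gamma<\alpha$ with $x_\alpha\geq\varepsilon_\gamma$. This is a regressive function on a stationary set. By Fodor's lemma it takes a constant value $\gamma^*$ on an unbounded set of $\alpha$. So $x_\alpha\geq\varepsilon_{\gamma^*}$ for unboundedly many $\alpha$, and $x_\alpha\not\to0$.
\item Thus $0\in X\setminus\liminf_{\alpha\to\kappa}X_\alpha$, although $0\in\limsup_{\alpha\to\kappa}X_\alpha$ via $x_{\alpha+1}=\varepsilon_\alpha$.
\end{enumerate}

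Your argument is valid when $\kappa=\omega$: arrange $\delta_n\geq n$, so that each $\{n:\delta_n<\alpha\}$ is finite and has a maximum. For general $\kappa$, what survives is $X=\limsup_{\alpha\to\kappa}X_\alpha$. For $X\subseteq\limsup$, use regularity of $\kappa$ to pick a strictly increasing cofinal $g$ with $D(X_{g(\gamma)},X)<\varepsilon_\gamma$. Your neighbourhood formulation of the lower limit also survives. The definition of $\liminf$ would have to be changed to that neighbourhood form for the proposition to hold.
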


\begin{proof}
Clearly \(\liminf_{\alpha\to\kappa}X_\alpha\subseteq\limsup_{\alpha\to\kappa}X_\alpha\).

Let \(x\in X\). Since \(X_\alpha\xrightarrow{D}X\), for all sufficiently large \(\alpha<\kappa\), we may choose \(r_\alpha\in R^{>0}\) such that \(D(X_\alpha,X)<r_\alpha\), with \(r_\alpha\to0\). Hence there is \(x_\alpha\in X_\alpha\) such that \(d(x_\alpha,x)<r_\alpha\) for all sufficiently large \(\alpha<\kappa\). Thus \(x_\alpha\to x\), so \(x\in\liminf_{\alpha\to\kappa}X_\alpha\).

Now let \(x\in\limsup_{\alpha\to\kappa}X_\alpha\). Choose a cofinal subsequence \((X_{g(\alpha)})_{\alpha<\kappa}\) and points \(x_{g(\alpha)}\in X_{g(\alpha)}\) such that \(x_{g(\alpha)}\to x\). Suppose \(x\notin X\). Since \(X\) is closed, choose \(\varepsilon\in R^{>0}\) such that \(B_d(x,\varepsilon)\cap X=\varnothing\). Since \(X_\alpha\xrightarrow{D}X\), there is \(\beta<\kappa\) such that \(X_\gamma\subseteq U_d(X,\varepsilon/2)\) for all \(\beta<\gamma<\kappa\). For sufficiently large \(\alpha<\kappa\), we have \(g(\alpha)>\beta\) and \(d(x_{g(\alpha)},x)<\varepsilon/2\). Then there is \(y\in X\) with \(d(x_{g(\alpha)},y)<\varepsilon/2\), so \(d(x,y)<\varepsilon\), a contradiction. Therefore \(x\in X\).

Thus \(X\subseteq\liminf_{\alpha\to\kappa}X_\alpha\subseteq\limsup_{\alpha\to\kappa}X_\alpha\subseteq X\).
\end{proof}

\begin{proposition}\label{Finite Union of Hausdorff Limits is the Hausdorff Limit of Finite Union}
Let \((X_\alpha)_{\alpha<\kappa}\) and \((Y_\alpha)_{\alpha<\kappa}\) be \(\kappa\)-sequences in \(\mathcal P(Z)\). If \(X_\alpha\xrightarrow{D}X\) and \(Y_\alpha\xrightarrow{D}Y\), then \(X_\alpha\cup Y_\alpha\xrightarrow{D}X\cup Y\).
\end{proposition}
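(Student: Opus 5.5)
The plan is to prove the general inequality
\[
D(X_1\cup X_2,\,Y_1\cup Y_2)\leq\max\{D(X_1,Y_1),D(X_2,Y_2)\}
\]
in \(\overline R^{\geq0}\cup\{\infty\}\) for arbitrary \(X_1,X_2,Y_1,Y_2\subseteq Z\), and then read off convergence directly from Definition~\ref{Hausdorff convergence and Cauchy sequences}. Once the inequality is available, fix \(\varepsilon\in R^{>0}\). Convergence with respect to \(\mathcal T_D\) means that for all sufficiently large \(\alpha<\kappa\) we have \(D(X_\alpha,X)<\varepsilon\). Likewise \(D(Y_\alpha,Y)<\varepsilon\) for all large \(\alpha\). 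Taking the larger of the two thresholds, the inequality gives \(D(X_\alpha\cup Y_\alpha,X\cup Y)<\varepsilon\) eventually. Since the balls \(B_D(X\cup Y,\varepsilon)\) with \(\varepsilon\in R^{>0}\) form a neighbourhood basis (Lemma~\ref{Dense in Nonprincipal Cuts}), this is exactly \(X_\alpha\cup Y_\alpha\xrightarrow{D}X\cup Y\).

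For the inequality, the main steps concern the one-sided distances.

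\emph{First step.} I will use the elementary fact that \(d(z,A\cup B)=\min\{d(z,A),d(z,B)\}\). This holds because an infimum over a union is the minimum of the two infima, and infima exist in \(\overline R\) by Corollary~\ref{Supremum and Infimum of Subsets of Completion Exist}. The convention \(d(z,\varnothing)=\infty\) is consistent with this.

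\emph{Second step.} Take \(x\in X_1\cup X_2\), say \(x\in X_1\). Then
\[
d(x,Y_1\cup Y_2)\leq d(x,Y_1)\leq\sup_{x'\in X_1}d(x',Y_1)\leq D(X_1,Y_1).
\]
Taking the supremum over \(x\) gives \(\sup_{x\in X_1\cup X_2}d(x,Y_1\cup Y_2)\leq\max\{D(X_1,Y_1),D(X_2,Y_2)\}\).

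\emph{Third step.} The symmetric argument, with the roles of the \(X_i\) and \(Y_i\) exchanged, bounds the other one-sided term. Together these yield the inequality.

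I expect no genuine obstacle. The only care needed is bookkeeping in the completed order \(\overline R\cup\{\infty\}\): suprema and infima must be taken there and not in \(R\), and the strict inequality \(<\varepsilon\) must pass through the maximum. That passage is fine because a maximum of two elements strictly below \(\varepsilon\) is itself strictly below \(\varepsilon\). Empty sets cause no trouble either. If, for example, \(X=\varnothing\), then convergence forces \(X_\alpha=\varnothing\) eventually, since otherwise the Hausdorff distance would be \(\infty\). The inequality covers this case automatically.
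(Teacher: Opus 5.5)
Your proof is correct and is essentially the paper's argument: the paper fixes \(\varepsilon\in R^{>0}\), takes \(\alpha\) large enough that \(X\subseteq U_d(X_\alpha,\varepsilon)\) and \(X_\alpha\subseteq U_d(X,\varepsilon)\), and likewise for \(Y\), and then takes unions. That is just your inequality \(D(X_1\cup X_2,Y_1\cup Y_2)\leq\max\{D(X_1,Y_1),D(X_2,Y_2)\}\) unpacked pointwise; your distance-inequality phrasing gives the strict bound \(<\varepsilon\) directly, whereas the inclusion form only yields \(D\leq\varepsilon\), which is harmless since \(\varepsilon\) is arbitrary.
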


\begin{proof}
Let \(\varepsilon\in R^{>0}\). For all sufficiently large \(\alpha<\kappa\), we have \(X\subseteq U_d(X_\alpha,\varepsilon)\), \(Y\subseteq U_d(Y_\alpha,\varepsilon)\), \(X_\alpha\subseteq U_d(X,\varepsilon)\), and \(Y_\alpha\subseteq U_d(Y,\varepsilon)\). Hence \(X\cup Y\subseteq U_d(X_\alpha\cup Y_\alpha,\varepsilon)\) and \(X_\alpha\cup Y_\alpha\subseteq U_d(X\cup Y,\varepsilon)\). Therefore \(X_\alpha\cup Y_\alpha\xrightarrow{D}X\cup Y\).
\end{proof}

\begin{proposition}\label{Uniform Continuity Preserves Hausdorff Limits}
Let \((Z,R,d)\) and \((Z',R,d')\) be \(R\)-metric spaces, and let \(f:Z\to Z'\) be uniformly continuous. If \(X_\alpha\xrightarrow{D}X\), then \(f(X_\alpha)\xrightarrow{D}f(X)\), where the Hausdorff distances are computed using \(d\) and \(d'\), respectively.
\end{proposition}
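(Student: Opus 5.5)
The plan is to reduce the statement to two neighbourhood inclusions and transport each of them through \(f\) by means of uniform continuity. As in the proof of Proposition~\ref{Finite Union of Hausdorff Limits is the Hausdorff Limit of Finite Union}, for \(\kappa\)-sequences in \(\mathcal P(Z)\) the convergence \(X_\alpha\xrightarrow{D}X\) is equivalent to the following condition. For every \(\varepsilon\in R^{>0}\), eventually
\[
X\subseteq U_d(X_\alpha,\varepsilon)\quad\text{and}\quad X_\alpha\subseteq U_d(X,\varepsilon).
\]
I would record this equivalence first. In one direction, \(D(X,Y)<\varepsilon\) gives the two inclusions directly from the definition of \(\inf\) and \(\sup\) in \(\overline R\). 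In the other direction, the two inclusions at level \(\varepsilon\) give \(D(X,Y)\leq\varepsilon<2\varepsilon\). Lemma~\ref{Dense in Nonprincipal Cuts} ensures that testing only against \(\varepsilon\in R^{>0}\) suffices, and the balls \(B_D(X,\varepsilon)\) with \(\varepsilon\in R^{>0}\) form a neighbourhood basis for \(\mathcal T_D\).

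Next, fix \(\varepsilon\in R^{>0}\) and use uniform continuity of \(f\) to choose \(\delta\in R^{>0}\) such that \(d(x,y)<\delta\) implies \(d'(f(x),f(y))<\varepsilon\). This gives the key inclusion
\[
f(U_d(A,\delta))\subseteq U_{d'}(f(A),\varepsilon)\quad\text{for every }A\subseteq Z.
\]
To prove it, take \(y\in U_d(A,\delta)\). Then there is \(a\in A\) with \(d(a,y)<\delta\), so \(d'(f(a),f(y))<\varepsilon\), and hence \(f(y)\in U_{d'}(f(A),\varepsilon)\).

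Now choose \(\gamma<\kappa\) such that, for all \(\gamma<\alpha<\kappa\), we have \(X\subseteq U_d(X_\alpha,\delta)\) and \(X_\alpha\subseteq U_d(X,\delta)\). Applying \(f\) and the key inclusion, we get \(f(X)\subseteq U_{d'}(f(X_\alpha),\varepsilon)\) and \(f(X_\alpha)\subseteq U_{d'}(f(X),\varepsilon)\) for the same \(\alpha\). By the first step, this is exactly \(f(X_\alpha)\xrightarrow{D}f(X)\).

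I expect no substantial obstacle. The only point requiring care is the bookkeeping between strict inequalities in \(\overline R\) and inclusions of \(R\)-indexed neighbourhoods, which the first step handles. Note also that no closedness assumption is needed: the statement concerns \(\mathcal P(Z)\), so we never need \(f(X)\) to be closed.
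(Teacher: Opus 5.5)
Your proposal is correct and is essentially the paper's proof. You pick \(\delta\) from uniform continuity, use \(X_\alpha\xrightarrow{D}X\) to get \(X\subseteq U_d(X_\alpha,\delta)\) and \(X_\alpha\subseteq U_d(X,\delta)\) eventually, and push both inclusions through \(f\) to obtain the corresponding \(\varepsilon\)-inclusions for \(f(X_\alpha)\) and \(f(X)\). Your preliminary step, which makes explicit the equivalence between \(\mathcal T_D\)-convergence and these eventual neighbourhood inclusions, is used only tacitly in the paper, so spelling it out adds rigour without changing the argument.
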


\begin{proof}
Let \(\varepsilon\in R^{>0}\). By uniform continuity, choose \(\delta\in R^{>0}\) such that \(d(z,w)<\delta\) implies \(d'(f(z),f(w))<\varepsilon\). Since \(X_\alpha\xrightarrow{D}X\), for all sufficiently large \(\alpha<\kappa\), we have \(X_\alpha\subseteq U_d(X,\delta)\) and \(X\subseteq U_d(X_\alpha,\delta)\). It follows that \(f(X_\alpha)\subseteq U_{d'}(f(X),\varepsilon)\) and \(f(X)\subseteq U_{d'}(f(X_\alpha),\varepsilon)\). Therefore \(f(X_\alpha)\xrightarrow{D}f(X)\).
\end{proof}

\section{Cauchy Limits of Lipschitz Cells}

In this section we prove the geometric part of the argument. We work with a single definable family whose fibers have a fixed Lipschitz cell presentation. The point is that, under a Hausdorff-Cauchy assumption, the defining functions of the fibers have limits, and the limiting set is again described by the same recursive cell construction, except that the limiting functions need not be definable and bands are allowed to collapse.

Throughout this section, assume that \(\mathcal M\) is dense complete. Let \(A\subseteq M^{m+n}\) be a bounded \(\mathcal C^p\) cell, for some \(p>0\), and write \(A'=\Pi_m(A)\). We assume that \(A\) is presented over the parameter coordinates, so that every fiber \(A_a\), for \(a\in A'\), has the same recursive cell form. We further assume that there is \(\lambda\in M^{>0}\) such that every finite boundary function occurring in the recursive cell presentation of every fiber \(A_a\) is \(\lambda\)-Lipschitz. All Hausdorff distances in this section are computed with respect to the sup metric.

\begin{definition}\label{Definition of Lipschitz limit cell}
We define \emph{\(\lambda\)-Lipschitz limit cells} by induction on \(n\). For \(n=1\), a \(\lambda\)-Lipschitz limit cell is a point, an interval, a ray, or \(M\). Suppose \(n>1\), and that \(\lambda\)-Lipschitz limit cells have been defined in \(M^{n-1}\). Let \(U\subseteq M^{n-1}\) be a \(\lambda\)-Lipschitz limit cell. If \(f:U\to M\) is \(\lambda\)-Lipschitz, then \(\Gamma(f)\) is a \(\lambda\)-Lipschitz limit cell. If \(f,g:U\to M\) are \(\lambda\)-Lipschitz and \(f\leq g\), then
\[
(f,g)^{\lim}_U:=\{(x,y)\in U\times M:f(x)\leq y\leq g(x)\}
\]
is a \(\lambda\)-Lipschitz limit cell.
\end{definition}

\begin{lemma}\label{Cauchy sequences from a definable family converges}
Let \((a_i)_{i<\kappa}\) be a \(\kappa\)-sequence in \(A'\), and suppose that \((A_{a_i})_{i<\kappa}\) is Cauchy with respect to the Hausdorff distance. Then there exists a \(\lambda\)-Lipschitz limit cell \(B\subseteq M^n\) such that \(A_{a_i}\xrightarrow{D}B\).
\end{lemma}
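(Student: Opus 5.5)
We argue by induction on \(n\), following the recursive cell presentation of the fibers. Fix the presentation of \(A\). Each fiber \(A_a\) is then determined by a base cell \((A_a)'\subseteq M^{n-1}\), obtained by projecting onto the first \(n-1\) coordinates, together with either one \(\lambda\)-Lipschitz function \(f_a\) (a graph) or two \(\lambda\)-Lipschitz functions \(f_a<g_a\) (a band). Boundedness of \(A\) keeps every relevant quantity inside a fixed box \([-c,c]^n\). The projection \(\Pi_{n-1}\) is \(1\)-Lipschitz for the sup metric, so \(D(\Pi_{n-1}X,\Pi_{n-1}Y)\leq D(X,Y)\). Hence \(((A_{a_i})')_{i<\kappa}\) is again Hausdorff-Cauchy, and it is a \(\kappa\)-sequence of fibers of the bounded Lipschitz cell \(\Pi_{m+n-1}(A)\) with the inherited presentation. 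By the induction hypothesis, \((A_{a_i})'\xrightarrow{D}U\) for some \(\lambda\)-Lipschitz limit cell \(U\subseteq M^{n-1}\).

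For the base case \(n=1\), each fiber is a point or an open interval \((f_{a_i},g_{a_i})\) with endpoints in \([-c,c]\). The main step is to show that the endpoint sequences \((f_{a_i})\) and \((g_{a_i})\) are Cauchy in \(M\). If \(D(A_{a_i},A_{a_j})<\varepsilon\), then \(|f_{a_i}-f_{a_j}|\leq\varepsilon\) and \(|g_{a_i}-g_{a_j}|\leq\varepsilon\); compare the infima and the suprema of the two sets. Since \(\mathcal M\) is dense complete, Lemma~\ref{Dense Complete implies Cauchy Complete} gives limits \(f\leq g\). The closed interval \([f,g]\), which may collapse to a point, is then a Hausdorff limit, because \(D\bigl((f_{a_i},g_{a_i}),[f,g]\bigr)\leq\max\{|f_{a_i}-f|,|g_{a_i}-g|\}\). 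This is also why Definition~\ref{Definition of Lipschitz limit cell} uses closed bands.

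For the inductive step, the obstacle is that the functions \(f_{a_i}\) live on varying domains \((A_{a_i})'\). I would extend each \(f_{a_i}\) to a \(\lambda\)-Lipschitz function \(\tilde f_{a_i}\) on all of \(M^{n-1}\) by the McShane formula
\[
\tilde f_{a_i}(x)=\inf_{u\in(A_{a_i})'}\bigl(f_{a_i}(u)+\lambda d(x,u)\bigr).
\]
This infimum exists in \(M\) because it is definable and bounded below; no completeness is needed, only definability in \(\mathcal M\). Truncate to \([-c,c]\) if necessary. The key estimate is
\[
\sup_{x\in K}|\tilde f_{a_i}(x)-\tilde f_{a_j}(x)|\leq (2\lambda+1)\,D(A_{a_i},A_{a_j}),
\]
where \(K\) is a neighbourhood of \(U\). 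For a band, the constant comes from the top and bottom boundaries: given \(x\) in the base of \(A_{a_i}\), the point \((x,f_{a_i}(x))\) is within \(\delta\) of some \((u,y)\in A_{a_j}\), so \(f_{a_j}(u)\leq y+\ldots\) and \(|x-u|<\delta\), giving one inequality via the Lipschitz bound. The reverse inequality requires using the lowest point of the fiber of \(A_{a_j}\) near \(x\) and a symmetric argument. For the extreme boundary values this works because \(f\) is the infimum of the fiber coordinate. I expect this two-sided comparison to be the hardest step, and I would check it carefully for degenerate bands where \(f\) and \(g\) nearly coincide. Given the estimate, \((\tilde f_{a_i}(x))_i\) is Cauchy uniformly in \(x\) on \(U\). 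Lemma~\ref{Dense Complete implies Cauchy Complete} then yields pointwise limits \(f\) and \(g\) on \(U\), and these remain \(\lambda\)-Lipschitz with \(f\leq g\) by passing to the limit.

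Finally, set \(B:=\Gamma(f|_U)\) or \(B:=(f,g)^{\lim}_U\), and verify \(A_{a_i}\xrightarrow{D}B\). Given \(\varepsilon\), choose \(i\) large enough that \(D((A_{a_i})',U)<\varepsilon\) and \(\sup|\tilde f_{a_i}-f|,\sup|\tilde g_{a_i}-g|<\varepsilon\). A point \((x,y)\in A_{a_i}\) has some \(u\in U\) with \(|x-u|<\varepsilon\), and the vertical coordinate \(y\) is moved into \([f(u),g(u)]\) at cost at most \((\lambda+1)\varepsilon\). The symmetric direction is handled the same way, using the density of \((A_{a_i})'\) near \(U\) together with the vertical convexity of the fibers. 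This gives \(D(A_{a_i},B)\leq(\lambda+2)\varepsilon\), which completes the induction.
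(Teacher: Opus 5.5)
Your proof is correct. It shares the paper's skeleton: induction on \(n\), \(1\)-Lipschitz projection to get a Hausdorff-Cauchy base, and dense completeness to produce limits. The inductive step, however, uses a genuinely different device.

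The paper never extends the boundary functions. For each \(x\in U_\infty\) it picks \(x_i\in U_{a_i}\) with \(x_i\to x\), shows by contradiction that \((f_i(x_i))_{i<\kappa\)} is Cauchy, and checks that the limit does not depend on the approximating sequence. It then verifies Hausdorff convergence by a two-index comparison: choose \(j>i\) and auxiliary points, and decrease \(\delta\) ``if necessary''. For bands it only asserts that ``the same estimates'' carry over.

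Your McShane extension puts all the \(f_{a_i}\) on a common domain and turns the step into uniform convergence of functions. The Lipschitz bound of the limit, the inequality \(f\le g\), and the final bound \(D(A_{a_i},B)\le(\lambda+2)\varepsilon\) then follow in one line each. It also handles the band case explicitly, through one-sided inequalities (points of \(A_{a_j}\) lie strictly above \(f_{a_j}\)) combined with symmetry in \(i,j\). That is exactly what the paper's ``same estimates'' silently requires. The cost is only that the McShane infimum must exist in \(M\), which you correctly obtain from definable completeness.

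Some simplifications:
\begin{itemize}
\item Your key estimate holds on all of \(M^{n-1}\) with constant \(\lambda+1\), so the neighbourhood \(K\) is unnecessary.
\item Degenerate bands cause no difficulty, since each one-sided inequality uses only \(f_{a_j}<y\) (resp.\ \(y<g_{a_j}\)) on \(A_{a_j}\).
\item For the upper boundary, use the dual extension \(\sup_u\bigl(g_{a_i}(u)-\lambda d(x,u)\bigr)\). With the infimum form you need an extra Lipschitz detour, which inflates the constant to \(3\lambda+1\).
\item The point \((x,f_{a_i}(x))\) lies only in the closure of the open band. Use \((x,f_{a_i}(x)+\eta)\) and let \(\eta\to0\), or invoke closure-invariance of \(D\).
\end{itemize}
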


\begin{proof}
We prove the lemma by induction on \(n\). Since the fiber cell presentation is fixed and \(A\) is bounded, each fiber is obtained from its base by either taking a graph or a bounded band. The induction will show that the bases converge to Lipschitz limit cells, and that the uniformly Lipschitz boundary functions converge to boundary functions defining the required limit cell.

Suppose first that \(n=1\). Since the family is bounded and the fibers have the same cell form, each \(A_{a_i}\) is either a point \(\{b_i\}\) or a bounded interval \((b_i,c_i)\). The Cauchy assumption implies that every endpoint sequence which occurs is Cauchy in \(M\). Indeed, if an endpoint sequence were not Cauchy, then by comparing the corresponding one-dimensional fibers, the Hausdorff distance between arbitrarily late fibers would be bounded below by some positive element of \(M\), contradicting the Cauchy assumption. Since \(\mathcal M\) is dense complete, Lemma~\ref{Dense Complete implies Cauchy Complete} gives limits for the endpoint sequences.

Let \(b\) and \(c\) be the limits of the endpoint sequences which occur. If the fibers are points, set \(B=\{b\}\). If the fibers are intervals, set
\[
B=(b,c)^{\lim}:=(b,c)\cup\{b:b=c\}.
\]
Then \(B\) is a \(\lambda\)-Lipschitz limit cell, and the convergence of endpoints gives \(A_{a_i}\xrightarrow{D}B\).

Now suppose \(n>1\), and assume the statement holds in dimension \(n-1\). Let \(U\subseteq M^{m+n-1}\) be the base family obtained by projecting the fixed fiber presentation one level down, so that \(U_a=\Pi_{n-1}(A_a)\). Since projection is \(1\)-Lipschitz for the sup norm,
\[
D(U_{a_i},U_{a_j})\leq D(A_{a_i},A_{a_j})
\]
for all \(i,j<\kappa\). Hence \((U_{a_i})_{i<\kappa}\) is Cauchy. By the inductive hypothesis, there is a \(\lambda\)-Lipschitz limit cell \(U_\infty\subseteq M^{n-1}\) such that \(U_{a_i}\xrightarrow{D}U_\infty\).

We first treat the graph case. Suppose that \(A_a=\Gamma(f_a)\) for every \(a\in A'\), where \(f_a:U_a\to M\) is \(\lambda\)-Lipschitz. Write \(f_i:=f_{a_i}\). Let \(x\in U_\infty\). Choose \(x_i\in U_{a_i}\), for all sufficiently large \(i<\kappa\), such that \(x_i\to x\). We claim that \((f_i(x_i))_{i<\kappa}\) is Cauchy.

Suppose not. Then there is \(\varepsilon\in M^{>0}\) such that for every \(\alpha<\kappa\), there are \(i,j>\alpha\) with \(|f_i(x_i)-f_j(x_j)|>\varepsilon\). Choose \(\eta\in M^{>0}\) such that
\[
\eta<\min\left\{\frac{\varepsilon}{4},\frac{\varepsilon}{4\lambda}\right\}.
\]
Choose such \(i,j\) sufficiently large so that \(\|x_i-x_j\|<\varepsilon/(4\lambda)\) and \(D(A_{a_i},A_{a_j})<\eta\). Since \(D(A_{a_i},A_{a_j})<\eta\), there is \(y_i\in U_{a_i}\) such that
\[
\|(y_i,f_i(y_i))-(x_j,f_j(x_j))\|<\eta.
\]
In particular, \(\|y_i-x_j\|<\varepsilon/(4\lambda)\) and \(|f_i(y_i)-f_j(x_j)|<\varepsilon/4\). Then
\[
|f_i(x_i)-f_j(x_j)|
\leq |f_i(x_i)-f_i(y_i)|+|f_i(y_i)-f_j(x_j)|
<\lambda\|x_i-y_i\|+\frac{\varepsilon}{4}.
\]
Moreover, \(\|x_i-y_i\|\leq\|x_i-x_j\|+\|x_j-y_i\|<\varepsilon/(2\lambda)\). Hence \(|f_i(x_i)-f_j(x_j)|<\varepsilon\), a contradiction. Thus \((f_i(x_i))_{i<\kappa}\) is Cauchy, and by Lemma~\ref{Dense Complete implies Cauchy Complete}, it converges in \(M\). Define
\[
f(x):=\lim_{i\to\kappa}f_i(x_i).
\]

The definition is independent of the chosen approximating sequence. If \(y_i\in U_{a_i}\) also converges to \(x\), then \(|f_i(x_i)-f_i(y_i)|\leq\lambda\|x_i-y_i\|\to0\). Thus the two limits agree. The same estimate shows that \(f\) is \(\lambda\)-Lipschitz. Indeed, if \(x,y\in U_\infty\), choose \(x_i,y_i\in U_{a_i}\) with \(x_i\to x\) and \(y_i\to y\). Then
\[
|f(x)-f(y)|
=
\lim_{i\to\kappa}|f_i(x_i)-f_i(y_i)|
\leq
\lambda\|x-y\|.
\]

Set \(B:=\Gamma(f)\). We show that \(A_{a_i}\xrightarrow{D}B\). Let \(\varepsilon\in M^{>0}\), and choose \(\delta\in M^{>0}\) such that \((3\lambda+2)\delta<\varepsilon\). Since \(U_{a_i}\xrightarrow{D}U_\infty\) and \((A_{a_i})_{i<\kappa}\) is Cauchy, there is \(\alpha<\kappa\) such that, for all \(i,j>\alpha\), we have \(D(U_{a_i},U_\infty)<\delta\) and \(D(A_{a_i},A_{a_j})<\delta\).

Let \((x,f(x))\in B\), and fix \(i>\alpha\). Choose \(x_i\in U_{a_i}\) with \(\|x_i-x\|<\delta\). By the definition of \(f(x)\), choose \(j>i\) and \(x_j\in U_{a_j}\) such that \(\|x_j-x\|<\delta\) and \(|f_j(x_j)-f(x)|<\delta\). Since \(D(A_{a_i},A_{a_j})<\delta\), there is \(y_i\in U_{a_i}\) such that \(\|(y_i,f_i(y_i))-(x_j,f_j(x_j))\|<\delta\). Then \(\|y_i-x_i\|<3\delta\), and the Lipschitz bound for \(f_i\) gives \(\|(x_i,f_i(x_i))-(x,f(x))\|<\varepsilon\), after decreasing \(\delta\) at the start if necessary. Thus \(B\subseteq U_d(A_{a_i},\varepsilon)\) for all sufficiently large \(i\).

Conversely, let \((x_i,f_i(x_i))\in A_{a_i}\) with \(i>\alpha\). Choose \(x\in U_\infty\) with \(\|x_i-x\|<\delta\). Choose \(j>i\) and \(x_j\in U_{a_j}\) such that \(\|x_j-x\|<\delta\) and \(|f_j(x_j)-f(x)|<\delta\). Since \(D(A_{a_i},A_{a_j})<\delta\), there is \(y_j\in U_{a_j}\) such that \(\|(x_i,f_i(x_i))-(y_j,f_j(y_j))\|<\delta\). Then \(\|y_j-x_j\|<3\delta\), and the Lipschitz bound for \(f_j\) gives \(\|(x_i,f_i(x_i))-(x,f(x))\|<\varepsilon\), after decreasing \(\delta\) at the start if necessary. Thus \(A_{a_i}\subseteq U_d(B,\varepsilon)\) for all sufficiently large \(i\). Hence \(A_{a_i}\xrightarrow{D}B\).

Next suppose that \(A_a=(f_a,g_a)_{U_a}\) for every \(a\in A'\), where \(f_a,g_a:U_a\to M\) are \(\lambda\)-Lipschitz and \(f_a<g_a\). Write \(f_i:=f_{a_i}\) and \(g_i:=g_{a_i}\). We construct limiting functions \(f,g:U_\infty\to M\) as in the graph case. Namely, for \(x\in U_\infty\), choose \(x_i\in U_{a_i}\) with \(x_i\to x\), and set
\[
f(x):=\lim_{i\to\kappa}f_i(x_i),
\qquad
g(x):=\lim_{i\to\kappa}g_i(x_i).
\]
The same Cauchy and Lipschitz estimates used in the graph case show that these limits exist, are independent of the chosen approximating sequence, and define \(\lambda\)-Lipschitz functions on \(U_\infty\). Since \(f_i<g_i\) for all \(i<\kappa\), we have \(f\leq g\). Set \(B:=(f,g)^{\lim}_{U_\infty}\).

We show that \(A_{a_i}\xrightarrow{D}B\). Let \(\varepsilon\in M^{>0}\), and choose \(\delta\in M^{>0}\) sufficiently small with respect to \(\varepsilon\) and \(\lambda\). Take \(\alpha<\kappa\) such that, for all \(i,j>\alpha\), \(D(U_{a_i},U_\infty)<\delta\) and \(D(A_{a_i},A_{a_j})<\delta\). Let \((x,y)\in B\). If \(f(x)<y<g(x)\), choose \(x_i\in U_{a_i}\) with \(\|x_i-x\|<\delta\). By the definitions of \(f\) and \(g\), after increasing \(\alpha\) if necessary, \(f_i(x_i)<y<g_i(x_i)\), so \((x_i,y)\in A_{a_i}\) is within \(\varepsilon\) of \((x,y)\). If \(f(x)=g(x)\) and \(y=f(x)\), choose \(j>i\) and \(x_j\in U_{a_j}\) close to \(x\) such that both \(f_j(x_j)\) and \(g_j(x_j)\) are close to \(y\). Choose \(y_j\) with \(f_j(x_j)<y_j<g_j(x_j)\). Then \((x_j,y_j)\in A_{a_j}\) is close to \((x,y)\). Since \(D(A_{a_i},A_{a_j})<\delta\), there is a point of \(A_{a_i}\) close to \((x_j,y_j)\), hence close to \((x,y)\). Therefore \(B\subseteq U_d(A_{a_i},\varepsilon)\) for all sufficiently large \(i\).

Conversely, let \((x_i,y_i)\in A_{a_i}\) with \(i>\alpha\). Choose \(x\in U_\infty\) with \(\|x_i-x\|<\delta\). Choose \(j>i\) and \(x_j\in U_{a_j}\) close to \(x\) such that \(f_j(x_j)\) is close to \(f(x)\) and \(g_j(x_j)\) is close to \(g(x)\). Since \(D(A_{a_i},A_{a_j})<\delta\), the point \((x_i,y_i)\) is close to some point \((z_j,w_j)\in A_{a_j}\). The Lipschitz estimates compare \(f_j(z_j),g_j(z_j)\) with \(f_j(x_j),g_j(x_j)\), and hence compare them with \(f(x),g(x)\). It follows that \((x_i,y_i)\) is within \(\varepsilon\) of \(B\). Thus \(A_{a_i}\subseteq U_d(B,\varepsilon)\) for all sufficiently large \(i\), and \(A_{a_i}\xrightarrow{D}B\).
\end{proof}

\section{External Fibers and Tame Representatives}

Throughout this section, assume that \(\mathcal M\) is dense complete. Let \(A\subseteq M^{m+n}\) be a bounded fiberwise \(\lambda\)-Lipschitz \(\mathcal C^p\) cell, and write \(A'=\Pi_m(A)\). Let \(B\subseteq M^n\) be a Hausdorff limit of fibers from \(A\). Fix a \(\kappa\)-sequence \((a_i)_{i<\kappa}\) in \(A'\) such that \(A_{a_i}\xrightarrow{D}B\). By Lemma~\ref{Cauchy sequences from a definable family converges}, \(B\) is a \(\lambda\)-Lipschitz limit cell.

We first introduce the notation used to take standard parts of external fibers.

\begin{definition}\label{Tame part and standard part}
Let \(\mathcal N\succ\mathcal M\), and let \(X\subseteq N^n\). The \emph{tame part} of \(X\) over \(M\) is
\[
\tame_M(X):=\{x\in X:\text{there exists }c\in M^n\text{ such that }d^{\mathcal N}(x,c)<r\text{ for every }r\in M^{>0}\}.
\]
For every \(x\in\tame_M(X)\), the corresponding \(c\in M^n\) is unique; we call it the \emph{standard part} of \(x\) over \(M\), and denote it by \(\st_M(x)\). We define \(\st_M(X):=\{\st_M(x):x\in\tame_M(X)\}\). When the ambient elementary extension is clear, we omit it from the notation.
\end{definition}

We now replace the convergent sequence of fibers by a single external fiber. The parameter of this fiber is chosen from a cofinal ultraproduct of the original sequence, so the full Hausdorff convergence, rather than merely pointwise convergence of distance functions, is retained.

\begin{lemma}\label{Hausdorff limits are standard parts of external fibers}
Let \((a_i)_{i<\kappa}\) be a sequence in \(A'\) such that \(A_{a_i}\xrightarrow{D}B\). Then there are an elementary extension \(\mathcal N\succeq\mathcal M\) and \(a^*\in(A')^{\mathcal N}\) such that \(\st_M(A_{a^*}^{\mathcal N})=\cl(B)\). Equivalently, \(D(\st_M(A_{a^*}^{\mathcal N}),B)=0\).
\end{lemma}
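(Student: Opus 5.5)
We realize the limit by a single nonstandard parameter via compactness. Choose a decreasing coinitial sequence \((\varepsilon_k)_{k<\kappa}\) in \(M^{>0}\). Since \(A_{a_i}\xrightarrow{D}B\), for each \(k<\kappa\) we can pick \(\gamma_k<\kappa\) such that \(D(A_{a_i},B)<\varepsilon_k\) for all \(i>\gamma_k\).

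Consider the partial type \(\Sigma(x)\) over \(M\) consisting of the following formulas:
\begin{enumerate}
\item \(x\in A'\);
\item for every \(k<\kappa\) and every \(b\in B\), the formula \(\exists y\,(y\in A_x\wedge d(y,b)<\varepsilon_k)\);
\item for every \(k<\kappa\) and every finite tuple \(c_1,\dots,c_s\in M^n\) with \(d(c_l,B)\geq\varepsilon_k\) for all \(l\), the formula \(\forall y\in A_x\,\bigwedge_l d(y,c_l)\geq\varepsilon_k/2\).
\end{enumerate}
Any finite subset of \(\Sigma\) mentions finitely many \(k\), so it is satisfied by \(a_i\) for \(i\) large: clauses of the second kind hold because \(B\subseteq U_d(A_{a_i},\varepsilon_k)\), and clauses of the third kind hold because every \(y\in A_{a_i}\) is within \(\varepsilon_k/2\) of \(B\) once \(D(A_{a_i},B)<\varepsilon_k/2\). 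By compactness there are \(\mathcal N\succeq\mathcal M\) and \(a^*\in (A')^{\mathcal N}\) realizing \(\Sigma\). Note that this need not be a tame extension; the lemma only asks for an elementary extension.

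We next prove the inclusion \(\cl(B)\subseteq\st_M(A^{\mathcal N}_{a^*})\). Let \(b\in\cl(B)\). Since \(M\) is dense complete, \(\cl(B)\) is closed in \(M^n\), so \(\cl(B)\) is \(B\) together with its limit points, and we may first treat \(b\in B\). We need a single \(y\in A^{\mathcal N}_{a^*}\) with \(d(y,b)<\varepsilon_k\) for all \(k\) simultaneously, whereas \(\Sigma\) only supplies one witness for each \(k\) separately. This is the main obstacle. The cleanest fix is to enlarge \(\Sigma\) by new variables \(y_b\), one for each \(b\in B\), with formulas \(y_b\in A_x\) and \(d(y_b,b)<\varepsilon_k\) for all \(k\). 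Finite satisfiability survives, since for large \(i\) one can pick \(y_b\in A_{a_i}\) within the finitely many required \(\varepsilon_k\). Every point of \(A^{\mathcal N}_{a^*}\) lies in the bounded box containing \(A\), and \(y_b\) is infinitesimally close to \(b\), so \(\st_M(y_b)=b\). Hence \(B\subseteq\st_M(A^{\mathcal N}_{a^*})\).

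To pass from \(B\) to \(\cl(B)\), we show that \(\st_M(X)\) is closed in \(M^n\) for any \(X\subseteq N^n\) definable in \(\mathcal N\). Suppose \(c\in M^n\) is a limit of points \(\st_M(x_k)\) with \(x_k\in X\). Then for each \(\varepsilon\in M^{>0}\) the set \(X\) meets \(B(c,\varepsilon)\), so the type \(\{z\in X\}\cup\{d(z,c)<\varepsilon:\varepsilon\in M^{>0}\}\) is finitely satisfiable. Working in a sufficiently saturated \(\mathcal N\), which we may choose from the start, this type is realized, so \(c\in\st_M(X)\). Alternatively, we may simply add variables for the countably many or \(\kappa\)-many points needed. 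Either way, \(\cl(B)\subseteq\st_M(A^{\mathcal N}_{a^*})\).

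Finally we prove the reverse inclusion. Let \(y\in\tame_M(A^{\mathcal N}_{a^*})\) with \(c=\st_M(y)\), and suppose \(c\notin\cl(B)\). Then \(d(c,B)\geq\varepsilon_k\) for some \(k\), and the clause of the third kind with \(s=1\) gives \(d(y,c)\geq\varepsilon_k/2\). This contradicts the fact that \(y\) is infinitesimally close to \(c\). So \(\st_M(A^{\mathcal N}_{a^*})=\cl(B)\), and the equivalent formulation \(D(\st_M(A^{\mathcal N}_{a^*}),B)=0\) follows from Lemma~\ref{Hausdorff distance zero iff closures agree}.
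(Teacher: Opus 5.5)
Your proof is correct. Its skeleton matches the paper's: produce a parameter \(a^*\) in an elementary extension that behaves like a limit of the \(a_i\), get \(\cl(B)\subseteq\st_M(A^{\mathcal N}_{a^*})\) by realizing points of \(A^{\mathcal N}_{a^*}\) infinitesimally close to prescribed points, and get the reverse inclusion because late fibers avoid balls disjoint from \(B\).

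What differs is how \(a^*\) is built. The paper takes the class of \((a_i)_{i<\kappa}\) in an ultrapower \(\mathcal M^\kappa/\mathcal U\), with \(\mathcal U\) containing all final segments, and passes to an \(|M|^+\)-saturated extension. Both inclusions are then read off \L{}o\'s's theorem, and \(a^*\) satisfies every \(\mathcal L(M)\)-formula true of \(a_i\) for all large \(i\). You instead realize a hand-made type \(\Sigma(x)\) encoding just the two halves of \(D(A_x,B)\to 0\). This avoids ultrafilters and makes the reverse inclusion an immediate consequence of your clause (3).

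For this lemma that is enough, but the paper's construction buys more, and the paper uses it right away. It needs the estimate \(D^{\mathcal N}(A^{\mathcal N}_{a^*},A^{\mathcal N}_{a_i})<\varepsilon\) for all late \(i\), and its analogue for the boundary graphs; the lemmas on tameness of the external boundary functions depend on these. This estimate does not follow from your \(\Sigma\): \(B\) is not definable, and your clauses only test \(A^{\mathcal N}_{a^*}\) against points of \(M^n\). For example, over a non-Archimedean \(M\), let the \(a_i\) parametrize Lipschitz bumps at a fixed point of \(M\) with heights tending to \(0\). Take a bump of some height \(r\in M^{>0}\) whose support is an interval of \(N\) around a non-tame point containing no element of \(M\). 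It is invisible to every clause of \(\Sigma\) and to \(\st_M\), yet it stays at Hausdorff distance at least \(r/2\) from \(A^{\mathcal N}_{a_i}\) for all late \(i\). If you want your \(a^*\) to serve downstream, add the first-order clauses \(D(A_x,A_{a_i})\le\varepsilon_k\) for \(i\) beyond the Cauchy threshold for \(\varepsilon_k\). They hold eventually along the sequence, so finite satisfiability survives.

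Two small remarks. First, \(\cl(B)\) is closed regardless of dense completeness. Second, your detour through \(B\), the variables \(y_b\), and closedness of \(\st_M(X)\) is unnecessary. Clause (2) can be stated for every \(c\in\cl(B)\), since it is still eventually true of the \(a_i\). Once \(\mathcal N\) is \(|M|^+\)-saturated, saturation then directly gives a point of \(A^{\mathcal N}_{a^*}\) infinitesimally close to \(c\), exactly as in the paper.
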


\begin{proof}
The case \(B=\varnothing\) is immediate, so suppose \(B\neq\varnothing\). Let \(\mathcal U\) be an ultrafilter on \(\kappa\) containing every final segment of \(\kappa\), and consider the ultrapower \(\mathcal M_0:=\mathcal M^\kappa/\mathcal U\). Let \(a^0=[a_i]_{\mathcal U}\in(A')^{\mathcal M_0}\), and let \(\mathcal N\succeq\mathcal M_0\) be a \(|M|^+\)-saturated elementary extension. We denote the image of \(a^0\) in \(\mathcal N\) by \(a^*\).

We first show that \(\cl(B)\subseteq\st_M(A_{a^*}^{\mathcal N})\). Fix \(c\in\cl(B)\). For every \(r\in M^{>0}\), the convergence \(A_{a_i}\xrightarrow{D}B\) implies that, for all sufficiently large \(i<\kappa\), there is \(y_i\in A_{a_i}\) such that \(d(y_i,c)<r\). Hence, by Łoś's theorem, \(\mathcal M_0\models\exists y(y\in A_{a^0}\wedge d(y,c)<r)\). It follows that the type
\[
\{y\in A_{a^*}^{\mathcal N}\}\cup\{d(y,c)<r:r\in M^{>0}\}
\]
is finitely satisfiable. By saturation, it is realized by some \(y\in A_{a^*}^{\mathcal N}\). Thus \(y\) is infinitesimally close to \(c\), and therefore \(c\in\st_M(A_{a^*}^{\mathcal N})\).

Conversely, let \(c\in\st_M(A_{a^*}^{\mathcal N})\), and choose \(y\in A_{a^*}^{\mathcal N}\) infinitesimally close to \(c\). Suppose \(c\notin\cl(B)\). Then there is \(r\in M^{>0}\) such that \(B(c,r)\cap B=\varnothing\). Since \(y\) is infinitesimally close to \(c\), the structure \(\mathcal N\) satisfies \(\exists z(z\in A_{a^*}\wedge d(z,c)<r/3)\). By elementarity, the same formula holds in \(\mathcal M_0\) with \(a^0\) in place of \(a^*\). Łoś's theorem gives a set in \(\mathcal U\) of indices \(i<\kappa\) for which there is \(z_i\in A_{a_i}\) satisfying \(d(z_i,c)<r/3\).

On the other hand, \(D(A_{a_i},B)<r/3\) for all sufficiently large \(i<\kappa\). Since \(\mathcal U\) contains every final segment, we may choose such an index \(i\) also belonging to the set obtained from Łoś's theorem. Then there is \(b_i\in B\) with \(d(z_i,b_i)<r/3\). The triangle inequality gives \(d(c,b_i)<2r/3<r\), contradicting \(B(c,r)\cap B=\varnothing\). Hence \(c\in\cl(B)\), and the reverse inclusion follows.
\end{proof}

Replacing \(B\) by its closure does not change its Hausdorff class, so in the rest of this section we assume that \(B\) is closed. By Lemma~\ref{Hausdorff limits are standard parts of external fibers}, there are an elementary extension \(\mathcal N\succ\mathcal M\) and \(a^*\in(A')^{\mathcal N}\) such that \(B=\st_M(A_{a^*}^{\mathcal N})\). Recall that only the tame points of \(A_{a^*}^{\mathcal N}\) contribute to this standard part. Moreover, the cofinal-ultraproduct construction in the proof of that lemma gives, for every \(\varepsilon\in M^{>0}\), \(D^{\mathcal N}(A_{a^*}^{\mathcal N},A_{a_i}^{\mathcal N})<\varepsilon\) for all sufficiently large \(i<\kappa\).

We first show that the defining functions of the external cell send tame points of their domains to tame elements.

Suppose first that \(A=\Gamma(f)\), where \(f:U\to M\) is \(\mathcal L(M)\)-definable and \(\mathcal C^p\), and every function \(f(a,-):U_a\to M\) is \(\lambda\)-Lipschitz for a fixed \(\lambda\in M^{>0}\). Let \(f^{\mathcal N}:U^{\mathcal N}\to N\) be its interpretation in \(\mathcal N\).

\begin{lemma}\label{External graph functions preserve tame points}
If \(b^*\in\tame_M(U_{a^*}^{\mathcal N})\), then \(f^{\mathcal N}(a^*,b^*)\) is tame over \(M\).
\end{lemma}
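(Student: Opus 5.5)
We must show that if \(b^*\in U_{a^*}^{\mathcal N}\) is infinitesimally close to some \(c\in M^{n-1}\), then \(f^{\mathcal N}(a^*,b^*)\) is \(M\)-bounded. Since \(\mathcal M\) is dense complete, \(M\)-bounded should also give tame. The plan has two parts: a boundedness step, which is purely first-order, and a tameness step, which uses the fact that \(B=\st_M(A^{\mathcal N}_{a^*})\).

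\emph{Boundedness.} Because \(A\) is a bounded cell, there is \(R_0\in M^{>0}\) with \(A\subseteq[-R_0,R_0]^{m+n}\). This is an \(\mathcal L(M)\)-sentence, so it transfers to \(\mathcal N\). Hence \(|f^{\mathcal N}(a^*,b^*)|\le R_0\). Alternatively, the transferred Lipschitz condition \(|f(a^*,x)-f(a^*,y)|\le\lambda\|x-y\|\) bounds the oscillation, so boundedness at one point gives it everywhere.

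\emph{Tameness.} The concern is that \(\mathcal N\) is a saturated extension, not necessarily tame over \(M\), so a bounded element need not have a standard part in \(M\). I would use the Hausdorff approximation recorded after Lemma~\ref{Hausdorff limits are standard parts of external fibers}: for each \(\varepsilon\in M^{>0}\), \(D^{\mathcal N}(A_{a^*}^{\mathcal N},A_{a_i}^{\mathcal N})<\varepsilon\) for all large \(i\).

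Fix \(\varepsilon\) and such an \(i\). Pick \((y,f(a_i,y))\in A_{a_i}^{\mathcal N}\) within \(\varepsilon\) of \((b^*,f^{\mathcal N}(a^*,b^*))\). By elementarity this approximation statement is first-order over \(a^*\), \(b^*\) and \(a_i\in M\). However, \(y\) and \(f(a_i,y)\) lie in \(N\), not in \(M\).

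To get a point of \(M\), use that \(b^*\) is within every \(r\in M^{>0}\) of \(c\). So \(\|y-c\|<2\varepsilon\). By \(\lambda\)-Lipschitzness of \(f(a_i,-)\), which transfers to \(\mathcal N\), we get
\[
|f(a_i,y)-f(a_i,y')|\le 2\lambda\varepsilon
\]
for some \(y'\in U_{a_i}\subseteq M^{n-1}\) near \(c\). Such a \(y'\) exists because \(U_{a_i}\) is \(\mathcal L(M)\)-definable and the existence of a point within \(2\varepsilon\) of \(c\) transfers down from \(\mathcal N\) to \(M\).

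Thus \(f^{\mathcal N}(a^*,b^*)\) lies within \((1+2\lambda)\varepsilon\) of the element \(v_\varepsilon:=f(a_i,y')\in M\). Letting \(\varepsilon\) run through a coinitial \(\kappa\)-sequence, the values \(v_\varepsilon\) form a Cauchy \(\kappa\)-sequence in \(M\). By Lemma~\ref{Dense Complete implies Cauchy Complete} this sequence converges to some \(v\in M\). Then \(|f^{\mathcal N}(a^*,b^*)-v|<r\) for every \(r\in M^{>0}\), which is tameness.

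The main obstacle is this last step. One must check that the approximants \(v_\varepsilon\) can be chosen in \(M\) with errors uniformly controlled by \(\varepsilon\), so that the Cauchy property really holds. The construction above uses only finitely many first-order transfers per \(\varepsilon\), with parameters \(c\), \(a_i\), \(\varepsilon\) from \(M\), so I expect it to go through.
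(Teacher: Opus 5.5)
Your tameness step is correct and is essentially the paper's proof: approximate \((b^*,f^{\mathcal N}(a^*,b^*))\) by a point of \(A_{a_i}^{\mathcal N}\), pull the domain point down to \(U_{a_i}\subseteq M^{n-1}\) by elementarity, and use the \(\lambda\)-Lipschitz bound to get elements of \(M\) within \((1+4\lambda)\varepsilon\) of \(f^{\mathcal N}(a^*,b^*)\). (Your \(2\lambda\) should be \(4\lambda\), since only \(\|y-y'\|<4\varepsilon\) is available.) The paper finishes by noting that the induced cut is dense and invoking dense completeness, while you apply Lemma~\ref{Dense Complete implies Cauchy Complete} to the resulting Cauchy \(\kappa\)-sequence; this is an equivalent repackaging.

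The boundedness step is unnecessary. Its opening claim, that \(M\)-boundedness implies tameness when \(\mathcal M\) is dense complete, is false when \(\mathcal M\) is non-Archimedean and \(\mathcal N\) is not tame. As you note yourself, though, the tameness argument never uses it.
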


\begin{proof}
Set \(b=\st_M(b^*)\) and \(c^*=f^{\mathcal N}(a^*,b^*)\). Fix \(r\in M^{>0}\), and choose \(\eta\in M^{>0}\) such that \((1+4\lambda)\eta<r\). Take \(i<\kappa\) sufficiently large that \(D^{\mathcal N}(A_{a^*}^{\mathcal N},A_{a_i}^{\mathcal N})<\eta\). There exists \((u^*,v^*)\in A_{a_i}^{\mathcal N}\) such that \(d^{\mathcal N}(u^*,b^*)<\eta\) and \(|v^*-c^*|<\eta\). Since \(d^{\mathcal N}(b^*,b)<\eta\), we have \(d^{\mathcal N}(u^*,b)<2\eta\).

The formula asserting that there exists \(u\in U_{a_i}\) such that \(d(u,b)<2\eta\) has parameters from \(M\) and holds in \(\mathcal N\). By elementarity, choose such \(u\in U_{a_i}\). Then \(d^{\mathcal N}(u,u^*)<4\eta\). Since \(v^*=f^{\mathcal N}(a_i,u^*)\) and \(f(a_i,-)\) is \(\lambda\)-Lipschitz, \(|f(a_i,u)-v^*|\leq\lambda d^{\mathcal N}(u,u^*)<4\lambda\eta\). Consequently \(|c^*-f(a_i,u)|<(1+4\lambda)\eta<r\).

Thus \(c^*\) can be approximated arbitrarily closely by elements of \(M\). The cut induced by \(c^*\) over \(M\) is therefore dense, and dense completeness gives \(c\in M\) infinitesimally close to \(c^*\). Hence \(c^*\) is tame over \(M\).
\end{proof}

Suppose next that \(A=(f_0,f_1)_U\), where \(f_0,f_1:U\to M\) are \(\mathcal L(M)\)-definable \(\mathcal C^p\)-functions and, for every \(a\in A'\), both \(f_0(a,-)\) and \(f_1(a,-)\) are \(\lambda\)-Lipschitz on \(U_a\). By the band case in the proof of Lemma~\ref{Cauchy sequences from a definable family converges}, the two boundary graph families \((\Gamma(f_0(a_i,-)))_{i<\kappa}\) and \((\Gamma(f_1(a_i,-)))_{i<\kappa}\) are Cauchy. Since \(a^*\) is represented by the cofinal ultraproduct of \((a_i)_{i<\kappa}\), for every \(\varepsilon\in M^{>0}\) and all sufficiently large \(i<\kappa\), we have
\[
D^{\mathcal N}\left(\Gamma(f_j^{\mathcal N}(a^*,-)),\Gamma(f_j(a_i,-))^{\mathcal N}\right)<\varepsilon
\]
for \(j\in\{0,1\}\).

\begin{lemma}\label{External band functions preserve tame points}
If \(b^*\in\tame_M(U_{a^*}^{\mathcal N})\), then \(f_0^{\mathcal N}(a^*,b^*)\) and \(f_1^{\mathcal N}(a^*,b^*)\) are tame over \(M\).
\end{lemma}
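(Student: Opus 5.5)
The plan is to treat each boundary function \(f_j\), \(j\in\{0,1\}\), separately and to reduce to the argument of Lemma~\ref{External graph functions preserve tame points}. The only input used there was a Hausdorff approximation of the external graph by the graphs over the parameters \(a_i\). For the band we use the displayed estimate
\[
D^{\mathcal N}\left(\Gamma(f_j^{\mathcal N}(a^*,-)),\Gamma(f_j(a_i,-))^{\mathcal N}\right)<\varepsilon
\]
in place of the estimate for the whole fiber \(A_{a^*}^{\mathcal N}\). Fix \(j\), set \(b=\st_M(b^*)\) and \(c^*=f_j^{\mathcal N}(a^*,b^*)\), fix \(r\in M^{>0}\), and choose \(\eta\in M^{>0}\) with \((1+4\lambda)\eta<r\).

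First take \(i<\kappa\) large enough that the displayed Hausdorff distance for \(f_j\) is below \(\eta\). Since \((b^*,c^*)\) lies on the external graph, there is \((u^*,v^*)\) on \(\Gamma(f_j(a_i,-))^{\mathcal N}\) with \(d^{\mathcal N}(u^*,b^*)<\eta\) and \(|v^*-c^*|<\eta\). Then \(d^{\mathcal N}(u^*,b)<2\eta\). The statement ``there is \(u\in U_{a_i}\) with \(d(u,b)<2\eta\)'' has parameters in \(M\), so by elementarity we get such a \(u\in U_{a_i}\subseteq M^{n-1}\), and \(d^{\mathcal N}(u,u^*)<4\eta\). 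The \(\lambda\)-Lipschitz property of \(f_j(a_i,-)\) transfers to \(\mathcal N\), since it is a first-order statement with parameters \(a_i,\lambda\). This gives \(|f_j(a_i,u)-v^*|<4\lambda\eta\), and hence \(|c^*-f_j(a_i,u)|<r\) with \(f_j(a_i,u)\in M\).

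Next, \(c^*\) is \(M\)-bounded, because \(A\) is bounded. Since \(c^*\) is within every \(r\in M^{>0}\) of some element of \(M\), the cut it induces over \(M\) is either realized in \(M\) or has trivial invariance group, which is the argument used in Lemma~\ref{Dense Complete implies Cauchy Complete}. So it is dense. Dense completeness of \(\mathcal M\) then yields \(c\in M\) infinitesimally close to \(c^*\), and \(c^*\) is tame.

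The main obstacle is justifying the displayed estimate for the boundary graphs. I would first check it by a Łoś argument, as in the proof of Lemma~\ref{Hausdorff limits are standard parts of external fibers}. By the band case of Lemma~\ref{Cauchy sequences from a definable family converges}, the family \((\Gamma(f_j(a_i,-)))_i\) is Cauchy. So for \(\varepsilon\) and all large \(i\), the first-order condition \(D(\Gamma(f_j(a_k,-)),\Gamma(f_j(a_i,-)))<\varepsilon/2\) holds for a final segment of \(k\). This condition is expressible because both graphs are definable with \(M\)-parameters. Łoś transfers it to \(a^0\), and elementarity transfers it to \(a^*\). Since this is taken as given in the text preceding the lemma, the proof itself reduces to repeating the graph computation for \(j=0,1\).
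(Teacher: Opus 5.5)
Your proposal is correct and is essentially the paper's own proof: you fix \(j\), choose \(\eta\) with \((1+4\lambda)\eta<r\), use the displayed boundary-graph estimate to find \((u^*,v^*)\in\Gamma(f_j(a_i,-))^{\mathcal N}\) near \((b^*,c^*)\), pull a point \(u\in U_{a_i}\) down to \(M\) by elementarity, apply the \(\lambda\)-Lipschitz bound for \(f_j(a_i,-)\), and conclude by dense completeness. Your ultrapower sketch for the displayed estimate only spells out what the paper asserts in the paragraph preceding the lemma, so nothing is missing.
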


\begin{proof}
Let \(b=\st_M(b^*)\), and set \(c_j^*=f_j^{\mathcal N}(a^*,b^*)\) for \(j\in\{0,1\}\). Fix \(r\in M^{>0}\), and choose \(\eta\in M^{>0}\) such that \((1+4\lambda)\eta<r\). Take \(i<\kappa\) sufficiently large that, for both \(j=0\) and \(j=1\),
\[
D^{\mathcal N}\left(\Gamma(f_j^{\mathcal N}(a^*,-)),\Gamma(f_j(a_i,-))^{\mathcal N}\right)<\eta.
\]

Fix \(j\in\{0,1\}\). There exists \((u_j^*,v_j^*)\in\Gamma(f_j(a_i,-))^{\mathcal N}\) such that \(d^{\mathcal N}(u_j^*,b^*)<\eta\) and \(|v_j^*-c_j^*|<\eta\). Since \(d^{\mathcal N}(b^*,b)<\eta\), we have \(d^{\mathcal N}(u_j^*,b)<2\eta\). By elementarity, there exists \(u_j\in U_{a_i}\) such that \(d(u_j,b)<2\eta\), and therefore \(d^{\mathcal N}(u_j,u_j^*)<4\eta\). Since \(v_j^*=f_j^{\mathcal N}(a_i,u_j^*)\) and \(f_j(a_i,-)\) is \(\lambda\)-Lipschitz, we get \(|f_j(a_i,u_j)-v_j^*|<4\lambda\eta\). Hence \(|c_j^*-f_j(a_i,u_j)|<(1+4\lambda)\eta<r\).

Thus \(c_j^*\) can be approximated arbitrarily closely by elements of \(M\). Its cut over \(M\) is dense, so dense completeness gives an element of \(M\) infinitesimally close to \(c_j^*\). Hence \(c_j^*\) is tame over \(M\). Applying this argument for \(j=0\) and \(j=1\), we conclude that both boundary values are tame over \(M\).
\end{proof}

We have therefore dealt with both possible cell forms. In the graph case, the graphing function takes tame points of the external domain to tame elements; in the band case, the same holds for both boundary functions. We now remove the non-tame coordinates of \(a^*\) without changing the standard part of the external fiber.

\begin{lemma}\label{Removing one non-tame parameter}
Let \(I\subseteq M\) be an \(\mathcal L(M)\)-definable open interval, let \(a^*\in I^{\mathcal N}\) be non-tame over \(M\), and suppose that \(U_a=V\) for every \(a\in I\), where \(V\) is a fixed \(\mathcal L(M)\)-definable cell. Then there exists an \(\mathcal L(M)\)-definable open interval \(J\subseteq I\), with \(a^*\in J^{\mathcal N}\), such that \(A_{a_1}=A_{a_2}\) for all \(a_1,a_2\in J\).
\end{lemma}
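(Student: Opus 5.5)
The plan is to argue by o-minimality in the parameter variable, using non-tameness of \(a^*\) to force the fiber map \(a\mapsto A_a\) to be locally constant around \(a^*\). Since \(\mathcal M\) is dense complete, a bounded element of \(N\) whose cut over \(M\) is dense has a standard part in \(M\). So non-tameness of \(a^*\) means one of two things: either \(a^*\) is infinite, or its cut over \(M\) is non-principal and non-dense. In both cases there is \(r_0\in M^{>0}\) such that no element of \(M\) lies within \(r_0\) of \(a^*\). For a bounded \(I\) this amounts to \(r_0\) lying in the invariance group of the cut. The whole argument will exploit the gap this creates between \(a^*\) and every \(\mathcal L(M)\)-definable endpoint.

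\emph{Step 1 (o-minimal splitting of \(I\)).} Since \(U_a=V\) is fixed, the fibers \(A_a\) vary only through the boundary functions \(f(a,-)\) (or \(f_0(a,-),f_1(a,-)\)) on \(V\). The Hausdorff distance \(\delta(a,a'):=D(A_a,A_{a'})\) is then an \(\mathcal L(M)\)-definable function on \(I\times I\). By o-minimality it takes values in \(M\): a definable \(\sup\) of a definable \(\inf\) of a bounded set.

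Let \(E:=\{a\in I:\ a\mapsto A_a \text{ is constant on a neighbourhood of } a\}\). This set is \(\mathcal L(M)\)-definable, so \(I\) splits into finitely many points and open intervals with endpoints in \(M\), each contained in \(E\) or disjoint from \(E\). Because \(a^*\notin M\), it lies in \(J_0^{\mathcal N}\) for one of these open intervals \(J_0\). If \(J_0\subseteq E\), then \(a\mapsto A_a\) is locally constant on the definably connected \(J_0\). Indeed, \(\{a\in J_0:A_a=A_c\}\) is definable, open and closed in \(J_0\), hence all of \(J_0\). In that case \(J:=J_0\) works.

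\emph{Step 2 (excluding \(J_0\cap E=\varnothing\)).} Here the Hausdorff approximation recorded after Lemma~\ref{Hausdorff limits are standard parts of external fibers} enters: for every \(\varepsilon\in M^{>0}\) there is \(i\) with \(D^{\mathcal N}(A^{\mathcal N}_{a^*},A^{\mathcal N}_{a_i})<\varepsilon\), and \(a_i\in M\). For such \(i\), the set \(\{a\in J_0:\delta(a,a_i)<\varepsilon\}\) is \(\mathcal L(M)\)-definable and contains \(a^*\) in \(\mathcal N\). Take its component \((s_\varepsilon,t_\varepsilon)\) containing \(a^*\); its endpoints lie in \(M\). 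The \(r_0\)-gap then gives \(s_\varepsilon<a^*-r_0\) and \(a^*+r_0<t_\varepsilon\). So \((s_\varepsilon,t_\varepsilon)\) is an \(M\)-interval of length \(>2r_0\) containing \(a^*\), and on it all fibers are \(2\varepsilon\)-close to each other.

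I would then try to make these intervals independent of \(\varepsilon\). Consider the \(\mathcal L(M)\)-definable function \(\rho(a):=\sup\{\rho\le r_0:\ \delta \text{ is } \le \text{any prescribed bound on } (a-\rho,a+\rho)\}\). Alternatively, apply cell decomposition to \(\delta\) near the diagonal of \(J_0\times J_0\): on a cell around the diagonal, \(\delta\) is continuous and vanishes on the diagonal. The aim is to show that, if \(A_a\) is nowhere locally constant on \(J_0\), then the \(\varepsilon\)-oscillation of \(a\mapsto A_a\) on intervals of length \(r_0\) is bounded below by an element of \(M^{>0}\) uniformly near \(a^*\). Such a bound would contradict the previous paragraph once \(\varepsilon\) is below it. Concretely, the definable function \(\theta(s):=\sup\{\delta(a,a'):a,a'\in[s,s+r_0]\cap J_0\}\) is positive on \(J_0\) by nowhere-local-constancy. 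By o-minimality it is continuous and monotone on finitely many subintervals, and the interval containing \(a^*\) has \(M\)-endpoints at distance \(>r_0\) from \(a^*\). So \(\theta\) is bounded below on a neighbourhood of \(a^*-r_0/2\) by some \(\theta_0\in M^{>0}\): take \(\theta_0\) as the infimum over a closed \(M\)-subinterval straddling the gap, where \(\theta\) is definable, continuous and positive. Choosing \(\varepsilon<\theta_0/4\) then gives the contradiction.

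The main obstacle is this last uniformity step. One must make sure that a closed \(\mathcal L(M)\)-interval around \(a^*\), on which \(\theta\) has a positive minimum in \(M\), actually exists; the \(r_0\)-gap together with the finiteness of o-minimal breakpoints should supply it. One must also handle the case where \(a^*\) is infinite, by replacing \(I\) by an unbounded end interval and using monotonicity of \(\theta\) at infinity. Once the contradiction is reached, only the case \(J_0\subseteq E\) remains, and \(J:=J_0\) is the required interval.
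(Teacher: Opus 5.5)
Your argument is correct in substance and takes a different route from the paper's. The paper works pointwise in \(b\in V\). Lemma~\ref{External graph functions preserve tame points} (or its band version) makes \(f^{\mathcal N}(a^*,b)\) tame. A strictly monotone \(f(-,b)\) near \(a^*\) would then put \(a^*\) in \(\dcl_{\mathcal N}(M,f^{\mathcal N}(a^*,b))\), a tame extension, so each \(f(-,b)\) must be locally constant at \(a^*\). The paper then uniformizes in \(b\) via the set where \(\partial f/\partial a=0\) and the sup/inf of the endpoint maps \(\ell,u\).

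You instead work with the \(\mathcal L(M)\)-definable distance \(\delta(a,a')=D(A_a,A_{a'})\) and the definable locus \(E\) where \(a\mapsto A_a\) is locally constant. O-minimality puts \(a^*\) in a component \(J_0\), and the good case \(J_0\subseteq E\) is definable connectedness. The bad case is excluded by three ingredients:
\begin{enumerate}
\item the Hausdorff approximations \(\delta^{\mathcal N}(a^*,a_i)<\varepsilon\);
\item the \(r_0\)-gap supplied by dense completeness;
\item a positive lower bound \(\theta_0\in M\) for the oscillation \(\theta\) on a closed \(M\)-interval around \(a^*-r_0/2\).
\end{enumerate}
Your route is uniform in the fiber variable from the start and handles graphs and bands at once. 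It uses neither the Lipschitz bound, nor derivatives, nor the tameness of \(\dcl_{\mathcal N}(M,c^*)\) for tame \(c^*\). It also avoids the paper's step of treating \(\ell\) and \(u\) as \(\mathcal L(M)\)-definable, although they are defined by reference to the position of \(a^*\), so that definable completeness can be applied to \(\sup_b\ell(b)\) and \(\inf_b u(b)\). The paper's route, in exchange, gives a conceptual reason for constancy in each fixed direction \(b\): a non-tame parameter cannot be recovered definably from tame values.

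Three details need tidying.
\begin{enumerate}
\item Positivity of \(\theta\) on \(J_0\) needs \(D(A_a,A_{a'})=0\Rightarrow A_a=A_{a'}\), not merely \(A_a\neq A_{a'}\). This holds here: with fixed base \(V\) and continuous boundary functions, \(\cl(A_a)\cap(V\times M)\) is the graph or the closed band, and this recovers the boundary functions.
\item The \(a_i\) need not lie in \(I\), so define \(\delta\) on \(A'\times A'\).
\item The case of infinite \(a^*\) does not arise, because \(A\) is bounded and \(a^*\in(A')^{\mathcal N}\). You should not try to treat it. Your monotonicity-at-infinity idea fails, since \(\theta\) may tend to \(0\). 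Worse, the statement is false there: for \(f(a,b)=b/a\) on \((1,\infty)\times(0,1)\) the graphs are Hausdorff-Cauchy as \(a\to\infty\), yet \(f(-,b)\) is strictly monotone near an infinite \(a^*\).
\end{enumerate}
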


\begin{proof}
Suppose first that \(A=\Gamma(f)\), where \(f:U\to M\) is \(\mathcal L(M)\)-definable. Since \(U_a=V\) for every \(a\in I\), we have \(A_a=\Gamma(f(a,-):V\to M)\) for every \(a\in I\).

Fix \(b\in V\). By the monotonicity theorem, there is an \(\mathcal L(M)\)-definable open interval \(J_b\subseteq I\), with \(a^*\in J_b^{\mathcal N}\), such that \(f(-,b)\) is either constant or strictly monotone on \(J_b\). By Lemma~\ref{External graph functions preserve tame points}, the element \(c^*:=f^{\mathcal N}(a^*,b)\) is tame over \(M\).

Suppose that \(f(-,b)\) is strictly monotone on \(J_b\). Then \(h:J_b\to f(J_b,b)\), \(h(a)=f(a,b)\), is an \(\mathcal L(M)\)-definable bijection and has an \(\mathcal L(M)\)-definable inverse. In \(\mathcal N\), we have \(a^*=(h^{-1})^{\mathcal N}(c^*)\). Since \(c^*\) is tame over \(M\), every element of \(\dcl_{\mathcal N}(M,c^*)\) is tame over \(M\). This would imply that \(a^*\) is tame over \(M\), a contradiction. Therefore \(f(-,b)\) is constant on an open interval containing \(a^*\).

Set
\[
W:=\left\{(a,b)\in I\times V:\frac{\partial f}{\partial a}(a,b)=0\right\}.
\]
For each \(b\in V\), let \((\ell(b),u(b))\) be the maximal open interval containing \(a^*\) and contained in \(W_b^{\mathcal N}\). Since \(f(-,b)\) is locally constant at \(a^*\), this interval is nonempty. By o-minimality, after partitioning \(V\) into finitely many cells, the endpoint maps \(\ell:V\to M\cup\{-\infty\}\) and \(u:V\to M\cup\{+\infty\}\) are \(\mathcal L(M)\)-definable.

Set \(\ell_0:=\sup_{b\in V}\ell(b)\) and \(u_0:=\inf_{b\in V}u(b)\). By definable completeness, \(\ell_0,u_0\in M\cup\{-\infty,+\infty\}\). We claim that \(\ell_0<a^*<u_0\). Suppose that \(\ell_0>a^*\). Since \(a^*\) is non-tame over \(M\), there exists \(c\in M\) such that \(a^*<c<\ell_0\); otherwise \(a^*\) would be infinitesimally close to \(\ell_0\). By the definition of \(\ell_0\), there is \(b\in V\) such that \(\ell(b)>c\), contradicting \(\ell(b)<a^*\). Moreover, \(\ell_0\neq a^*\), since \(\ell_0\in M\cup\{-\infty\}\) and \(a^*\notin M\). Hence \(\ell_0<a^*\). The proof that \(a^*<u_0\) is symmetric.

Let \(J:=(\ell_0,u_0)\cap I\). Then \(J\) is an \(\mathcal L(M)\)-definable open interval, \(a^*\in J^{\mathcal N}\), and \(J\subseteq(\ell(b),u(b))\) for every \(b\in V\). Hence \(\partial f/\partial a=0\) on \(J\times V\), so \(f(-,b)\) is constant on \(J\) for every \(b\in V\). Therefore \(A_{a_1}=A_{a_2}\) for all \(a_1,a_2\in J\).

Now suppose that \(A=(f_0,f_1)_U\). Fix \(b\in V\). By Lemma~\ref{External band functions preserve tame points}, both \(f_0^{\mathcal N}(a^*,b)\) and \(f_1^{\mathcal N}(a^*,b)\) are tame over \(M\). Applying the preceding monotonicity and inverse-function argument separately to \(f_0(-,b)\) and \(f_1(-,b)\), we find an \(\mathcal L(M)\)-definable open interval containing \(a^*\) on which both functions are constant.

Set
\[
W:=\left\{(a,b)\in I\times V:\frac{\partial f_0}{\partial a}(a,b)=\frac{\partial f_1}{\partial a}(a,b)=0\right\}.
\]
For each \(b\in V\), let \((\ell(b),u(b))\) be the maximal open interval containing \(a^*\) and contained in \(W_b^{\mathcal N}\). Define \(\ell_0\) and \(u_0\) as above. The same supremum and infimum argument gives \(\ell_0<a^*<u_0\). Hence \(J=(\ell_0,u_0)\cap I\) is an \(\mathcal L(M)\)-definable open interval containing \(a^*\), and both boundary functions are constant with respect to the parameter on \(J\times V\). Consequently, \(A_{a_1}=A_{a_2}\) for all \(a_1,a_2\in J\).
\end{proof}

We now pass from one parameter to a tuple of parameters. Let \(a^*\in(A')^{\mathcal N}\), and choose a \(\dcl\)-basis \(b^*=(b_1^*,\ldots,b_\ell^*)\) of \(a^*\) over \(M\). Since \(a^*\in\dcl_{\mathcal N}(M,b^*)\), there is an \(\mathcal L(M)\)-definable function \(g\), defined on a box containing \(b^*\), such that \(g^{\mathcal N}(b^*)=a^*\). Replacing the original family by the pullback family \(\widetilde A_b:=A_{g(b)}\), we may therefore assume that the parameter tuple itself is a \(\dcl\)-basis over \(M\).

Write \(b^*=(u^*,v^*)\), where \(u^*\) is tame over \(M\), and, after naming \(u^*\), the tuple \(v^*=(v_1^*,\ldots,v_s^*)\) consists of the remaining non-tame coordinates. Set \(K:=M\langle u^*\rangle\). We work over the base structure \(\mathcal K\) induced on \(K\). The intervals below are \(\mathcal L(K)\)-definable.

\begin{lemma}\label{Local constancy in non-tame parameter directions}
Suppose that there are \(\mathcal L(K)\)-definable open intervals \(I_1,\ldots,I_s\subseteq K\) such that \(v^*\in I_1^{\mathcal N}\times\cdots\times I_s^{\mathcal N}\), and such that the projected fibers \(U_v\) are constant for \(v\in I_1\times\cdots\times I_s\). Then there are \(\mathcal L(K)\)-definable open intervals \(J_i\subseteq I_i\), for \(i=1,\ldots,s\), such that \(v^*\in J_1^{\mathcal N}\times\cdots\times J_s^{\mathcal N}\), and
\[
A_{(u^*,v)}^{\mathcal N}=A_{(u^*,w)}^{\mathcal N}
\]
for all \(v,w\in J_1\times\cdots\times J_s\).
\end{lemma}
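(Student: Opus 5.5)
The plan is to run the one-parameter argument of Lemma~\ref{Removing one non-tame parameter} over the base \(\mathcal K\), one non-tame coordinate at a time, by induction on \(s\). Two facts make this possible. First, \(\mathcal K=M\langle u^*\rangle\preceq\mathcal N\) is a model of \(T\), and each \(v_k^*\) stays non-tame over \(K\). Indeed, if some \(v_k^*\) were infinitesimally close to an element of \(K\), then, since \(u^*\) is tame over \(M\) and \(\dcl_{\mathcal N}(M,u^*)\) consists of \(M\)-tame elements, \(v_k^*\) would be tame over \(M\), contrary to the choice of \(v^*\). Second, Lemmas~\ref{External graph functions preserve tame points} and \ref{External band functions preserve tame points} say that the boundary functions evaluated at \((u^*,v^*,b)\), for \(b\) in the constant projected fiber \(V=U_v\), are tame over \(M\), hence tame over \(K\) (using \(M\subseteq K\)).

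For the inductive step, I would fix the coordinates \(v_2,\dots,v_s\), or rather work generically in them. Consider the \(\mathcal L(K)\)-definable function \(v_1\mapsto f(u^*,v_1,v_2^*,\dots,v_s^*,b)\). This is only \(\mathcal L(K\langle v_2^*,\dots,v_s^*\rangle)\)-definable, so I would instead use the \(\dcl\)-independence of \(b^*\). Since \(v_1^*\notin\dcl(K,v_2^*,\dots,v_s^*)\), the monotonicity trichotomy can be applied over the larger base \(K_1:=K\langle v_2^*,\dots,v_s^*\rangle\), in which \(v_1^*\) is still non-tame. If \(f(\cdot,b)\) were strictly monotone in \(v_1\) near \(v_1^*\), then inverting would place \(v_1^*\) in \(\dcl(K_1,c^*)\) with \(c^*\) tame over \(M\). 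That contradicts the independence of the basis: \(c^*\) is infinitesimally close to \(\st_M(c^*)\in M\), and the argument of Lemma~\ref{Removing one non-tame parameter} transfers. So \(\partial f/\partial v_1\) vanishes at \((u^*,v^*,b)\) for every \(b\in V\), and likewise for \(f_0,f_1\) in the band case.

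Next I would turn this pointwise vanishing into a box. The set
\[
W=\{(v,b)\in I_1\times\cdots\times I_s\times V:\ \partial_{v_k}f(u^*,v,b)=0\ \text{for all }k\}
\]
is \(\mathcal L(K)\)-definable. By the previous step, the fiber \(W^{v^*}\) over \(v^*\) is all of \(V^{\mathcal N}\). The statement ``\(\{v:W^{v}=V\}\)'' is an \(\mathcal L(K)\)-definable set containing \(v^*\), and in fact containing an open neighbourhood of \(v^*\), because the local-constancy argument applies equally to any \(v\) that realizes the same type as \(v^*\) over \(K\). Since \(v^*\) is a \(\dcl\)-independent tuple over \(K\), its type is generic, so it lies in the interior of this \(K\)-definable set. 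Cell decomposition over \(K\) then gives a \(K\)-definable open box \(J_1\times\cdots\times J_s\ni v^*\) on which every \(\partial_{v_k}f\) vanishes on \(J\times V\). Because the box is connected and \(V\) is fixed, \(f(u^*,\cdot,b)\) is constant on it, so \(A_{(u^*,v)}=A_{(u^*,w)}\) there, which transfers to \(\mathcal N\).

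The main obstacle is the contradiction in the monotone case. One has to make sure that tameness of \(c^*\) over \(M\), combined with \(v_1^*\in\dcl(K_1,c^*)\), really contradicts the setup. The worry is that \(K_1\) contains non-tame elements, so \(\dcl(K_1,c^*)\) need not consist of tame elements. My first attempt would be to replace \(c^*\) by \(\st_M(c^*)+\epsilon\) with \(\epsilon\) infinitesimal, and to use that tame elements lie in the convex hull–residue part. An exchange argument would then force \(v_1^*\) into \(\dcl(K_1,\epsilon)\) in a way incompatible with the valuation-theoretic rank (the residue field and value group dimension) of the independent non-tame tuple. If that fails, I would fall back to the sup/inf argument of Lemma~\ref{Removing one non-tame parameter}, applied coordinate by coordinate with the other coordinates treated as \(K\)-definable functions of \(v_1\).
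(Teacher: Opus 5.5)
\textbf{The monotone case is the gap.} Your whole argument depends on ruling out that \(v_1\mapsto f(u^*,v_1,v_2^*,\dots,v_s^*,b)\) is strictly monotone near \(v_1^*\), and the reason you give does not work. From \(v_1^*\in\dcl(K_1,c^*)\) with \(c^*\) tame over \(M\), exchange only shows that \((u^*,c^*,v_2^*,\dots,v_s^*)\) is another \(\dcl\)-basis of the same tuple. Independence is not violated, and since \(K_1\) contains non-tame elements there is no tameness contradiction either.

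In fact the monotone case really occurs, so neither your valuation-rank idea nor the sup/inf fallback can close the gap. Take \(M\) non-archimedean, \(H\in M\) infinite, \(n=1\), and \(A=\{(v_1,v_2,v_1-v_2):(v_1,v_2)\in(0,H)^2\}\). Let \(a_i=(x_i+\epsilon_i,x_i)\), where \(\epsilon_i\downarrow 0\) is coinitial in \(M^{>0}\), and \(x_i\in\mathbb N\) and \(\mathcal U\) are chosen so that \(v_2^*=[x_i]_{\mathcal U}\) realizes the cut just above all finite elements. The fibers \(\{\epsilon_i\}\) converge to \(\{0\}\), and \(v_1^*=v_2^*+\epsilon^*\) with \(\epsilon^*\) a positive \(M\)-infinitesimal.

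The model \(M\langle v_2^*\rangle\) contains no positive \(M\)-infinitesimal \(e\). Otherwise exchange would put \(v_2^*\) in \(\dcl(M,e)\), with \(e\) realizing the definable type \(0^+\), and then \(\operatorname{tp}(v_2^*/M)\) would be definable, which it is not. Hence \(\epsilon^*\notin\dcl(M,v_2^*)\), so \((v_1^*,v_2^*)\) is a legitimate \(\dcl\)-basis with no tame coordinate. Then \(K=M\), and the hypothesis on projected fibers holds trivially. Yet \(v_1\mapsto v_1-v_2^*\) is strictly monotone with tame value \(\epsilon^*\), and \(v_1-v_2\) is constant on no box, so the conclusion fails for this basis.

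What is missing is a normalization of the splitting \(b^*=(u^*,v^*)\); here one must use the basis \((\epsilon^*,v_2^*)\), moving \(\epsilon^*\) into \(u^*\). The paper's proof takes essentially your route: it applies Lemma~\ref{Removing one non-tame parameter} to each \(v_i^*\) over the structure naming \(u^*\) and the other coordinates, and simply asserts that \(v_i^*\) is non-tame there. In the example, \(v_1^*\) is infinitesimally close to \(v_2^*\) relative to \(M\langle v_2^*\rangle\), hence tame over it. So the paper does not supply this step either.

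\textbf{Three further steps are wrong as stated.}
\begin{enumerate}
\item Tame over \(M\) does not imply tame over \(K\). If \(u^*=\delta\) is a positive \(M\)-infinitesimal and \(x\) lies above all finite and below all infinite positive elements of \(K\), then \(\delta x\) is \(M\)-infinitesimal but realizes a non-principal cut of \(K\).
\item Lemmas~\ref{External graph functions preserve tame points} and~\ref{External band functions preserve tame points} only concern points \(b\) tame over \(M\). So even granting the monotone case, you get vanishing of the \(v\)-derivatives only at such \(b\), whereas \(v^*\in\{v:W^v=V\}\) needs \(W^{v^*}=V^{\mathcal N}\).
\item Around a generic \(v^*\), cell decomposition over \(K\) gives an open \(\mathcal L(K)\)-definable cell, not a box once \(s\geq 2\). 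For instance, \(\{v_1<v_2\}\) contains no \(\mathcal L(K)\)-definable box around \(v^*\) when \(v_1^*<v_2^*\) realize the same cut of \(K\).
\end{enumerate}
An open cell would be enough for Corollary~\ref{Hausdorff limits have tame external representatives}, but only after the monotone case and the passage to all \(b\in V^{\mathcal N}\) have been dealt with.
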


\begin{proof}
We apply Lemma~\ref{Removing one non-tame parameter} successively to the coordinates of \(v^*\). Fix \(i\in\{1,\ldots,s\}\), keep the remaining coordinates fixed, and regard the defining functions of the cell as functions of \(v_i\) and the fiber variable. Over the structure obtained by naming \(u^*\) and the remaining coordinates of \(v^*\), the coordinate \(v_i^*\) is non-tame, while the projected fiber is constant on \(I_i\). Lemma~\ref{Removing one non-tame parameter} gives an interval \(J_i\subseteq I_i\), containing \(v_i^*\) in its interpretation, on which the entire fiber is constant with respect to \(v_i\). Repeating this for \(i=1,\ldots,s\), and shrinking the intervals already obtained when necessary, gives a product box on which the fiber is independent of every coordinate of \(v\). Hence the conclusion follows.
\end{proof}

We now use the preceding preparation inductively on the fiber dimension.

\begin{lemma}\label{Hausdorff limits are locally constant in nontame directions}
Using the notation above, there are \(\mathcal L(K)\)-definable open intervals \(J_1,\ldots,J_s\) such that \(v^*\in J_1^{\mathcal N}\times\cdots\times J_s^{\mathcal N}\), and
\[
A_{(u^*,v)}^{\mathcal N}=A_{(u^*,w)}^{\mathcal N}
\]
for all \(v,w\in J_1^{\mathcal N}\times\cdots\times J_s^{\mathcal N}\).
\end{lemma}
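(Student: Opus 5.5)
We argue by induction on the fiber dimension \(n\), following the recursive cell presentation of the fibers \(A_{(u^*,v)}\). The aim at each stage is to reduce to the hypothesis of Lemma~\ref{Local constancy in non-tame parameter directions}. That hypothesis asks that the projected fibers \(U_{(u^*,v)}\) be constant on a box of \(\mathcal L(K)\)-definable intervals around \(v^*\). Once it holds, that lemma yields the intervals \(J_1,\ldots,J_s\). Note that both the hypothesis and the conclusion are first-order statements about \(v\) with parameters in \(K\). So constancy for \(v,w\) ranging over the \(K\)-points of the box transfers, by elementarity of \(\mathcal K\preceq\mathcal N\), to all \(v,w\in J_1^{\mathcal N}\times\cdots\times J_s^{\mathcal N}\). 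This is the form stated here.

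For the base case \(n=1\), the "projected fiber" is a point (\(M^0\)), so the constancy hypothesis is vacuous; we may take \(I_i\) to be the coordinate intervals of the box containing \(v^*\) on which the family is defined. Lemma~\ref{Local constancy in non-tame parameter directions} then applies directly. The one-dimensional versions of Lemmas~\ref{External graph functions preserve tame points} and~\ref{External band functions preserve tame points} supply the tameness of the endpoint values. This tameness is what feeds the monotonicity/inverse-function step inside Lemma~\ref{Removing one non-tame parameter}.

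For the inductive step, we let \(U\) be the base family one level down, so that \(U_a=\Pi_{n-1}(A_a)\). We first check that \(U\) satisfies the standing hypotheses: it is a bounded fiberwise \(\lambda\)-Lipschitz \(\mathcal C^p\) cell over the same parameter space. Moreover, \((U_{a_i})\) is Hausdorff-Cauchy, because projection is \(1\)-Lipschitz. Its external fiber at the same ultraproduct parameter is the projection of \(A_{a^*}^{\mathcal N}\). The decomposition \(b^*=(u^*,v^*)\) into tame and non-tame coordinates depends only on the parameter, not on the family. Hence the inductive hypothesis applied to \(U\) gives \(\mathcal L(K)\)-definable intervals \(I_1,\ldots,I_s\) with \(v^*\) in their product on which \(U_{(u^*,v)}\) is constant. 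Lemma~\ref{Local constancy in non-tame parameter directions} then produces \(J_i\subseteq I_i\) on which the whole fiber \(A_{(u^*,v)}\) is constant.

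The main obstacle I expect is justifying that the inductive hypothesis genuinely applies to \(U\) over the base \(\mathcal K\) rather than \(\mathcal M\). The earlier tameness lemmas were proved over the dense complete \(\mathcal M\), using the sequence \((a_i)\). Lemma~\ref{Removing one non-tame parameter} is then invoked over \(K=M\langle u^*\rangle\), with the other coordinates named. My plan is to verify two things. First, "tame over \(M\)" implies "tame over the named structure" in the sense needed: the element \(f^{\mathcal N}(a^*,b)\) is infinitesimally close to an element of \(M\subseteq K\). Second, \(v_i^*\) stays non-tame over \(K\) together with the other named coordinates. This second point should follow from \(b^*\) being a \(\dcl\)-basis: if \(v_i^*\) were infinitesimally close to an element of that structure, we could use the same inverse-function argument as in Lemma~\ref{Removing one non-tame parameter}. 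If this transfer fails, I would fall back on redoing the base and inductive steps directly over \(\mathcal M\), with \(u^*\) treated as an extra tame parameter.
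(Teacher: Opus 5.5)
Your argument is essentially the paper's proof. Both run an induction on the fiber dimension and pass to the projected family \(U=\Pi_{m+n-1}(A)\), which is Hausdorff-Cauchy because projection is \(1\)-Lipschitz and is represented by the same ultraproduct parameter \(a^*\). Both then take \(I_1,\ldots,I_s\) from the inductive hypothesis and apply Lemma~\ref{Local constancy in non-tame parameter directions} to the last graph or band layer. Starting the induction at \(n=1\) instead of \(n=0\), and your elementarity remark transferring constancy from \(K\)-points to \(\mathcal N\)-points, are only minor differences.

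The worry in your last paragraph concerns the internals of Lemmas~\ref{Removing one non-tame parameter} and~\ref{Local constancy in non-tame parameter directions}, which the paper also uses here as black boxes, so it is not needed for this step. Be aware, though, that \(\dcl\)-independence alone does not keep \(v_i^*\) non-tame once the other coordinates are named: take \(v_2^*=v_1^*+\varepsilon\) with \(\varepsilon\) infinitesimal over \(M\langle v_1^*\rangle\). So that proposed verification would fail as stated.
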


\begin{proof}
We proceed by induction on the fiber dimension \(n\). The case \(n=0\) is immediate. Suppose \(n>0\), and set \(U:=\Pi_{m+n-1}(A)\). Since coordinate projection is \(1\)-Lipschitz, the projected sequence \((U_{a_i})_{i<\kappa}\) is Cauchy. Moreover, the same cofinal-ultraproduct parameter \(a^*\) represents its Hausdorff limit as the standard part of the tame part of \(U_{a^*}^{\mathcal N}\).

Apply the inductive hypothesis to the projected family \(U\). After retaining the tame tuple \(u^*\) and the non-tame tuple \(v^*\), there are \(\mathcal L(K)\)-definable open intervals \(I_1,\ldots,I_s\) such that \(v^*\in I_1^{\mathcal N}\times\cdots\times I_s^{\mathcal N}\), and the projected fibers \(U_{(u^*,v)}^{\mathcal N}\) are constant for \(v\in I_1^{\mathcal N}\times\cdots\times I_s^{\mathcal N}\). Lemma~\ref{Local constancy in non-tame parameter directions} now applies to the final graph or band layer of \(A\), and gives smaller intervals \(J_i\subseteq I_i\) with the required property.
\end{proof}

The preceding lemma allows us to replace every non-tame parameter direction by an element of the tame base.

\begin{corollary}\label{Hausdorff limits have tame external representatives}
There exists \(c^*\in(A')^{\mathcal N}\), tame over \(M\), such that \(A_{c^*}^{\mathcal N}=A_{a^*}^{\mathcal N}\). Consequently,
\[
B=\st_M(A_{c^*}^{\mathcal N}).
\]
\end{corollary}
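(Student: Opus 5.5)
The plan is to deduce the corollary directly from Lemma~\ref{Hausdorff limits are locally constant in nontame directions}, together with the reduction to a $\dcl$-basis parameter made just before it. As arranged there, I work with the pulled-back family, so the parameter is $b^*=(u^*,v^*)$. Here $u^*$ is tame over $M$, and $v^*$ is non-tame over $K=M\langle u^*\rangle$.

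The lemma gives $\mathcal L(K)$-definable open intervals $J_1,\dots,J_s$ with $v^*\in J_1^{\mathcal N}\times\cdots\times J_s^{\mathcal N}$, on which the external fiber $A^{\mathcal N}_{(u^*,v)}$ does not depend on $v$. The idea is to replace $v^*$ by a tame point of this box.

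The key step is to choose $w\in J_1\times\cdots\times J_s$ whose coordinates lie in $K$ and are tame over $M$. Each $J_i$ is an open interval with endpoints in $K\cup\{\pm\infty\}$, which are $\mathcal L(M)$-definable from $u^*$. I would pick a point of $J_i$ that is $M$-bounded and not infinitesimally separated from $M$:
\begin{itemize}
\item if $J_i$ contains an element of $M$, take that element;
\item otherwise $J_i$ lies inside a single gap of $M$ in $N$, and I would take the midpoint of $J_i$, or an endpoint plus or minus $1$ when $J_i$ is unbounded.
\end{itemize}
The main obstacle is to show that, in the second case, the chosen point is still tame. This should follow because the endpoints of $J_i$ lie in $\dcl(M,u^*)$. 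Since $u^*$ is tame, I expect every element of $\dcl(M,u^*)$ that is $M$-bounded to be tame as well. This is the same fact used in the proof of Lemma~\ref{Removing one non-tame parameter}, which relies on the tameness of $M\langle u^*\rangle$ over $M$ in the sense of van den Dries--Lewenberg, using that $\mathcal M$ is dense complete. Granting this, a midpoint of two tame endpoints is tame.

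If an endpoint is infinite with respect to $M$, I would argue as follows. An interval containing the non-tame $v_i^*$ but no point of $M$ must have both endpoints in the same gap of $M$. If that gap is the one at infinity, I would use instead the fact that constancy of the fiber is a first-order property over $K$. It then holds on the largest interval around $v_i^*$, whose endpoints are in $K$, and I would check that this interval reaches an element of $M$ or a tame element.

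Having chosen $w$, set $c^*:=(u^*,w)$, or its image $g(c^*)$ in the original family. Then $c^*$ is tame, since it is a definable image of tame elements under a map continuous at the relevant point; if necessary I would choose $w$ close enough to guarantee this. The lemma then gives $A^{\mathcal N}_{c^*}=A^{\mathcal N}_{(u^*,v^*)}=A^{\mathcal N}_{a^*}$. Finally, $B=\st_M(A^{\mathcal N}_{a^*})$ by Lemma~\ref{Hausdorff limits are standard parts of external fibers}, since $B$ is closed, and so $B=\st_M(A^{\mathcal N}_{c^*})$.
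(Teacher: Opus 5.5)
\textbf{There is a genuine gap at the step you call the main obstacle.} Your route is the paper's route: apply Lemma~\ref{Hausdorff limits are locally constant in nontame directions}, pick a point \(w\) of the box \(J_1\times\cdots\times J_s\), set \(c^*=(u^*,w)\), and finish with Lemma~\ref{Hausdorff limits are standard parts of external fibers}. The paper takes an arbitrary \(c\) in the box and infers that \(c^*\) is tame merely from \(c\in K^s\). That inference is not valid as stated: if \(u_1^*\) is a coordinate of \(u^*\), then \(1/(u_1^*-\st_M(u_1^*))\in K\) is \(M\)-infinite. So you are right to single out the choice of \(w\) as the crux. However, your argument for that step does not work.

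\textbf{The second case.} No choice of point can handle the case \(J_i\cap M=\varnothing\). In that case \(J_i^{\mathcal N}\) is convex and contains \(v_i^*\) but no element of \(M\). So every point of it, the midpoint included, realizes the same cut over \(M\) as \(v_i^*\). Tameness over \(M\) depends only on the cut over \(M\). Hence every point of \(J_i\) is non-tame. What you need is to show that this case never occurs. For example, if both endpoints were tame with standard parts \(s_1\leq s_2\), then either \((s_1+s_2)/2\in J_i\cap M\), or \(v_i^*\) would be infinitesimally close to \(s_1\).

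\textbf{The fact you rely on fails for tuples.} To get tame endpoints you use that \(M\)-bounded elements of \(\dcl(M,u^*)\) are tame. This holds for a single tame element, whose type over \(M\) is definable; that is what the proof of Lemma~\ref{Removing one non-tame parameter} uses. It fails for tuples, even when \(\mathcal M\) is dense complete. Take \(\mathcal M\) non-Archimedean, let \(\epsilon>0\) be \(M\)-infinitesimal, and let \(y\in(0,1)\) realize the cut of \(M\) whose lower part is \(\{x\in M:x<1/k\text{ for all }k\in\mathbb N^{>0}\}\). Then \(u^*=(\epsilon,\epsilon y)\) is tame. Yet \(y=(\epsilon y)/\epsilon\in\dcl(M,u^*)\) is \(M\)-bounded and not tame. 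Moreover, \((y-\epsilon,y+\epsilon)\) is an \(\mathcal L(K)\)-definable interval containing no point of \(M\). Nothing in the statement of the lemma, which only provides \(\mathcal L(K)\)-definable intervals, excludes such a \(J_i\).

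\textbf{What would close the gap.} You need one of the following:
\begin{enumerate}
\item intervals with endpoints in \(M\), as produced in the one-parameter lemma where \(J=(\ell_0,u_0)\cap I\);
\item a proof that \(M\langle u^*\rangle\) is a tame extension of \(M\), together with \(M\)-boundedness of \(v^*\). You get the latter by choosing the \(\dcl\)-basis among the coordinates of \(a^*\in(A')^{\mathcal N}\).
\end{enumerate}
Either way, \(J_i\cap M\neq\varnothing\), and you can take \(w\in M^s\). Then \(c^*=(u^*,w)\) is tame coordinatewise.

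\textbf{Smaller unsupported steps.} Your remark about the gap at infinity gives no reason why the maximal \(\mathcal L(K)\)-definable interval of constancy should reach \(M\). Your appeal to continuity of \(g\) for \(g(c^*)\) is also unjustified, since \(g\) need not be continuous, or even defined, at the standard part of \((u^*,w)\).
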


\begin{proof}
Choose \(c\in J_1\times\cdots\times J_s\), and set \(c^*:=(u^*,c)\). Since \(c\in K^s=M\langle u^*\rangle^s\) and \(u^*\) is tame over \(M\), the tuple \(c^*\) is tame over \(M\). By Lemma~\ref{Hausdorff limits are locally constant in nontame directions}, \(A_{c^*}^{\mathcal N}=A_{a^*}^{\mathcal N}\). The final equality follows from Lemma~\ref{Hausdorff limits are standard parts of external fibers}, after replacing \(B\) by its closure as above.
\end{proof}

\section{Definability of Hausdorff Limits}

We now prove the definability theorem for bounded fiberwise Lipschitz cells. Throughout this section, \(A\subseteq M^{m+n}\) is a bounded \(\mathcal C^p\) Lipschitz cell whose fibers have a fixed cell presentation with a uniform Lipschitz bound.

\begin{theorem}\label{Hausdorff limits of Lipschitz cells are definable}
Suppose that \(\mathcal M\) is dense complete. Then every Hausdorff limit of fibers from \(A\) is \(\mathcal L(M)\)-definable.
\end{theorem}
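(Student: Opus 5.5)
Given a Hausdorff limit \(B\) of fibers \(A_{a_i}\), we first replace \(B\) by \(\cl(B)\). By Lemma~\ref{Hausdorff distance zero iff closures agree}, the two sets have Hausdorff distance zero, so by Lemma~\ref{Hausdorff limits unique up to zero distance} the closed set is still a limit of the same sequence. Definability of \(\cl(B)\) is what we aim for; if the statement is read literally for an arbitrary, possibly non-closed limit \(B\), that is hopeless, since Hausdorff limits are only determined up to closure. So the theorem should be understood as asserting that the closed Hausdorff limit is \(\mathcal L(M)\)-definable. By Lemma~\ref{Hausdorff limits are standard parts of external fibers} there are \(\mathcal N\succ\mathcal M\) and \(a^*\) with \(B=\st_M(A_{a^*}^{\mathcal N})\). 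Corollary~\ref{Hausdorff limits have tame external representatives} then supplies \(c^*\in(A')^{\mathcal N}\), tame over \(M\), with \(B=\st_M(A_{c^*}^{\mathcal N})\).

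Next we pass to a tame pair. Let \(\mathcal N_0:=M\langle c^*\rangle\), the definable closure in \(\mathcal N\), which is an elementary substructure of \(\mathcal N\) because \(T\) has definable Skolem functions. We check that \(\mathcal N_0\) is a tame extension of \(\mathcal M\). Every element of \(N_0\) has the form \(h(c^*)\) with \(h\) an \(\mathcal L(M)\)-definable function. Since \(c^*\) is tame, it is infinitesimally close to \(c=\st_M(c^*)\in M^m\). An \(M\)-bounded value \(h(c^*)\) defines a cut over \(M\); using monotonicity or continuity of \(h\) on a cell near \(c\) (or on one side of it), we show this cut is either principal or dense. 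Dense completeness of \(\mathcal M\) then yields a standard part in \(M\). This is the same device used in Lemma~\ref{External graph functions preserve tame points}. Alternatively, one can invoke the van den Dries--Lewenberg fact that \(M\langle c^*\rangle\) is tame over \(M\) once each coordinate of \(c^*\) realizes a principal or dense cut, and handle the coordinates one at a time.

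The main step is then a definability computation inside the tame pair. In \((\mathcal N_0,\mathcal M,\st_M)\), the set
\[
\st_M(A_{c^*}^{\mathcal N_0})=\{x\in U^n:\exists y\,(y\in A_{c^*}\wedge |y|\text{ is }M\text{-bounded}\wedge \st(y)=x)\}
\]
is \(\mathcal L_{\mathrm{tame}}\)-definable with parameter \(c^*\). Here \(M\)-boundedness is expressed as \(\exists r\,(U(r)\wedge|y|<r)\), and \(\st\) is applied coordinatewise. By Fact~\ref{Stable embeddedness of tame pairs} this subset of \(M^n\) is \(\mathcal L(M)\)-definable.

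It remains to check that taking the standard part inside \(\mathcal N_0\) rather than \(\mathcal N\) does not change the set. For the inclusion of \(\st_M(A_{c^*}^{\mathcal N})\) into the \(\mathcal N_0\)-version, let \(x\in\st_M(A_{c^*}^{\mathcal N})\). For each \(r\in M^{>0}\), the statement \(\exists y\in A_{c^*}\,(d(y,x)<r)\) has parameters in \(N_0\) and holds in \(\mathcal N\), hence holds in \(\mathcal N_0\). Definable choice in \(\mathcal N_0\) gives a curve \(r\mapsto y(r)\) defined over \(M\langle c^*\rangle\). We then need a single \(y\in A_{c^*}^{\mathcal N_0}\) infinitesimally close to \(x\). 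I expect this to be the main obstacle. The plan is to take the point \(y\) lying above \(x\) in the fiber, namely the point whose base coordinates equal those of \(x\), using the Lipschitz cell structure and Lemmas~\ref{External graph functions preserve tame points}--\ref{External band functions preserve tame points} recursively on the base cells. Alternatively, pass to an \(|M|^+\)-saturated \(\mathcal N\) and argue that \(\st_M\) of a set definable over \(N_0\) is computed already in any tame elementary extension, because the set \(\{x: d(x,A_{c^*})\text{ is infinitesimal}\}\) is determined by the formulas \(\exists y\in A_{c^*}\,(d(y,x)<r)\), which transfer between \(\mathcal N_0\) and \(\mathcal N\). The reverse inclusion is immediate since \(\mathcal N_0\preceq\mathcal N\). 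Finally, \(B=\cl(B)\) is \(\mathcal L(M)\)-definable.
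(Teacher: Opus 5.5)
Your route is the paper's: pass to \(\cl(B)\), take the tame representative \(c^*\) from Corollary~\ref{Hausdorff limits have tame external representatives}, define \(B\) in the tame pair over \(\dcl(M,c^*)\), and apply Fact~\ref{Stable embeddedness of tame pairs}. You are also right that only the closed limit can be definable. Two of your justifications, however, fail as written.

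The first is the tameness of \(\mathcal N_0=M\langle c^*\rangle\). Infinitesimal closeness of \(c^*\) to a point of \(M^m\) does not make \(M\langle c^*\rangle\) a tame extension once \(m\geq 2\). Here is a counterexample for non-Archimedean \(M\):
\begin{itemize}
\item Let \(t\) realize the cut lying above the convex hull of \(\mathbb Q\) in \(M\) and below all positive \(M\)-infinite elements. This cut is non-principal and has \(1\) in its invariance group.
\item Let \(\epsilon>0\) be infinitesimal over \(M\langle t\rangle\).
\item Then \(c^*=(\epsilon,\epsilon t)\) is infinitesimally close to \((0,0)\), but \(t=(\epsilon t)/\epsilon\in M\langle c^*\rangle\) is \(M\)-bounded with no standard part.
\end{itemize}
The function \(y/x\) is continuous on a cell adjacent to \((0,0)\) yet has no limit there, so your monotonicity/continuity argument only covers \(m=1\). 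Your coordinatewise alternative needs each \(c_i^*\) to realize a principal or infinite cut over \(M\langle c_1^*,\dots,c_{i-1}^*\rangle\), not merely over \(M\). Also, a realized dense cut never yields a tame extension, so ``dense'' must be dropped. The paper does not re-derive tameness from closeness. Its proof takes it from the construction behind the Corollary, where \(c^*=(u^*,c)\) with \(c\in M\langle u^*\rangle\). Hence \(\dcl(M,c^*)=M\langle u^*\rangle\), and ``tame'' there means that this extension is tame. You should import tameness in that form.

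The second is the comparison of \(\st_M(A^{\mathcal N_0}_{c^*})\) with \(\st_M(A^{\mathcal N}_{c^*})\). This is a genuine issue, which the paper's proof passes over by simply writing \(A^{\mathcal K}_{c^*}\). But the equality you aim for is false in general. If \(c^*\in M^m\) (e.g.\ for a constant sequence \(a_i=a\)), then \(\mathcal N_0=\mathcal M\) and \(\st_M(A^{\mathcal M}_{c^*})=A_{c^*}\). This set is not closed once there is a band layer, whereas \(B=\cl(A_{c^*})\); take \(A_a=(0,1)\) for all \(a\), so \(B=[0,1]\). Your first plan fails here, since there is no point of \(A_{c^*}\) over a frontier point. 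The example also refutes the claim in your second plan that \(\st_M\) is computed in any tame extension.

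The idea of your second plan does work if you use it as the definition of \(B\) in the pair \((\mathcal N_0,M)\):
\[
x\in U^n\wedge\forall r\,\bigl(U(r)\wedge r>0\rightarrow\exists y\,(y\in A_{c^*}\wedge d(y,x)<r)\bigr).
\]
For each fixed \(r\in M\), the inner existential has parameters in \(N_0\), so it holds in \(\mathcal N_0\) iff it holds in \(\mathcal N\). Since \(\mathcal N\) is \(|M|^+\)-saturated (proof of Lemma~\ref{Hausdorff limits are standard parts of external fibers}), the defined set is exactly \(\st_M(A^{\mathcal N}_{c^*})=B\). No point of \(A^{\mathcal N_0}_{c^*}\) is needed, and stable embeddedness then finishes the proof.
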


\begin{proof}
Let \(B\subseteq M^n\) be a Hausdorff limit of fibers from \(A\). Since replacing \(B\) by \(\cl(B)\) does not change the Hausdorff limit, we may assume that \(B\) is closed. By Corollary~\ref{Hausdorff limits have tame external representatives}, there are an elementary extension \(\mathcal N\succeq\mathcal M\) and \(c^*\in(A')^{\mathcal N}\), tame over \(M\), such that \(B=\st_M(A_{c^*}^{\mathcal N})\).

Set \(K:=\dcl_{\mathcal N}(M,c^*)\). Since \(c^*\) is tame over \(M\), the extension \(\mathcal K\succeq\mathcal M\) is tame. In the corresponding tame-pair structure, \(B\) is defined by
\(b\in B\) if and only if there is \(x\in A_{c^*}^{\mathcal K}\) such that \(\st_M(x)=b\). Thus \(B\subseteq M^n\) is definable in the tame pair with parameters from \(K\). By Fact~\ref{Stable embeddedness of tame pairs}, the small structure \(\mathcal M\) is stably embedded in the tame pair and the induced structure on \(M\) is its original \(\mathcal L\)-structure. Hence \(B\) is \(\mathcal L(M)\)-definable.
\end{proof}

We also need the converse observation: bounded external fibers over tame parameters produce Hausdorff limits.

\begin{lemma}\label{Tame external fibers give Hausdorff limits}
Let \(\mathcal K\succeq\mathcal M\) be tame over \(\mathcal M\), let \(c^*\in(A')^{\mathcal K}\), and suppose that \(A_{c^*}^{\mathcal K}\) is bounded over \(M\). Then \(\st_M(A_{c^*}^{\mathcal K})\) is a Hausdorff limit of fibers from \(A\).
\end{lemma}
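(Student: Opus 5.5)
Set \(B:=\st_M(A_{c^*}^{\mathcal K})\). The plan is to build a \(\kappa\)-sequence \((a_i)_{i<\kappa}\) in \(A'\) with \(A_{a_i}\xrightarrow{D}B\), taking each \(a_i\) to be an \(M\)-point that approximates \(c^*\) in a definable sense. By Fact~\ref{Stable embeddedness of tame pairs} (or Theorem~\ref{Hausdorff limits of Lipschitz cells are definable}), \(B\) is \(\mathcal L(M)\)-definable. It is closed, since the standard part of a set is always closed. Boundedness of \(A\) ensures that every point of \(A_{c^*}^{\mathcal K}\) is \(M\)-bounded. As \(\mathcal K\) is tame, every such point then has a standard part, so \(\tame_M(A_{c^*}^{\mathcal K})=A_{c^*}^{\mathcal K}\).

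\textbf{Key step: an approximation statement.} For each \(\varepsilon\in M^{>0}\) we want
\[
A_{c^*}^{\mathcal K}\subseteq U_d(B,\varepsilon)^{\mathcal K}\quad\text{and}\quad B\subseteq U_d(A_{c^*}^{\mathcal K},\varepsilon).
\]
Here \(U_d(B,\varepsilon)\) is \(\mathcal L(M)\)-definable because \(B\) is.

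\emph{First inclusion.} Every \(x\in A_{c^*}^{\mathcal K}\) is infinitesimally close to \(\st_M(x)\in B\), so \(x\) lies within \(\varepsilon\) of \(B\).

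\emph{Second inclusion.} Each \(b\in B\) is the standard part of some point of \(A_{c^*}^{\mathcal K}\), so that point lies within \(\varepsilon\) of \(b\).

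\emph{Transfer to \(M\).} Consider the \(\mathcal L(M)\)-definable set
\[
P_\varepsilon:=\{a\in A':A_a\subseteq U_d(B,\varepsilon)\ \text{and}\ B\subseteq U_d(A_a,\varepsilon)\}.
\]
The first inclusion is first order with parameter \(c^*\), so \(c^*\in P_\varepsilon^{\mathcal K}\). The second inclusion is not literally first order in \(\mathcal K\), because it quantifies only over \(B\subseteq M^n\) rather than over \(B^{\mathcal K}\). To handle this, replace \(\varepsilon\) by \(\varepsilon/2\) in the second clause and show that \(B^{\mathcal K}\subseteq U_d(A_{c^*}^{\mathcal K},\varepsilon)^{\mathcal K}\). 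This is the \textbf{main obstacle}.

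\emph{Attack on the obstacle.} First try uniform finiteness. Since \(B\) is closed and bounded, there is a definable finite \(\varepsilon/4\)-net \(F\subseteq B\), with \(F\subseteq M^n\). Such a net exists by definable choice applied to a decomposition of a bounded box into small cubes, taking one point of \(B\) from each cube that meets \(B\). For each \(p\in F\), the second inclusion gives a point of \(A_{c^*}^{\mathcal K}\) within \(\varepsilon/4\) of \(p\). Any \(y\in B^{\mathcal K}\) then lies within \(\varepsilon/4\) of some \(p\in F\), because the net property transfers to \(\mathcal K\) by elementarity. By the triangle inequality, \(y\) lies within \(\varepsilon/2\) of \(A_{c^*}^{\mathcal K}\). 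This shows \(c^*\in P_\varepsilon^{\mathcal K}\), and by elementarity \(P_\varepsilon\neq\varnothing\) in \(M\).

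\textbf{Building the sequence.} Fix a decreasing coinitial sequence \((\varepsilon_i)_{i<\kappa}\) in \(M^{>0}\). The sets \(P_{\varepsilon}\) shrink as \(\varepsilon\) decreases, so we may choose \(a_i\in P_{\varepsilon_i}\) for each \(i<\kappa\). Then \(D(A_{a_i},B)\le\varepsilon_i\), hence \(D(A_{a_i},B)\to0\) and \(A_{a_i}\xrightarrow{D}B\). This shows that \(B\) is a Hausdorff limit of fibers from \(A\).

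If the paper's ambient hypotheses are not used, the argument does not need dense completeness at all. If the finite-net step fails for some reason, the fallback is to argue directly in the tame pair: the statement "\(\forall y\in B^{\mathcal K}\), \(d(y,A_{c^*})<\varepsilon\)" follows from the fact that each \(y\in B^{\mathcal K}\) is \(M\)-bounded, hence tame, with \(\st_M(y)\in B\) because \(B\) is closed and definable.
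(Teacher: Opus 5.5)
Your overall route is the paper's. \(B\) is \(\mathcal L(M)\)-definable by Fact~\ref{Stable embeddedness of tame pairs}; citing Theorem~\ref{Hausdorff limits of Lipschitz cells are definable} instead would be circular, since \(B\) is not yet known to be a Hausdorff limit. Then \(c^*\) witnesses in \(\mathcal K\) the \(\mathcal L(M)\)-definable condition that a fiber is \(\varepsilon\)-close to \(B\), and elementarity plus a coinitial sequence finishes. You are right that the inclusion \(B^{\mathcal K}\subseteq U_d(A_{c^*}^{\mathcal K},\varepsilon)\) needs its own argument. The paper's remark that every point of \(A_{c^*}^{\mathcal K}\) is infinitesimally close to its standard part only yields the other half.

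Your primary fix, the finite \(\varepsilon/4\)-net, fails. Suppose \(M\) is non-Archimedean, and take \(B=[0,1]\) and \(\varepsilon\in M^{>0}\) with \(k\varepsilon<1\) for all \(k\in\mathbb N\). Any \(k\) balls of radius \(\varepsilon/4\) have total length at most \(k\varepsilon/2<1\). So no finite net exists, and no grid of cubes of side \(<\varepsilon/4\) on \([0,1]^n\) has finitely many cubes. This failure of total boundedness is exactly the non-Archimedean phenomenon the paper has to work around. An infinite definable net does not help either: \(F^{\mathcal K}\) then contains points outside \(M^n\), and your second inclusion says nothing about them. That is the original obstacle again.

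Your fallback, however, is correct and should be the main argument. Let \(y\in B^{\mathcal K}\).
\begin{enumerate}
\item \(y\) is \(M\)-bounded, so tameness gives \(y_0:=\st_M(y)\in M^n\).
\item \(y_0\in\cl(B)\). If \(B_d(y_0,r)\cap B=\varnothing\) for some \(r\in M^{>0}\), this \(\mathcal L(M)\)-statement transfers to \(\mathcal K\) and contradicts \(y\in B^{\mathcal K}\).
\item Choose \(b\in B\) with \(d(b,y_0)<\varepsilon/3\), and \(x\in A_{c^*}^{\mathcal K}\) with \(\st_M(x)=b\). Then \(d(x,y)<\varepsilon\).
\end{enumerate}

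Written with \(\cl(B)\) like this, the argument does not need your claim that ``the standard part of a set is always closed''. That claim is false as stated: for \(\mathcal K=\mathcal M\) one gets \(B=A_{c^*}\), an arbitrary cell. It does hold for \(\mathcal K\)-definable sets when \(\mathcal K\neq\mathcal M\), using a positive infinitesimal. With this replacement your proof is complete, and it supplies exactly the step the paper's proof leaves implicit.
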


\begin{proof}
Set \(B:=\st_M(A_{c^*}^{\mathcal K})\). Since \(\mathcal K\) is tame over \(\mathcal M\), the set \(B\) is definable in the tame pair \((\mathcal K,M)\) by saying that \(b\in B\) if and only if there is \(x\in A_{c^*}^{\mathcal K}\) such that \(\st_M(x)=b\). By Fact~\ref{Stable embeddedness of tame pairs}, \(B\) is \(\mathcal L(M)\)-definable.

Let \(\varepsilon\in M^{>0}\). Since \(A_{c^*}^{\mathcal K}\) is bounded over \(M\), every point of \(A_{c^*}^{\mathcal K}\) is infinitesimally close to its standard part in \(M^n\). Hence \(D^{\mathcal K}(A_{c^*}^{\mathcal K},B)<\varepsilon\). Therefore the \(\mathcal L(M)\)-sentence asserting that there is \(a\in A'\) with \(D(A_a,B)<\varepsilon\) holds in \(\mathcal K\), witnessed by \(c^*\). By elementarity, it holds in \(\mathcal M\). Thus for every \(\varepsilon\in M^{>0}\), there is \(a\in A'\) such that \(D(A_a,B)<\varepsilon\). Choosing such parameters along a coinitial sequence in \(M^{>0}\), we obtain a Hausdorff-convergent sequence of fibers with limit \(B\).
\end{proof}

We now pass to an arbitrary model by working in its dense completion.

\begin{theorem}\label{Hausdorff limits of Lipschitz cells are definable in dense completion}
Let \(\mathcal M\models T\), and let \(A\subseteq M^{m+n}\) be a bounded \(\mathcal C^p\) Lipschitz cell whose fibers have a fixed cell presentation with a uniform Lipschitz bound. Then every Hausdorff limit of fibers from \(A\) is definable in the dense completion \(\widehat{\mathcal M}\).
\end{theorem}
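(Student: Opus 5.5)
The plan is to reduce to Theorem~\ref{Hausdorff limits of Lipschitz cells are definable} by base change to the dense completion. Let \(\widehat{\mathcal M}\) be the dense completion of \(\mathcal M\) (Proposition~\ref{Existence of dense completion}), so \(\mathcal M\preceq\widehat{\mathcal M}\), \(M\) is dense in \(\widehat M\), and \(\widehat{\mathcal M}\) is dense complete. Set \(\widehat A:=A^{\widehat{\mathcal M}}\subseteq\widehat M^{m+n}\). By elementarity, \(\widehat A\) is again a bounded \(\mathcal C^p\) cell whose fibers have the same cell presentation. The statement that every boundary function of every fiber is \(\lambda\)-Lipschitz is first order with parameters in \(M\), so it also transfers. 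Theorem~\ref{Hausdorff limits of Lipschitz cells are definable} therefore applies to \(\widehat A\) over \(\widehat{\mathcal M}\): every Hausdorff limit in \(\widehat M^n\) of fibers of \(\widehat A\) is \(\mathcal L(\widehat M)\)-definable.

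It remains to show that a Hausdorff limit \(B\subseteq M^n\) of fibers \(A_{a_i}\), \(i<\kappa\), produces such a limit in \(\widehat M^n\). Since \(M\) is dense in \(\widehat M\), we have \(\ci(\widehat M^{>0})=\kappa\), so the same \(\kappa\)-sequences serve as test sequences. Put \(\widehat B:=\cl_{\widehat M^n}(B)\). I claim \(\widehat A_{a_i}\xrightarrow{D}\widehat B\) in \(\widehat M^n\), where \(\widehat A_{a_i}=A_{a_i}^{\widehat{\mathcal M}}\).

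The key point is that \(D\) is insensitive to passing to dense closures. Density of \(M\) gives \(A_{a_i}\) dense in \(\widehat A_{a_i}\), because for fixed \(a_i\) and \(\varepsilon\in M^{>0}\) the sentence ``every point of \(A_{a_i}\) with parameters close to \(x\) is \(\varepsilon\)-close to \(A_{a_i}\)'' can be checked via points of \(M^n\) near any \(x\in\widehat A_{a_i}\). Concretely, the cell \(\widehat A_{a_i}\) is the \(\widehat M\)-interpretation, and its \(M\)-points are \(\varepsilon\)-dense by o-minimal continuity of the boundary functions. Likewise \(B\) is dense in \(\widehat B\). Lemma~\ref{Hausdorff distance zero iff closures agree}, applied in \(\widehat M^n\), then gives \(D(\widehat A_{a_i},\widehat B)=D(A_{a_i},B)\) up to elements of \(M^{>0}\). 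In detail, \(\varepsilon\)-neighbourhood inclusions \(A_{a_i}\subseteq U(B,\varepsilon)\) and \(B\subseteq U(A_{a_i},\varepsilon)\) in \(M^n\) imply the corresponding \(2\varepsilon\)-inclusions in \(\widehat M^n\). Hence convergence in \(M^n\) transfers, and by Theorem~\ref{Hausdorff limits of Lipschitz cells are definable}, \(\widehat B\) is \(\mathcal L(\widehat M)\)-definable.

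Finally, \(B\) is recovered as a set definable in \(\widehat{\mathcal M}\) together with \(M\), but the statement asks for definability in \(\widehat{\mathcal M}\) itself. I read this as the assertion that the limit, regarded in the completion, i.e.\ \(\widehat B\), is \(\mathcal L(\widehat M)\)-definable, with \(B=\widehat B\cap M^n\) up to closure, consistent with the paper's convention of identifying limits with zero Hausdorff distance.

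The main obstacle I expect is the density step: showing that \(M\)-points are dense in \(\widehat A_{a_i}\). This requires the fibers to have nonempty interior relative to their cell structure, or for graphs, that \(M\)-points of the domain map into \(M\). I would prove it by induction on the cell presentation. For a graph, the Lipschitz boundary function with \(M\)-parameters maps \(M\)-points of the base to \(M\)-points, and the Lipschitz bound carries density of the base into density of the graph. For a band, points of \(M\) between the two boundary values are dense because \(M\) is dense in \(\widehat M\).
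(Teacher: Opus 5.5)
Your proposal is correct and follows the paper's route: interpret \(A\) in \(\widehat{\mathcal M}\), transfer the Hausdorff limit, and apply Theorem~\ref{Hausdorff limits of Lipschitz cells are definable}. Where the paper only asserts the transfer, you verify it via \(\widehat B=\cl_{\widehat M^n}(B)\) and the \(2\varepsilon\)-inclusions, and your reading of the conclusion as definability of \(\widehat B\) is the sensible one. The density step you flag as the main obstacle needs no cell induction: for \(x\in\widehat A_{a_i}\) and \(\varepsilon\in M^{>0}\), pick \(c\in M^n\) with \(d(x,c)<\varepsilon/2\); elementarity applied to \(\exists y\,(y\in A_{a_i}\wedge d(y,c)<\varepsilon/2)\) then yields a point of \(A_{a_i}\) within \(\varepsilon\) of \(x\).
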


\begin{proof}
Interpret \(A\) in \(\widehat{\mathcal M}\). Its interpretation remains a bounded \(\mathcal C^p\) Lipschitz cell whose fibers have the same fixed cell presentation and the same uniform Lipschitz bound. A Hausdorff limit of fibers with parameters from \(M\) is also a Hausdorff limit of fibers of the interpreted family with parameters from \(\widehat M\). Since \(\widehat{\mathcal M}\) is dense complete, Theorem~\ref{Hausdorff limits of Lipschitz cells are definable} implies that the limit is \(\mathcal L(\widehat M)\)-definable.
\end{proof}

The preceding theorem gives definability of each individual Hausdorff limit. We now adapt the compactness argument used in the classical case to obtain uniform definability.

\begin{theorem}\label{Hausdorff limits form definable family in dense completion}
Under the hypotheses of Theorem~\ref{Hausdorff limits of Lipschitz cells are definable in dense completion}, the Hausdorff limits of fibers from \(A\) form an \(\mathcal L(\widehat M)\)-definable family in \(\widehat{\mathcal M}\).
\end{theorem}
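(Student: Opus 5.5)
We work throughout in \(\widehat{\mathcal M}\), which is dense complete, and interpret \(A\) there as in the proof of Theorem~\ref{Hausdorff limits of Lipschitz cells are definable in dense completion}. So assume \(\mathcal M\) is dense complete, and show that the collection \(\mathcal H\) of closed Hausdorff limits of fibers of \(A\) is an \(\mathcal L(M)\)-definable family. The strategy is the classical compactness argument: produce a single elementary extension in which all limits are standard parts of external fibers. We then use Fact~\ref{Stable embeddedness of tame pairs} uniformly, together with Lemma~\ref{Tame external fibers give Hausdorff limits} for the converse inclusion.

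\emph{Step 1: a uniform tame representative.} Let \(\mathcal N\succ\mathcal M\) be \(|M|^+\)-saturated. By Lemma~\ref{Hausdorff limits are standard parts of external fibers} and Corollary~\ref{Hausdorff limits have tame external representatives}, every \(B\in\mathcal H\) has the form \(\st_M(A^{\mathcal N}_{c^*})\) for some \(c^*\in(A')^{\mathcal N}\) that is tame over \(M\). Since \(c^*\) has only finitely many coordinates, \(K=\dcl_{\mathcal N}(M,c^*)\) is a tame extension. We want all these representatives to lie in one fixed tame extension \(\mathcal K\succeq\mathcal M\). For this, choose \(\mathcal K\) to be a maximal tame extension of \(\mathcal M\) inside \(\mathcal N\), or alternatively \(|M|^+\)-saturated as a tame pair. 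By the theory of tame pairs, \(\dcl(M,c^*)\) then embeds into \(\mathcal K\) over \(M\), and the embedding commutes with \(\st_M\). So every \(B\in\mathcal H\) equals \(\st_M(A^{\mathcal K}_{c})\) for some \(c\in(A')^{\mathcal K}\). Conversely, by Lemma~\ref{Tame external fibers give Hausdorff limits}, every \(c\in(A')^{\mathcal K}\) yields an element \(\st_M(A^{\mathcal K}_c)\) of \(\mathcal H\). Here the fiber is automatically bounded over \(M\), since \(A\) is bounded. Hence
\[
\mathcal H=\{\st_M(A^{\mathcal K}_c):c\in(A')^{\mathcal K}\}.
\]

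\emph{Step 2: uniform definability.} Consider the \(\mathcal L_{\mathrm{tame}}\)-definable (over \(\emptyset\)) set
\[
E:=\{(c,b)\in (A')^{\mathcal K}\times M^n:\exists x\in A^{\mathcal K}_c\ \st(x)=b\}.
\]
Each fiber \(E_c\) is \(\mathcal L(M)\)-definable by Fact~\ref{Stable embeddedness of tame pairs}. We need more than that, namely a single \(\mathcal L(M)\)-formula \(\psi(y,b)\) whose instances, as \(y\) ranges over \(M^k\), give exactly the sets \(E_c\). Uniform stable embeddedness of tame pairs supplies this: the theory of tame pairs is complete and the induced structure on \(U\) is the \(\mathcal L\)-structure, so a compactness argument in a saturated tame pair gives finitely many \(\mathcal L\)-formulas \(\psi_1,\dots,\psi_k\) such that every \(E_c\) is an instance of one of them. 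Coding the finitely many \(\psi_j\) into one formula with extra parameters, we set
\[
\mathcal H=\{\psi(M,y):y\in Y\},\quad\text{where } Y:=\{y\in M^k:\psi(M,y)\in\mathcal H\}.
\]

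\emph{Step 3: definability of the index set.} The membership condition defining \(Y\) must itself be first order. By Lemma~\ref{Tame external fibers give Hausdorff limits} and Step 1, \(\psi(M,y)\in\mathcal H\) holds if and only if, for every \(\varepsilon\in M^{>0}\), there is \(a\in A'\) with \(D(A_a,\psi(M,y))<\varepsilon\), and \(\psi(M,y)\) is closed. Both fibers here are definable. For definable sets, \(D(X,Y)<\varepsilon\) is expressible by the \(\varepsilon\)-neighbourhood inclusions \(X\subseteq U_d(Y,\varepsilon)\) and \(Y\subseteq U_d(X,\varepsilon)\). This condition is therefore an \(\mathcal L(M)\)-formula in \(y\). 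The direction "approximable \(\Rightarrow\) limit" needs care: choosing \(a\) along a coinitial \(\kappa\)-sequence of \(\varepsilon\)'s gives \(A_{a_\alpha}\xrightarrow{D}\psi(M,y)\) directly, so no Cauchy/compactness issue arises.

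The main obstacle is Step 1 together with the uniformity in Step 2: we must make sure that a single tame extension realizes all the representatives, and that stable embeddedness holds uniformly in the parameter \(c\) rather than only fiber by fiber. If the maximal-tame-extension route is delicate, I would instead invoke uniform stable embeddedness for the complete theory \(T_{\mathrm{tame}}\) (van den Dries–Lewenberg), applied in any tame pair, combined with the pointwise representation from Corollary~\ref{Hausdorff limits have tame external representatives}. That route only requires each \(B\) to be an instance of the finitely many \(\psi_j\), which holds because each \(E_c\) in any tame pair is one.
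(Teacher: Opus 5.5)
Your proof works, but only along the fallback you sketch at the end; Step~1 as written should be discarded. An $|M|^+$-saturated tame pair with $U$ interpreted as $M$ cannot exist, because it would have to realize $\{Ux\}\cup\{x\neq m:m\in M\}$ inside $M$. Nothing in the paper or in your sketch justifies the other option either, namely that every $\dcl_{\mathcal N}(M,c^*)$ embeds over $M$ into one fixed maximal tame extension. You do not need a common extension. Run the compactness argument in a saturated model of the complete theory $T_{\mathrm{tame}}$, treating the parameters of $A$ as $U$-variables. In that model, use Fact~\ref{Stable embeddedness of tame pairs} directly, as your Step~2 does, rather than Theorem~\ref{Hausdorff limits of Lipschitz cells are definable}: the dense-completeness hypothesis of that theorem is not available for the small model of a saturated pair. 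The resulting $\psi_1,\dots,\psi_k$ then work in every tame pair. In particular they work in $(\dcl_{\mathcal N}(M,c^*),M)$ for the representative $c^*$ of Corollary~\ref{Hausdorff limits have tame external representatives}, so every closed limit is an instance of the coded formula $\psi$.

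The genuine difference from the paper is your Step~3. The paper defines the index set inside one tame pair,
\[
C=\{d:(\mathcal N,M,\st)\models\exists x\in A'\,\forall y\,(Uy\rightarrow(\rho(x,y)\leftrightarrow\psi(y,d)))\},
\]
and applies stable embeddedness a second time. For this $C$ to capture every limit, each limit must have a tame representative in that same pair. The reverse inclusion uses Lemma~\ref{Tame external fibers give Hausdorff limits}. You instead cut out the index set by an intrinsic $\mathcal L(M)$-condition on $d$: $\psi(M,d)$ is closed, and for each $\varepsilon\in M^{>0}$ it is related to some fiber $A_a$ by the two $\varepsilon$-neighbourhood inclusions. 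By Lemma~\ref{Hausdorff closure by kappa sequences}, this says exactly that $\psi(M,d)$ is a closed Hausdorff limit of fibers. Your route therefore uses stable embeddedness only once and needs no single tame extension containing all representatives. It also makes Lemma~\ref{Tame external fibers give Hausdorff limits} unnecessary. In return, the paper's route gives an explicit description of the family as the standard parts of external fibers in one tame pair.
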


\begin{proof}
It is enough to work over the dense completion, so we assume first that \(\mathcal M\) is dense complete. Let \((\mathcal N,\mathcal M,\st)\) be a tame elementary pair, where \(\mathcal N\succeq\mathcal M\). Let \(U\) be the predicate for \(M\), and let \(\varphi(x,y)\) be an \(\mathcal L(\varnothing)\)-formula defining \(A\), where \(x\) is the parameter variable and \(y\) is the fiber variable.

In the tame-pair language, let \(\rho(x,y)\) be the formula
\[
Uy\wedge \exists v\left(\varphi(x,v)\wedge \st(v)=y\right).
\]
Thus, for \(c\in(A')^{\mathcal N}\), the formula \(\rho(c,y)\) defines \(\st_M(A_c^{\mathcal N})\) on \(M^n\).

As \(\psi(y,z)\) ranges over all \(\mathcal L(\varnothing)\)-formulas, consider the partial type \(\pi(x)\) consisting of \(x\in A'\), together with the formulas
\[
\forall z\left(Uz\rightarrow \exists y\left(Uy\wedge \neg\bigl(\psi(y,z)\leftrightarrow \rho(x,y)\bigr)\right)\right).
\]
This type is not realized in \(\mathcal N\). Indeed, if \(c\in(A')^{\mathcal N}\) realized it, then \(\st_M(A_c^{\mathcal N})\) would not be definable in \(\mathcal M\) by any \(\mathcal L\)-formula with parameters from \(M\). But \(A\) is bounded, so \(A_c^{\mathcal N}\) is bounded over \(M\), and Lemma~\ref{Tame external fibers give Hausdorff limits} implies that \(\st_M(A_c^{\mathcal N})\) is a Hausdorff limit of fibers from \(A\). By Theorem~\ref{Hausdorff limits of Lipschitz cells are definable}, this set is \(\mathcal L(M)\)-definable, a contradiction.

By compactness, there are finitely many \(\mathcal L(\varnothing)\)-formulas \(\psi_1(y,z_1),\ldots,\psi_r(y,z_r)\) such that every set of the form \(\st_M(A_c^{\mathcal N})\), with \(c\in(A')^{\mathcal N}\), is defined on \(M^n\) by one of these formulas with parameters from \(M\). By the standard coding argument, see \cite[Lemma~2.5]{Guingona2010}, we may replace this finite list by a single \(\mathcal L(\varnothing)\)-formula \(\psi(y,z)\). Hence, for every \(c\in(A')^{\mathcal N}\), there is \(d\in M^{|z|}\) such that
\[
\st_M(A_c^{\mathcal N})=\{y\in M^n:\mathcal M\models\psi(y,d)\}.
\]

Let \(C\subseteq M^{|z|}\) be the set of all \(d\) such that
\[
(\mathcal N,M,\st)\models
\exists x\in A'\ \forall y\left(Uy\rightarrow\bigl(\rho(x,y)\leftrightarrow\psi(y,d)\bigr)\right).
\]
The set \(C\) is definable in the tame-pair structure. Since \(C\subseteq M^{|z|}\), Fact~\ref{Stable embeddedness of tame pairs} implies that \(C\) is \(\mathcal L(M)\)-definable.

For every \(d\in C\), the fiber \(\{y\in M^n:\mathcal M\models\psi(y,d)\}\) is the standard part of a bounded tame external fiber and hence is a Hausdorff limit by Lemma~\ref{Tame external fibers give Hausdorff limits}. Conversely, Corollary~\ref{Hausdorff limits have tame external representatives} shows that every Hausdorff limit of fibers from \(A\) occurs in this way. Therefore the family
\[
\{\{y\in M^n:\mathcal M\models\psi(y,d)\}:d\in C\}
\]
is exactly the collection of Hausdorff limits of fibers from \(A\).

For a general model \(\mathcal M\), apply the dense-complete case to the interpretation of \(A\) in \(\widehat{\mathcal M}\). The same argument gives an \(\mathcal L(\widehat M)\)-definable parameter set \(C\) and an \(\mathcal L(\varnothing)\)-formula \(\psi(y,z)\) whose fibers are exactly the Hausdorff limits of fibers from \(A\). Hence the Hausdorff limits form an \(\mathcal L(\widehat M)\)-definable family in \(\widehat{\mathcal M}\).
\end{proof}


\bibliographystyle{alpha}
\bibliography{references}

\end{document}